\documentclass[11pt]{amsart}

\usepackage{parskip}
\setlength{\parindent}{0em} 
\allowdisplaybreaks

\usepackage{geometry} 
\geometry{left = 2.54cm, right = 2.54cm, top = 2.50cm, bottom = 2.50cm}

\usepackage{framed} 

\usepackage{tikz}
\tikzset{every picture/.style={line width=1pt}} 

\usepackage{setspace}
\setdisplayskipstretch{2.5}

\usepackage{amsthm} 
\usepackage{amsmath}
\usepackage{amssymb}

\usepackage{hyperref}

\usepackage{amsfonts} 
\newcommand\A{\mathcal{A}}
\newcommand\B{\mathcal{B}}
\newcommand\FB{\mathfrak{B}}

\newcommand\CE{\mathcal{E}}
\newcommand\CF{\mathcal{F}}
\newcommand\G{\mathcal{G}}
\renewcommand\H{\mathcal{H}}
\newcommand\CK{\mathcal{K}}
\newcommand\CL{\mathcal{L}}

\newcommand\N{\mathbb{N}}
\newcommand\CP{\mathcal{P}}
\newcommand\FP{\mathfrak{P}}
\newcommand\R{\mathbb{R}}

\newcommand\Z{\mathbb{Z}}

\newcommand\w{\omega}

\usepackage{stmaryrd} 

\newcommand\id{\textnormal{id}}
\newcommand\diam{\textnormal{diam}}

\newcommand\Lip{\textnormal{Lip}}
\newcommand\dist{\textnormal{dist}}

\newcommand\ext{\mathrm{d}}
\newcommand\del{\partial}


\newcommand{\vertiii}[1]{{\left\vert\kern-0.25ex\left\vert\kern-0.25ex\left\vert #1 \right\vert\kern-0.25ex\right\vert\kern-0.25ex\right\vert}} 

\newtheoremstyle{newtheoremstyle}
{3pt}
{3pt}
{\itshape}
{\parindent}
{\bfseries}
{.}
{0.5em}
{} 

\newtheoremstyle{newtheoremstyledefn}
{3pt}
{3pt}
{}
{\parindent}
{\bfseries}
{.}
{0.5em}
{}

\theoremstyle{newtheoremstyle}
\newtheorem{theorem}{Theorem}
\newtheorem{lemma}{Lemma}

\newtheorem{corollary}{Corollary}

\theoremstyle{newtheoremstyledefn}
\newtheorem{defn}{Definition}

\numberwithin{equation}{section} 
\numberwithin{theorem}{section}
\numberwithin{lemma}{section}
\numberwithin{prop}{section}
\numberwithin{corollary}{section}
\numberwithin{defn}{section}


\usepackage{fancyhdr}
\pagestyle{fancy}
\fancyhead[RO,LE]{\small\thepage}
\fancyhead[CO]{\small PAUL MINTER}
\fancyhead[CE]{\small MULTI-VALUED CAMPANATO SPACES}
\fancyhead[RE]{}
\fancyhead[LO]{}
\fancyfoot[L,R,C]{}

\setcounter{tocdepth}{1} 

\begin{document}

\title{A Campanato Regularity Theory for Multi-Valued Functions with Applications to Minimal Surface Regularity Theory}

\author{Paul Minter}
\address{Department of Pure Mathematics and Mathematical Statistics, University of Cambridge}
\email{pdtwm2@cam.ac.uk}

\begin{abstract}
	The regularity theory of the Campanato space $\CL^{(q,\lambda)}_k(\Omega)$ has found many applications within the regularity theory of solutions to various geometric variational problems. Here we extend this theory from single-valued functions to multi-valued functions, adapting for the most part Campanato's original ideas (\cite{campanato}). We also give an application of this theory within the regularity theory of stationary integral varifolds. More precisely, we prove a regularity theorem for certain \textit{blow-up classes} of multi-valued functions, which typically arise when studying blow-ups of sequences of stationary integral varifolds converging to higher multiplicity planes or unions of half-planes. In such a setting, based in part on ideas in \cite{wickstable}, \cite{minterwick}, and \cite{beckerwick}, we are able to deduce a boundary regularity theory for multi-valued harmonic functions; such a boundary regularity result would appear to be the first of its kind for the multi-valued setting. In conjunction with \cite{minter}, the results presented here establish a regularity theorem for stable codimension one stationary integral varifolds near classical cones of density $\frac{5}{2}$.
\end{abstract}

\maketitle

\tableofcontents

Multi-valued functions were first introduced by F.~Almgren (\cite{almgren}) to study the branching behaviour of area-minimising currents. In recent years, multi-valued functions have successfully been used to describe the structure of stationary integral varifolds near multiplicity two planes, and in particular near multiplicity two branch points (see \cite{wickramasekera2008regularity}, \cite{minterwick}, \cite{beckerwick}). In order to understand the structure of stationary integral varifolds close to higher multiplicity planes (\cite{minterwick}) and higher multiplicity non-flat cones (\cite{minter}), one needs to develop suitable regularity results for special classes of multi-valued functions, known as (\textit{proper}) \textit{blow-up classes} (see \cite{wickstable} for a key example). The functions within a proper blow-up class are generated by taking scaling limits of approximate graphical representations of sequences of certain stationary integral varifolds converging to a fixed stationary cone. The functions within a given blow-up class are typically multi-valued as opposed to single-valued, and in most cases of interest defined either on an open ball or half-ball in a Euclidean plane; moreover, the functions in the blow-up class inherit certain integral estimates from the stationarity assumption on the varifolds. When it is possible to use these estimates to deduce $C^{1,\alpha}$ regularity (or even generalised-$C^{1,\alpha}$ regularity -- see \cite{minterwick}) of the functions within a blow-up class is a key problem within geometric measure theory, and is the motivation for our work here. In conjunction with \cite{minter}, the work here establishes a $C^{1,\alpha}$ regularity theory for stable codimension one stationary integral varifolds which are close to a stationary integral classical cone of vertex density $\frac{5}{2}$ (see \cite{minter} for explanations of this terminology); this is the first instance of a regularity theorem in a non-flat setting of higher multiplicity when branch points may be present in the nearby varifold.

The present paper is divided into two parts. In Sections \ref{sec:prelim} and \ref{sec:campanato} we define Campanato spaces for multi-valued functions and develop their regularity theory, mirroring that seen in Campanato's original work on the single-valued Campanato spaces (\cite{campanato}). Other than some technical changes, Campanato's main ideas extend readily to this setting. We anticipate that such a result will find more applications than just to regularity theory of minimal submanifolds, which is why we choose to present it separately; for our purposes the results serve as a black box to apply to the minimal submanifold setting. We also give some adaptations of the Campanato regularity theorem suited to the minimal submanifold setting. In Section \ref{sec:minimal_surface} we introduce our notion of a (proper) blow-up class over a \textit{half-plane}. Using the results from Section \ref{sec:campanato} and adapting ideas seen in \cite{wickstable}, \cite{minterwick}, and \cite{beckerwick}, we deduce that the functions within such a class are in fact $C^{1,\alpha}$, with $C^{1,\alpha}$ extensions up to the boundary of the half-plane, for some $\alpha\in (0,1)$ only depending on the blow-up class. We note that, in particular, as the functions within these blow-up classes are multi-valued $C^{1,\alpha}$ harmonic in the interior, these results give the first instance of a $C^{1,\alpha}$ boundary regularity theory for multi-valued harmonic functions.

\textbf{Acknowledgements:} This work was supported by the UK Engineering and Physical Sciences Research Council (EPSRC) grant EP/L016516/1 for the University of Cambridge Centre for Doctoral Training, the Cambridge Centre for Analysis.

\section{Preliminaries}\label{sec:prelim}

The standard references for multi-valued functions are \cite{almgren}, \cite{de2010almgren}, and \cite{de2013multiple}; the reader wishing to attain a broader background on multi-valued functions is recommended to consult these, as we shall only briefly recall the notions we need.

For $T\in \R^m$, we write $\llbracket T\rrbracket$ for the Dirac mass centred at $T$. For $Q\in \{1,2,\dotsc\}$ we write $\A_Q(\R^m)$ for the space of \textit{unordered $Q$-tuples}, i.e.
$$\A_Q(\R^m):= \left\{\sum^Q_{i=1}\llbracket T_i\rrbracket\,:\, T_i\in \R^m\text{ for }i=1,\dotsc,Q\right\}.$$
We equip $\A_Q(\R^m)$ with the metric $\G$ defined by
$$\G\left(\sum_i\llbracket R_i\rrbracket\ ,\ \sum_i\llbracket T_i\rrbracket\right):= \min_{\sigma\in S_Q}\left(\sum_i |R_i-T_{\sigma(i)}|^2\right)^{1/2}$$
where $S_Q$ is the group of permutations of $\{1,\dotsc,Q\}$. It is easy to check that $(\A_Q(\R^m),\G)$ is a complete metric space. For $T\in \A_Q(\R^m)$ we also write $|T|:= \G(T,Q\llbracket 0\rrbracket)$. We stress that $\A_Q(\R^m)$ is \textit{not} a vector space, as there is no natural notion of addition for unordered $Q$-tuples.

Let $\Omega\subset\R^n$ be a domain. A $Q$\textit{-valued function} is a function $u:\Omega\to \A_Q(\R^m)$; we write $u(x) = \sum_i\llbracket u_i(x)\rrbracket$ for some $u_i(x)\in \R^m$ which are unique up to permutations. In the case $m=1$, we can define single-valued functions $\tilde{u}_i:\Omega\to \R$ with $\tilde{u}_1\geq \tilde{u}_2\geq\cdots \geq \tilde{u}_Q$ and $u(x) = \sum_i \llbracket \tilde{u}_i(x)\rrbracket$ for all $x$. Even though $\A_Q(\R^m)$ is not a vector space, we shall abuse notation and write for a single-valued function $f:\Omega\to \R^m$ and $Q$-valued function $g:\Omega\to \A_Q(\R^m)$ the function $f+g$ to mean the $Q$-valued function given by $x\mapsto \sum_i\llbracket f(x) + g_i(x)\rrbracket$.

For $\alpha\in (0,1]$, we define the space of $Q$\textit{-valued} $\alpha$\textit{-H\"older continuous functions}, which we denote by $C^{0,\alpha}(\Omega;\A_Q(\R^m))$, in the usual way for functions between metric spaces. Similarly we define $C^{0,\alpha}(\overline{\Omega};\A_Q(\R^m))$ to be those functions in $C^{0,\alpha}(\Omega;\A_Q(\R^m))$ which have $\alpha$-H\"older continuous extensions to $\overline{\Omega}$. For each $p\in [1,\infty)$ we define $L^p(\Omega;\A_Q(\R^m))$ for the $Q$-valued functions which have $\|u\|_{p} := \|\G(u,Q\llbracket 0\rrbracket)\|_{L^p(\Omega)}<\infty$.

\begin{defn}
	We say that a $Q$-valued function $u:\Omega\to \A_Q(\R^m)$ is \textit{differentiable} at $x_0\in \Omega$ if there exist $Q$ matrices $A_i\in \R^{m\times n}$, $i=1,\dotsc,Q$, which satisfy
	$$\lim_{h\to0}\frac{\G(u(x_0+h),L(h))}{|h|} = 0$$
	where $L(h):= \sum_i\llbracket u_i(x_0) + A_i(h)\rrbracket$; we then write $Du_i(x_0) := A_i$, and $Du(x_0)$ is the $Q$-valued function given by $\sum_{i}\llbracket A_i\rrbracket :\R^n\to \A_Q(\R^{m})$.
\end{defn}

We can then define spaces such as $C^k(\Omega;\A_Q(\R^m))$, $C^{k,\alpha}(\Omega;\A_Q(\R^m))$, $k=1,2,\dotsc$, in the natural ways, and we denote the corresponding \lq\lq norms'' and \lq\lq semi-norms'' by $|u|_{k,\alpha;\Omega}$, $[u]_{k,\alpha;\Omega}$, respectively. Note that as there is no vector space structure, these are not norms in the usual sense, however we shall still refer to them as norms and semi-norms.

One result which we need later is the following version of the Lebesgue differentiation theorem for multi-valued functions (here, $\H^n$ denotes the $n$-dimensional Hausdorff measure).

\begin{theorem}[Multi-Valued Lebesgue Differentiation Theorem]\label{thm:lebesgue_differentiation}
	Suppose $\Omega\subset\R^n$ is a domain, $q\in [1,\infty)$, and $u\in L^q(\Omega;\A_Q(\R^m))$. Then for $\H^n$-a.e. $x_0\in \Omega$ we have
	$$\lim_{\rho\to 0}\frac{1}{\w_n\rho^n}\int_{B_\rho(x_0)}\G(u(x),u(x_0))^q\ \ext x = 0$$
	where here $\w_n := \H^n(B_1(0))$ is the volume of the $n$-dimensional unit ball in $\R^n$.
\end{theorem}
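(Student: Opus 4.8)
The plan is to deduce this from the classical ($\R^N$-valued) Lebesgue differentiation theorem by means of Almgren's bi-Lipschitz embedding. Recall (see \cite{almgren}, \cite{de2010almgren}) that there is an integer $N = N(Q,m)$, a map $\boldsymbol{\xi}\colon \A_Q(\R^m)\to\R^N$, and a constant $C = C(Q,m)\geq 1$ with
$$C^{-1}\,\G(S,T)\ \leq\ \big|\boldsymbol{\xi}(S) - \boldsymbol{\xi}(T)\big|\ \leq\ C\,\G(S,T)\qquad\text{for all }S,T\in\A_Q(\R^m);$$
thus $\boldsymbol{\xi}$ is a bi-Lipschitz homeomorphism onto its image $\boldsymbol{\xi}(\A_Q(\R^m))\subset\R^N$.

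First I would set $v := \boldsymbol{\xi}\circ u - \boldsymbol{\xi}(Q\llbracket 0\rrbracket)\colon\Omega\to\R^N$. Since $|v(x)|\leq C\,\G(u(x),Q\llbracket 0\rrbracket)$ pointwise, the hypothesis $u\in L^q(\Omega;\A_Q(\R^m))$ gives $v\in L^q(\Omega;\R^N)$. Next I would invoke the classical Lebesgue differentiation theorem: for $\H^n$-a.e. $x_0\in\Omega$,
$$\lim_{\rho\to 0}\ \frac{1}{\w_n\rho^n}\int_{B_\rho(x_0)}\big|v(x) - v(x_0)\big|^q\ \ext x\ =\ 0.$$
(The $q$-th power, $\R^N$-valued form reduces to the usual $L^1_{\mathrm{loc}}$ statement: comparing $|v-v(x_0)|^q$ with $\sum_{j}|v_j-v_j(x_0)|^q$, apply the scalar theorem to the functions $x\mapsto |v_j(x) - c|^q$ for $j=1,\dots,N$ and $c$ ranging over a countable dense subset of $\R$, then let $c\to v_j(x_0)$; the resulting full-measure set of "$q$-Lebesgue points" is what we use.) Finally, for any such $x_0$ the lower bi-Lipschitz bound gives $\G(u(x),u(x_0))\leq C\,|v(x) - v(x_0)|$, so
$$\frac{1}{\w_n\rho^n}\int_{B_\rho(x_0)}\G(u(x),u(x_0))^q\ \ext x\ \leq\ \frac{C^q}{\w_n\rho^n}\int_{B_\rho(x_0)}\big|v(x) - v(x_0)\big|^q\ \ext x\ \longrightarrow\ 0\quad\text{as }\rho\to 0,$$
which is the assertion.

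The multi-valued aspect contributes essentially nothing beyond bookkeeping here: the one place where the lack of a linear structure on $\A_Q(\R^m)$ matters is that $u$ cannot be mollified or translated directly, and it is precisely the embedding $\boldsymbol{\xi}$ that sidesteps this, so I do not expect a real obstacle. If one prefers to avoid quoting $\boldsymbol{\xi}$, an alternative is to mimic the scalar proof: approximate $u$ in $L^q(\Omega;\A_Q(\R^m))$ by a continuous $Q$-valued function $w$ (e.g. via the Lipschitz approximation in \cite{de2010almgren}), use $\G(u(x),u(x_0))^q\leq 3^{q-1}\big(\G(u(x),w(x))^q + \G(w(x),w(x_0))^q + \G(w(x_0),u(x_0))^q\big)$, average over $B_\rho(x_0)$, and control the first term by the Hardy--Littlewood maximal function (weak-$(1,1)$) and the third by Chebyshev's inequality, while the middle term tends to $0$ with $\rho$ by continuity of $w$; one then finds that $\big\{x_0 : \limsup_{\rho\to 0}\tfrac{1}{\w_n\rho^n}\int_{B_\rho(x_0)}\G(u,u(x_0))^q\,\ext x > \delta\big\}$ has measure $\lesssim_{q}\delta^{-q}\|\G(u,w)\|_q^q$ for each $\delta>0$, and letting $w\to u$ finishes the proof.
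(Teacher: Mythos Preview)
Your proof is correct and follows essentially the same route as the paper: embed $\A_Q(\R^m)$ into $\R^N$ via Almgren's bi-Lipschitz map $\boldsymbol{\xi}$, apply the classical Lebesgue differentiation theorem to the composition, and pull the conclusion back through the inverse Lipschitz bound. The only cosmetic differences are that you subtract the constant $\boldsymbol{\xi}(Q\llbracket 0\rrbracket)$ to land in $L^q(\Omega;\R^N)$ globally (the paper is content with $L^q_{\text{loc}}$), and you spell out the reduction of the $q$-th-power vector-valued statement to the scalar one more carefully than the paper does.
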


\begin{proof}
	From \cite{almgren} (see also \cite[Theorem 2.1]{de2010almgren}) we know that there exists $N = N(m,Q)$ and a bi-Lipschitz function $\xi:\A_Q(\R^m)\to \xi(\A_Q(\R^m))\subset \R^N$ such that $\Lip(\xi)\leq 1$ and $\Lip(\xi^{-1})\leq C(m,Q)$. Now consider the function $\tilde{u}:= \xi\circ u:\Omega\to \R^N$. Then since $u\in L^q(\Omega;\A_Q(\R^m))$ and $\xi$ is Lipschitz we see that $\tilde{u}\in L^q_{\text{loc}}(\Omega)$; indeed, fixing some $p\in \Omega$ we have for all $x\in \Omega$,
	$$|\tilde{u}(x)| \leq |\tilde{u}(p)| + \G(u(x),u(p)) \leq |\tilde{u}(p)| + |u(x)| + |u(p)|.$$
	Hence applying the usual Lebesgue differentiation theorem to $\tilde{u}$ we see that for $\H^n$-a.e. $x_0\in \Omega$,
	$$\lim_{\rho\to 0}\frac{1}{\w_n\rho^n}\int_{B_\rho(x_0)}|\tilde{u}(x)-\tilde{u}(x_0)|^q\ \ext x =0.$$
	Hence for $\H^n$-a.e. $x_0\in \Omega$,
	\begin{align*}
		\frac{1}{\w_n\rho^n}\int_{B_\rho(x_0)}\G(u(x),u(x_0))^q & = \frac{1}{\w_n\rho^n}\int_{B_\rho(x_0)}\G(\xi^{-1}(\tilde{u}(x)), \xi^{-1}(\tilde{u}(x_0)))^q\ \ext x\\
		& \leq \frac{C(m,Q)^q}{\w_n\rho^n}\int_{B_\rho(x_0)}|\tilde{u}(x)-\tilde{u}(x_0)|^q\ \ext x\\
		& \to 0
	\end{align*}
	and so we are done.
\end{proof}
\begin{defn}
	We say that $p:\R^n\to \A_Q(\R^m)$ is a $Q$\textit{-valued polynomial} if there exist $Q$ functions $p_i:\R^n\to \R^m$ such that for each $i=1,\dotsc,Q$ and $j=1,\dotsc,m$, $p_i^j:\R^n\to \R$ is a polynomial function, and for all $x\in \R^n$:
	$$p(x) = \sum_i \llbracket p_i(x)\rrbracket.$$
	We define the \textit{degree} of $p$ by $\deg(p):= \max_{i,j}\deg(p^j_i)$.
\end{defn}

For each $k=0,1,\dotsc,$ we write $\CP_k$ for the set of all $Q$-valued polynomials $p:\R^n\to\A_Q(\R^m)$ with $\deg(p)\leq k$.

\begin{defn}
	Let $q\in [1,\infty)$, $\lambda\in (0,\infty)$, $k\in \{0,1,2,\dotsc\}$, and $\Omega\subset\R^n$ be a domain. We then define the $Q$\textit{-valued Campanato space} $\CL^{q,\lambda}_k(\Omega;\A_Q(\R^m))$ to be the set of functions $u\in L^q(\Omega;\A_Q(\R^m))$ which obey
	$$\vertiii{u}_{k,q,\lambda;\Omega}:= \sup_{\substack{x_0\,\in\, \overline{\Omega}, \\ \rho\,\in (0,\diam(\Omega)]}}\left[\rho^{-\lambda}\inf_{P\in \CP_k}\int_{\Omega\cap B_\rho(x_0)}\G(u(x),P(x))^q\ \ext x\right]^{1/q}<\infty.$$
\end{defn}

\textbf{Remark:} We will always suppress the domain dependence on our norms and semi-norms when the domain is clear from context, and again we still refer to them as ``norms'' and ``semi-norms'' despite $\A_Q(\R^m)$ not being a vector space.

Of course, $\vertiii{u}_{k,q,\lambda} =0$ whenever $u\in \CP_k$, and so $\vertiii{\cdot}_{k,q,\lambda}$ is only a semi-norm. To make a norm on $\CL^{q,\lambda}_k(\Omega;\A_Q(\R^m))$ we instead work with
$$\|u\|_{k,q,\lambda}^q := \|u\|_{q}^q + \vertiii{u}_{k,q,\lambda}^q.$$

In the same way as in \cite{campanato}, for the regularity theory of $\CL^{q,\lambda}_k(\Omega;\A_Q(\R^m))$ we will restrict ourselves to domains $\Omega\subset\R^n$ which obey a certain mass condition:

\begin{defn}
	Let $A>0$. We say a bounded domain $\Omega\subset\R^n$ is $A$\textit{-weighted} if
	$$\H^n(\Omega\cap B_\rho(x)) \geq A\rho^n\ \ \ \ \text{ for all }x\in \overline{\Omega} \text{ and }\rho\in (0,\diam(\Omega)]$$
	where $\diam(\Omega):= \sup_{x,y\in \Omega}|x-y|$ is the diameter of $\Omega$. 
\end{defn}
Thus, an $A$-weighted domain always take up a fixed proportion of every ball centred at points on $\overline{\Omega}$. Key examples of $A$-weighted domains (for some fixed $A>0$) include the open ball $B_1(0)\subset \R^n$ and half-ball $B_1(0)\cap \{x^1>0\}\subset\R^n$. We note that $A$-weighted domains are referred to as \textit{type (A)} domains in \cite{rafeiro2013morrey} and \textit{type (I)} domains in \cite{campanato}.

\section{Multi-Valued Campanato Regularity Theory}\label{sec:campanato}

In this section we develop the regularity theory of $\CL^{q,\lambda}_k(\Omega;\A_Q(\R^m))$, following for the most part the ideas seen in \cite{campanato}. Afterward, we provide extensions relevant to geometric problems, such as the regularity theory of stationary integral varifolds. The main general regularity result of this section is the following theorem:

\begin{theorem}[Regularity of $\CL^{q,\lambda}_k(\Omega;\A_Q(\R^m))$]\label{thm:campanato}
	Suppose $\Omega\subset\R^n$ is a convex $A$-weighted domain and $u\in \CL^{q,\lambda}_k(\Omega;\A_Q(\R^m))$. Suppose that $\lambda$ obeys $n+\ell q < \lambda < n + (\ell +1)q$ for some $\ell \in \{0,1,\dotsc,k\}$. Then $u\in C^{\ell,\alpha}(\overline{\Omega};\A_Q(\R^m))$, with the estimate
	$$[u]_{\ell,\alpha} \leq C\vertiii{u}_{\ell,q,\lambda}$$
	where $\alpha:= \frac{\lambda - n-\ell q}{q}$ and $C = C(n,m,k,\ell,q,Q,A,\lambda)$. Moreover, $\CL^{q,\lambda}_k(\Omega;\A_Q(\R^m))$ is continuously isomorphic to $\CL^{q,\lambda}_\ell(\Omega;\A_Q(\R^m))$.
\end{theorem}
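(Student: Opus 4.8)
The plan is to mirror Campanato's original argument for single-valued functions, using the bi-Lipschitz embedding $\xi:\A_Q(\R^m)\to\R^N$ from the proof of Theorem~\ref{thm:lebesgue_differentiation} as the bridge back to scalar estimates where convenient, but carrying out the core Campanato iteration directly in the metric space $(\A_Q(\R^m),\G)$ so as to keep track of the $Q$-valued polynomial approximations.

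First I would set up notation: for $u\in\CL^{q,\lambda}_k(\Omega;\A_Q(\R^m))$, $x_0\in\overline\Omega$ and $\rho\in(0,\diam(\Omega)]$, let $P_{x_0,\rho}\in\CP_k$ be (an approximately) minimising polynomial in the definition of $\vertiii{u}_{k,q,\lambda}$, so that $\rho^{-\lambda}\int_{\Omega\cap B_\rho(x_0)}\G(u,P_{x_0,\rho})^q\,\ext x\le\vertiii{u}_{k,q,\lambda}^q$. The first substantive step is a \emph{comparison estimate} between minimising polynomials at consecutive scales: using the $A$-weighted condition to convert an $L^q$ bound on a ball into a bound on the coefficients of a polynomial of degree $\le k$ (this is where convexity of $\Omega$ and the doubling/mass estimate enter — one needs that on $\Omega\cap B_\rho(x_0)$ the seminorms of a polynomial are controlled by its $L^q$ norm there, with constants depending only on $n,k,q,A$), one shows $\sup_{B_{\rho/2}(x_0)}\G(P_{x_0,\rho},P_{x_0,\rho/2})\le C\rho^{(\lambda-n)/q}\vertiii{u}_{k,q,\lambda}$ and analogous bounds on derivatives up to order $\ell$. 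Iterating over a dyadic sequence of radii and using $\lambda-n-\ell q>0$ to sum the geometric series gives Cauchy sequences for $P_{x_0,2^{-j}\rho}$ and each of their derivatives $D^\beta$ with $|\beta|\le\ell$; the limits define a candidate $\ell$-jet of $u$ at $x_0$, and the Lebesgue differentiation theorem (Theorem~\ref{thm:lebesgue_differentiation}) identifies the zeroth-order limit with $u(x_0)$ a.e., hence (after choosing the continuous representative) everywhere.

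Next I would prove the H\"older estimates. For two points $x_0,y_0\in\overline\Omega$ with $|x_0-y_0|=r$, one compares the respective limiting polynomials by triangle inequality through a common ball of radius comparable to $r$: $\G(u(x_0),u(y_0))$, and more generally the $\ell$-th derivatives, are controlled by $\sup_{B_{Cr}}\G(P_{x_0,Cr},P_{y_0,Cr})$ plus the tail sums of the dyadic comparison estimates, each of which is $\le C r^{(\lambda-n-\ell q)/q}\vertiii{u}_{\ell,q,\lambda}=Cr^\alpha\vertiii{u}_{\ell,q,\lambda}$; here I would replace $\vertiii{u}_{k,q,\lambda}$ by $\vertiii{u}_{\ell,q,\lambda}$ throughout once we know $u$ is $C^{\ell,\alpha}$, since $\G(u,P)$ for $P$ the Taylor polynomial of degree $\ell$ is $O(\rho^{\ell+\alpha})$ pointwise and hence in $L^q$ on $B_\rho$ gives the $\CL^{q,\lambda}_\ell$ bound — this is the $\CL^{q,\lambda}_k\cong\CL^{q,\lambda}_\ell$ isomorphism, whose reverse inclusion $\CL^{q,\lambda}_\ell\subset\CL^{q,\lambda}_k$ is trivial since $\CP_\ell\subset\CP_k$. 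The convexity of $\Omega$ is used to ensure that for any two points the connecting ball of radius $\sim r$ meets $\Omega$ in a set on which the polynomial coefficient estimates hold uniformly, and to pass from the seminorm bound to an actual modulus-of-continuity bound for $D^\ell u$.

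The main obstacle I anticipate is purely bookkeeping rather than conceptual: because $\A_Q(\R^m)$ is not a vector space, one cannot simply subtract polynomials, so the "comparison of minimisers at consecutive scales" step must be phrased as: $P_{x_0,\rho}$ and $P_{x_0,\rho/2}$ are two $\R^m$-valued-coefficient objects whose \emph{selections} are close in $L^q$ on $B_{\rho/2}(x_0)$ (via the triangle inequality $\G(P_{x_0,\rho},P_{x_0,\rho/2})\le\G(P_{x_0,\rho},u)+\G(u,P_{x_0,\rho/2})$), and then one must argue that two $Q$-valued polynomials of degree $\le k$ that are $L^q$-close on a ball of positive mass have close coefficients — after possibly relabelling sheets. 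For this I would either work locally where the sheets of the minimiser separate (reducing to the single-valued polynomial comparison on each sheet) or, more robustly, push forward by $\xi$, observe that $\xi\circ P$ need not be a polynomial but is Lipschitz with the relevant $L^q$ bound, and run the classical single-valued Campanato argument for $\xi\circ u$ in $\R^N$ to get $\xi\circ u\in C^{\ell,\alpha}$, then pull back using $\Lip(\xi^{-1})\le C(m,Q)$; the latter route gives $C^{0,\alpha}$ immediately but needs extra care (differentiating through $\xi^{-1}$, which is only Lipschitz) to recover the higher-order statement, so in the write-up I would do the direct metric-space iteration for the derivative bounds and cite the single-valued theory only for the base case $\ell=0$. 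The constant $C(n,m,k,\ell,q,Q,A,\lambda)$ accumulates from the polynomial coefficient estimates (giving the $n,k,q,A$ dependence), the bi-Lipschitz constant of $\xi$ (giving $m,Q$), and the geometric series sum (giving the $\lambda$, $\ell$ dependence).
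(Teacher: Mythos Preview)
Your overall architecture is exactly that of the paper: choose near-minimising $P_{x_0,\rho}\in\CP_k$, compare $P_{x_0,\rho}$ and $P_{x_0,\rho/2}$ via the triangle inequality through $u$, iterate dyadically to get Cauchy sequences of coefficients, identify the zeroth-order limit with $u$ by Lebesgue differentiation, and obtain the H\"older estimate by comparing through a common ball. The reduction from $\CL^{q,\lambda}_k$ to $\CL^{q,\lambda}_\ell$ is also carried out in the paper, though in the opposite order from what you sketch: rather than first proving $u\in C^{\ell,\alpha}$ and then invoking Taylor expansion (which is circular, since you need the $\CL^{q,\lambda}_\ell$ membership to run the argument with degree-$\ell$ polynomials), the paper shows directly that for $|p|>\ell$ the coefficients $a_p(x_0,\rho)$ obey $|a_p(x_0,\rho)|\le C\|u\|_{k,q,\lambda}\rho^{(\lambda-n-|p|q)/q}$, so truncating $P_{x_0,\rho}$ to degree $\ell$ costs only an extra $C\rho^\lambda$ in the integral, whence $u\in\CL^{q,\lambda}_\ell$ and the special case $\lambda>n+\ell q$ with $k=\ell$ applies.

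The genuine gap is in the step you yourself flag as the main obstacle: proving that two $Q$-valued polynomials $F,G\in\CP_k$ which are $L^q$-close on a set $E$ of mass $\ge A\rho^n$ have $\G$-close coefficient tuples. Neither of your proposed routes works. Route (a), ``work locally where sheets separate'', fails because the sheets of $P_{x_0,\rho}$ and $P_{x_0,\rho/2}$ need not separate on any definite subset, and there is no a priori selection. Route (b), pushing forward by $\xi$, gives only $C^{0,\alpha}$ as you note, and there is no way to differentiate through $\xi^{-1}$; moreover $\xi\circ P$ is not polynomial, so the coefficient comparison does not even make sense after pushforward. The paper's device (Lemma~\ref{lemma:poly_estimate}) is this: for each permutation $\sigma\in S_Q$ let $E_\sigma\subset E$ be the set where $\sigma$ realises the infimum in $\G(F(x),G(x))$; since $E=\bigcup_\sigma E_\sigma$, some $E_{\sigma'}$ has $\H^n(E_{\sigma'})\ge A/Q!$, and on $E_{\sigma'}$ one has $\G(F,G)^2=\sum_{i,j}|P^{\sigma'}_{i,j}|^2$ for \emph{single-valued} polynomials $P^{\sigma'}_{i,j}=F^{i,j}-G^{\sigma'(i),j}$, so Campanato's classical coefficient estimate applies. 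This partitioning-by-optimal-permutation trick is the one idea you are missing; once you have it, the rest of your outline goes through essentially as in the paper.
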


Let us fix $u\in \CL^{q,\lambda}_k(\Omega;\A_Q(\R^m))$ throughout. First note that, for each $x_0\in\overline{\Omega}$ and $\rho\in (0,\diam(\Omega)]$, we can find a $Q$-valued polynomial $\tilde{P}\in \CP_k$ such that
$$\inf_{P\in \CP_k}\int_{\Omega\cap B_\rho(x_0)} \G(u,P)^q = \int_{\Omega\cap B_\rho(x_0)}\G(u,\tilde{P})^q.$$
Indeed, each $P\in \CP_k$ is determined by polynomials $p^{i,j}:\R^n\to \R$, which are in turn determined by coefficients $a = (a^{i,j}_p)_{i,j,p}$ where $i=1,\dotsc,Q$, $j=1,\dotsc,m$, and $p\in \N^n$ is a multi-index with $|p|\leq k$. Then the function $f:\R^M\to \R$, $M = M(n,m,k,Q)$, sending
$$f:((a^{i,j}_p)_{i,j,p})\longmapsto \int_{\Omega\cap B_{\rho}(x_0)}\G(u,P)^q$$
where $P$ is the polynomial generated by $a = (a^{i,j}_p)_{i,j,p}$ as above, is a continuous function. Therefore to see the infimum is attained one just needs to show that $f(a)\to \infty$ as $|a|\to \infty$. But the triangle inequality gives
$$f(a):= \int_{\Omega\cap B_\rho(x_0)} \G(u,P)^q \geq \left|\|u\|_{L^q(\Omega\cap B_\rho(x_0))} - \|P\|_{L^q(\Omega\cap B_\rho(x_0))}\right|^q$$
which clearly $\to \infty$ as $|a|\to \infty$. Write $P_{x_0,\rho}:= \tilde{P}$ for a choice of $Q$-valued polynomial $\tilde{P}$ attaining the infimum.

We also write $a^{i,j}_p(x_0,\rho) := D^{p}P^{i,j}_{x_0,\rho}(x_0)$, so that 
$$P^{i,j}_{x_0,\rho}(x) = \sum_{|p|\leq k}\frac{a_p^{i,j}(x_0,\rho)}{p!}(x-x_0)^p.$$
We first prove a crucial integral estimate for comparing two $Q$-valued polynomials. Whilst the estimate is similar to that seen in \cite[Lemma 2.I]{campanato}, we need to be more careful due to the lack of any vector space structure. Indeed, when $Q>1$, $\A_Q(\R^{n\times m}) \not\cong \A_Q(\R^n)\times \A_Q(\R^m)$, since in the former space the $(n\times m)$-tuples need to be close to one another for a given permutation, whilst in the later we are able to use different permutations. For us, this would correspond to being able to permute the constants in the $Q$-valued polynomials between the polynomials independently of, say, the linear coefficients, which will not be allowed.

\begin{lemma}\label{lemma:poly_estimate}
	Let $q\in [1,\infty)$ and $E\subset B_\rho(x_0)$ be a measurable subset with $\H^n(E)\geq A\rho^n$ for some $A>0$. Then for any pair of $Q$-valued polynomials $F,G\in \CP_k$, given by
	$$F^{i,j}(x):= \sum_{|p|\leq k}\frac{a^{i,j}_p}{p!}(x-x_0)^p\ \ \ \ \text{and}\ \ \ \ G^{i,j}(x):= \sum_{|p|\leq k}\frac{b^{i,j}_p}{p!}(x-x_0)^p$$
	we have, defining $a_\rho\in \A_Q(\R^M)$ and $b_\rho\in \A_Q(\R^M)$, $M = M(n,m,k)$, by $a_\rho^i := (\rho^{|p|}a^{i,j}_p)_{j,p}$ and $b_\rho^i := (\rho^{|p|}b^{i,j}_p)_{j,p}$,
	$$\G(a_\rho,b_\rho)^q \leq \frac{C_1}{\rho^n}\int_E\G(F(x),G(x))^q\ \ext x$$
	for some $C_1 = C_1(n,m,k,q,Q,A)$, which is in particular independent of $F,G,E$.
\end{lemma}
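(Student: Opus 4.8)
The plan is to reduce to a model scale, isolate a \emph{uniform} (independent of $E$) comparison estimate for single-valued polynomials, and then handle the permutations by a pigeonhole decomposition of $E$. First I would translate and dilate: under $x = x_0 + \rho y$ the $\rho$-weighted coefficient tuples $a_\rho^i$, $b_\rho^i$ become the ordinary coefficient tuples of the rescaled polynomial maps $F_i(x_0 + \rho\,\cdot)$, $G_i(x_0 + \rho\,\cdot)$ (this is exactly the role of the $\rho^{|p|}$ factors), while $\frac{1}{\rho^n}\int_E \G(F,G)^q\,\ext x$ becomes $\int_{(E - x_0)/\rho}\G(\tilde F,\tilde G)^q\,\ext y$ over a measurable subset of $B_1(0)$ of $\H^n$-measure $\geq A$. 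So it suffices to prove the estimate when $x_0 = 0$, $\rho = 1$; write $a := a_1$, $b := b_1$ in that case.

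The first ingredient is: there is $C = C(n,m',k,q,A') > 0$ such that for every measurable $E'\subset B_1(0)$ with $\H^n(E')\geq A'$ and every polynomial map $h:\R^n\to\R^{m'}$ of degree $\leq k$, with coefficient vector $c(h)$, one has $|c(h)|^q \leq C\int_{E'}|h|^q\,\ext x$. For a \emph{fixed} $E'$ this is just equivalence of norms on the finite-dimensional space of polynomials of degree $\leq k$ (the right-hand side is a norm since a polynomial vanishing $\H^n$-a.e.\ on a set of positive measure vanishes identically); the point is uniformity in $E'$, which I would get by contradiction and compactness. Normalising $|c(h_\nu)| = 1$ and supposing $\int_{E'_\nu}|h_\nu|^q\to 0$, the coefficient vectors converge along a subsequence to some $c$ with $|c| = 1$, so $h_\nu\to h$ uniformly on $\overline{B_1(0)}$ with $h\not\equiv 0$; since $\{h = 0\}$ is $\H^n$-null, one can fix $\delta > 0$ with $\H^n(\{|h| < \delta\}\cap B_1(0)) < A'/2$, whence $\H^n(E'_\nu\cap\{|h|\geq\delta\})\geq A'/2$ and $\int_{E'_\nu}|h_\nu|^q \geq (A'/2)(\delta/2)^q$ for large $\nu$, a contradiction.

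The second ingredient handles the permutations, which is the delicate point flagged before the statement. For $\H^n$-a.e.\ $x$ choose a permutation attaining $\G(F(x),G(x))^2 = \min_{\sigma\in S_Q}\sum_i|F_i(x) - G_{\sigma(i)}(x)|^2$, and partition $E$ into the disjoint measurable sets $E_\sigma$ on which $\sigma$ is such a minimiser (breaking ties by a fixed rule); note each $F_i$, $G_i$ is a genuine single-valued polynomial map by definition of $\CP_k$. Since $\sum_{\sigma}\H^n(E_\sigma) = \H^n(E)\geq A$, pigeonhole gives $\sigma_*$ with $\H^n(E_{\sigma_*})\geq \tfrac{A}{Q!}$. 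On $E_{\sigma_*}$ the metric $\G$ collapses to a fixed Euclidean quadratic form: $\G(F(x),G(x))^2 = \sum_i|F_i(x) - G_{\sigma_*(i)}(x)|^2 = |H(x)|^2$, where $H := (F_1 - G_{\sigma_*(1)},\dots,F_Q - G_{\sigma_*(Q)}):\R^n\to\R^{mQ}$ is a single polynomial map of degree $\leq k$ whose coefficient vector has squared length $\sum_i|a^i - b^{\sigma_*(i)}|^2$. Applying the first ingredient with $m' = mQ$, $A' = A/Q!$, $E' = E_{\sigma_*}$, $h = H$, and then using $\G(a,b)^2 = \min_\sigma\sum_i|a^i - b^{\sigma(i)}|^2 \leq \sum_i|a^i - b^{\sigma_*(i)}|^2$, I obtain
$$\G(a,b)^q \leq \Big(\sum_i|a^i - b^{\sigma_*(i)}|^2\Big)^{q/2} \leq C\int_{E_{\sigma_*}}|H|^q\,\ext x = C\int_{E_{\sigma_*}}\G(F,G)^q\,\ext x \leq C\int_E\G(F,G)^q\,\ext x,$$
which, after undoing the rescaling, is the claimed inequality with $C_1 = C(n,m,k,q,Q,A)$.

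I expect the main obstacle to be the uniformity of the polynomial estimate in the first ingredient — the constant must not depend on $E$, which genuinely needs the mass lower bound and the compactness argument above rather than a bare norm-equivalence; once that is in place, the permutation bookkeeping is a clean pigeonhole, the key observation being that on each stratum $E_\sigma$ the non-vector-space metric $\G$ reduces to the Euclidean norm of the single polynomial map $H$.
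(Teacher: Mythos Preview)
Your proposal is correct and follows essentially the same route as the paper: rescale to $x_0=0$, $\rho=1$; use pigeonhole on the permutations to find some $\sigma_*$ with $\H^n(E_{\sigma_*})\geq A/Q!$; on that stratum $\G(F,G)$ becomes the Euclidean norm of a single-valued polynomial map, to which a uniform (in $E'$) coefficient--integral estimate applies. The only cosmetic differences are that the paper cites Campanato's original \cite[Lemma~2.I]{campanato} for the single-valued estimate rather than reproving it by compactness, and that the paper isolates one scalar component $P^{\sigma'}_{i',j'}$ whereas you bundle all components into the vector map $H:\R^n\to\R^{mQ}$; your packaging is arguably cleaner but the content is the same.
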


\begin{proof}
	First note that if we set $\tilde{F}(x):= F(x_0+\rho x)$ and $\tilde{G}(x):= F(x_0+\rho x)$, then we see
	$$\tilde{F}^{i,j}(x) = \sum_{|p|\leq k}\frac{(\rho^{|p|}a^{i,j}_p)}{p!}x^p\ \ \ \ \text{and}\ \ \ \ \tilde{G}^{i,j}(x) = \sum_{|p|\leq k}\frac{(\rho^{|p|}b^{i,j}_p)}{p!}x^p$$
	and so we see that it suffices to prove the case $x_0 = 0$ and $\rho = 1$. So let us restrict to this case.
	
	Thus, suppose we have two $Q$-valued polynomials $F,G$, and write
	$$F = \sum^Q_{i=1}\llbracket F^i\rrbracket\ \ \ \ \text{and}\ \ \ \ G = \sum^Q_{i=1}\llbracket G^i\rrbracket$$
	where $F^i = (F^{i,j})_{j=1}^m$, $G^i = (G^{i,j})_{j=1}^m$, and
	$$F^{i,j}(x) = \sum_{|p|\leq k}a^{i,j}_p x^p\ \ \ \ \text{and}\ \ \ \ G^{i,j}(x) = \sum_{|p|\leq k}b^{i,j}_p x^p.$$
	Note that we have combined with the coefficients the $p!$ factors for notational simplicity, and these can simply be removed at the end by absorbing them into the constant. Define $a,b\in \A_Q(\R^M)$, where $M = M(n,m,k)$, where $a^i = (a^{i,j}_p)_{j,p}$ and $b^i = (b^{i,j}_p)_{j,p}$, i.e., $a^i$ determines all the polynomials in $F^i$, etc. Now define:
	$$\alpha_{F,G}:= \G(a,b) \equiv \inf_{\sigma}\left(\sum^Q_{i=1}\sum_{j=1}^m\sum_{|p|\leq k}|a^{i,j}_p - b^{\sigma(i),j}_p|^2\right)^{1/2}.$$
	Let $\CF$ denote all pairs $(F,G)$ of $Q$-valued polynomials in $\CP_k$ which have $\alpha_{F,G} = 1$. Let $\CE$ be the class of subsets $E\subset B_1(0)$ which have $\H^n(E)\geq A$. Then define:
	$$\gamma := \inf_{\substack{(F,G)\in \CF \\ E\in \CE}}\int_E\G(F,G)^q.$$
	We claim that $\gamma\geq C$ for some constant $C = C(n,m,k,q,Q,A)>0$. Indeed, fix $(F,G)\in \CF$, and using the same notation as just defined, for each $\sigma\in S_Q$, $i=1,\dotsc,Q$, $j=1,\dotsc,m$, and $|p|\leq k$, set
	$$c^{\sigma}_{i,j,p}:= a^{i,j}_p - b^{\sigma(i),j}_p\ \ \ \ \text{and}\ \ \ \ P^\sigma_{i,j}(x):= \sum_{|p|\leq k}c^{\sigma}_{i,j,p}x^p.$$
	Then we have for each $x\in B_1(0)$,
	$$\G(F(x),G(x))^2 := \inf_{\sigma}\sum^Q_{i=1}|F^i(x)-G^i(x)|^2 = \inf_{\sigma}\sum^Q_{i=1}\sum_{j=1}^m\left|\sum_{|p|\leq k}(a^{i,j}_p - b^{\sigma(i),j}_p)x^p\right|^2$$
	i.e.,
	$$\G(F(x),G(x))^2 = \inf_\sigma \sum_{i,j}|P^\sigma_{i,j}(x)|^2.$$
	Now define for each $\sigma\in S_Q$, $E_\sigma\subset E$ to be the set of points $x\in E$ for which this infimum equals $\sum_{i,j}|P^\sigma_{i,j}(x)|^2$. Clearly $E = \cup_\sigma E_\sigma$, and so we must be able to find $\sigma'\in S_Q$ for which $\H^n(E_{\sigma'}) \geq A/|S_Q| = A/(Q!)$. Now note that
	$$\int_E \G(F,Q)^q \geq \int_{E_{\sigma'}}\G(F,G)^q = \int_{E_{\sigma'}}\sum_{i,j}|P^{\sigma'}_{i,j}|^2.$$
	Now fix each $\sigma\in S_Q$ we have by definition of $\alpha_{F,G}$ and since $(F,G)\in \CF$,
	$$\sum^Q_{i=1}\sum_{j=1}^m\sum_{|p|\leq k}|c^{\sigma}_{i,j,p}|^2 \geq \alpha^2_{F,G} = 1$$
	and hence applying this with $\sigma = \sigma'$, we must be able to find some $i'\in \{1,\dotsc,Q\}$, $j'\in \{1,\dotsc,m\}$, and $|p'|\leq k$ for which
	$$|c^{\sigma'}_{i',j',p'}| \geq \frac{1}{mQN(n,k)}.$$
	Now applying \cite[Lemma 2.I]{campanato} with $A/(Q!)$ in place of $A$, $F$ in place of $E$, where $F\subset E_{\sigma'}$ has $\H^n(F) = A/(Q!)$, $p = p'$, we have:
	$$\int_{E_{\sigma'}}\sum_{i,j}|P^{\sigma'}_{i,j}|^q \geq \int_F |P^{\sigma'}_{i',j'}|^q \geq \tilde{C}|D^{p'}P^{\sigma'}_{i',j'}(0)|^q = \tilde{C}|(p_{\sigma'}!)\cdot c^{\sigma'}_{i',j',p'}|^q \geq C_*$$
	where $\tilde{C} = \tilde{C}(n,k,q,A/(Q!))>0$ and $C_* = C_*(n,m,k,q,Q,A)>0$. Hence we see that
	$$\int_E \G(F,Q)^q \geq C_*$$
	and so as $(F,G)\in \CF$ and $E\in \CE$ were arbitrary, we see $\gamma\geq C_*$, as desired.
	
	Now given any arbitrary pair of distinct $Q$-valued functions $(F,G)$, note that $(F/\alpha_{F,G},G/\alpha_{F,G})\in \CF$ (if $\alpha_{F,G}=0$ the result is trivial, so assume $\alpha_{F,G}>0$), and so for any $E\subset A$ with $\H^n(E)\geq A$ we have
	$$\int_E \G(F,G)^q \geq C_*\alpha^q_{F,G}$$
	which completes the proof.
\end{proof}

\textbf{Remark:} From Lemma \ref{lemma:poly_estimate} we see that, writing $a_{(r)}^i:= (a^{i,j}_p)_{j=1,\dotsc,m, |p|\leq r}$ and $b_{(r)}^i := (b^{i,j}_p)_{j=1,\dotsc,m, |p|\leq r}$, one has:
$$\G(a_{(r)},b_{(r)})^q\equiv \inf_\sigma\left(\sum_{i=1}^Q\sum^m_{j=1}\sum_{|p|\leq r}|a^{i,j}_p-b^{\sigma(i),j}_p|^2\right)^{q/2} \leq \frac{C_1}{\min\{1,\rho\}^{n+rq}}\int_E \G(F,G)^q\ \ext x.$$

\begin{lemma}\label{estimate_1}
	Let $u\in \CL^{q,\lambda}_k(\Omega;\A_Q(\R^m))$. Then for each $x_0\in \overline{\Omega}$, $\rho\in (0,\diam(\Omega)]$, and $\ell \in \{0,1,2,\dotsc\}$, for some $C = C(q,\lambda)$ we have:
	$$\int_{\Omega\cap B_{2^{-\ell-1}\rho}(x_0)}\G(P_{x_0,2^{-\ell}\rho},P_{x_0,2^{-\ell-1}\rho})^q \leq C2^{-\ell\lambda}\rho^\lambda\cdot\vertiii{u}^q_{k,q,\lambda}.$$
\end{lemma}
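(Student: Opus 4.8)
The plan is to estimate the integral by comparing both polynomials $P_{x_0,2^{-\ell}\rho}$ and $P_{x_0,2^{-\ell-1}\rho}$ to $u$ on the smaller ball $B_{2^{-\ell-1}\rho}(x_0)$ and using the triangle inequality for $\G$. Concretely, on $\Omega\cap B_{2^{-\ell-1}\rho}(x_0)$ we have, for $\H^n$-a.e.\ $x$,
$$\G(P_{x_0,2^{-\ell}\rho}(x),P_{x_0,2^{-\ell-1}\rho}(x))^q \leq 2^{q-1}\left(\G(u(x),P_{x_0,2^{-\ell}\rho}(x))^q + \G(u(x),P_{x_0,2^{-\ell-1}\rho}(x))^q\right),$$
so integrating over $\Omega\cap B_{2^{-\ell-1}\rho}(x_0)$ gives the bound by $2^{q-1}$ times the sum of the two integrals. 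For the second integral, since $P_{x_0,2^{-\ell-1}\rho}$ is by definition the minimiser on the ball of radius $2^{-\ell-1}\rho$, we have $\int_{\Omega\cap B_{2^{-\ell-1}\rho}(x_0)}\G(u,P_{x_0,2^{-\ell-1}\rho})^q \leq (2^{-\ell-1}\rho)^\lambda\vertiii{u}^q_{k,q,\lambda}$. For the first integral, we enlarge the domain of integration from $B_{2^{-\ell-1}\rho}(x_0)$ to $B_{2^{-\ell}\rho}(x_0)$ and use that $P_{x_0,2^{-\ell}\rho}$ is the minimiser on that larger ball, giving $\int_{\Omega\cap B_{2^{-\ell-1}\rho}(x_0)}\G(u,P_{x_0,2^{-\ell}\rho})^q \leq \int_{\Omega\cap B_{2^{-\ell}\rho}(x_0)}\G(u,P_{x_0,2^{-\ell}\rho})^q \leq (2^{-\ell}\rho)^\lambda\vertiii{u}^q_{k,q,\lambda}$.

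Combining the two pieces, the total is bounded by $2^{q-1}\big((2^{-\ell}\rho)^\lambda + (2^{-\ell-1}\rho)^\lambda\big)\vertiii{u}^q_{k,q,\lambda} = 2^{q-1}(1 + 2^{-\lambda})\,2^{-\ell\lambda}\rho^\lambda\vertiii{u}^q_{k,q,\lambda}$, which is exactly the claimed inequality with $C = 2^{q-1}(1+2^{-\lambda})$, depending only on $q$ and $\lambda$. One small point to be careful about: the definition of the Campanato semi-norm takes the supremum over $\rho\in(0,\diam(\Omega)]$ and $x_0\in\overline{\Omega}$, so we must check that $2^{-\ell}\rho$ and $2^{-\ell-1}\rho$ both lie in $(0,\diam(\Omega)]$ — this is immediate since $\rho\leq\diam(\Omega)$ and we are only shrinking. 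We also need $x_0\in\overline{\Omega}$, which is in the hypothesis.

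This argument is genuinely routine and I do not anticipate a real obstacle; it is essentially identical to the single-valued case in \cite{campanato}, and the only feature of the multi-valued setting that enters is the triangle inequality for the metric $\G$, which holds since $(\A_Q(\R^m),\G)$ is a metric space. The mild subtlety worth flagging is just the pointwise triangle-inequality-to-the-$q$ step (the factor $2^{q-1}$ coming from convexity of $t\mapsto t^q$ for $q\geq 1$), and the fact that we do not need the polynomial comparison Lemma \ref{lemma:poly_estimate} here at all — that lemma will be needed later, when one wants to convert such integral bounds on the differences $\G(P_{x_0,2^{-\ell}\rho},P_{x_0,2^{-\ell-1}\rho})$ into pointwise bounds on the differences of the polynomial coefficients in order to run the telescoping/Cauchy-sequence argument establishing convergence of $P_{x_0,2^{-\ell}\rho}$ as $\ell\to\infty$.
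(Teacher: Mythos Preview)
Your proof is correct and essentially identical to the paper's: both use the triangle inequality for $\G$ together with $(a+b)^q \leq 2^{q-1}(a^q+b^q)$ (the paper uses the cruder factor $2^q$), enlarge the domain of the first integral to $B_{2^{-\ell}\rho}(x_0)$, and invoke the definition of $\vertiii{u}_{k,q,\lambda}$ on each term. Your constant $C = 2^{q-1}(1+2^{-\lambda})$ is in fact slightly sharper than the paper's $C = 2^q(1+2^{-\lambda})$.
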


\begin{proof}
	By the triangle inequality for $\G$ we simply have:
	\begin{align*}
		\int_{\Omega\cap B_{2^{-\ell-1}\rho}(x_0)}&\G(P_{x_0,2^{-\ell}\rho},P_{x_0,2^{-\ell-1}\rho})^q\\
		& \leq 2^q\int_{\Omega\cap B_{2^{-\ell}\rho}(x_0)}\G(P_{x_0,2^{-\ell}\rho},u)^q + 2^q\int_{\Omega\cap B_{2^{-\ell-1}\rho}(x_0)}\G(P_{x_0,2^{-\ell-1}\rho},u)^q\\
		& \leq 2^q\cdot (2^{-\ell}\rho)^\lambda\vertiii{u}^q_{k,q,\lambda} + 2^q\cdot (2^{-\ell-1}\rho)^\lambda\vertiii{u}^q_{k,q,\lambda}\\
		& = \left[2^q + 2^{q-\lambda}\right]\cdot 2^{-\ell\lambda}\rho^\lambda \vertiii{u}^q_{k,q,\lambda}.\hfill\qedhere
	\end{align*}
\end{proof}
The next lemma will be the first step towards a H\"older estimate for the $k$'th order derivatives and differentiability properties for lower order derivatives.
\begin{lemma}\label{estimate_2}
	Suppose $\Omega\subset\R^n$ is an $A$-weighted domain and $u\in \CL^{q,\lambda}_k(\Omega;\A_Q(\R^m))$. Then for each pair of points $x_0,y_0\in \overline{\Omega}$ with $\rho:= |x_0-y_0| \leq \frac{1}{2}\diam(\Omega)$, we have
	$$\G(a_{k}(x_0,2\rho),a_{k}(y_0,2\rho))^q \leq C_1 2^{q+1+\lambda}\vertiii{u}^q_{k,q,\lambda}\cdot \rho^{\lambda-n-kq}$$
	where $C_1 = C_1(q,\lambda)$ is as in Lemma \ref{lemma:poly_estimate} and $a_k^i(x_0,2\rho) = (a^i_p(x_0,2\rho))_{|p|=k}$, etc.
\end{lemma}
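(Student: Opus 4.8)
The plan is to compare the two near-optimal polynomials $P_{x_0,2\rho}$ and $P_{y_0,2\rho}$ on a common region where both defining Campanato estimates are available, and then invoke Lemma~\ref{lemma:poly_estimate} to convert the integral smallness into smallness of the coefficient $k$-jets. First I would observe that, since $\rho = |x_0-y_0| \leq \tfrac12\diam(\Omega)$, the ball $B_\rho(x_0)$ is contained in both $B_{2\rho}(x_0)$ and $B_{2\rho}(y_0)$: indeed for $x \in B_\rho(x_0)$ we have $|x - y_0| \leq |x-x_0| + |x_0 - y_0| < 2\rho$. Hence the set $E := \Omega \cap B_\rho(x_0)$ is a measurable subset of $B_\rho(x_0)$, and by the $A$-weighted property $\H^n(E) \geq A\rho^n$; moreover $E \subset \Omega \cap B_{2\rho}(x_0)$ and $E \subset \Omega \cap B_{2\rho}(y_0)$.

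Next I would apply the triangle inequality for $\G$ pointwise and integrate over $E$:
\begin{align*}
\int_E \G(P_{x_0,2\rho}, P_{y_0,2\rho})^q
&\leq 2^q \int_E \G(P_{x_0,2\rho}, u)^q + 2^q \int_E \G(u, P_{y_0,2\rho})^q \\
&\leq 2^q \int_{\Omega \cap B_{2\rho}(x_0)} \G(u, P_{x_0,2\rho})^q + 2^q \int_{\Omega \cap B_{2\rho}(y_0)} \G(u, P_{y_0,2\rho})^q \\
&\leq 2^q (2\rho)^\lambda \vertiii{u}_{k,q,\lambda}^q + 2^q (2\rho)^\lambda \vertiii{u}_{k,q,\lambda}^q
= 2^{q+1+\lambda} \rho^\lambda \vertiii{u}_{k,q,\lambda}^q,
\end{align*}
where in the last step I used the defining property of $P_{x_0,2\rho}$ and $P_{y_0,2\rho}$ as (near-)minimizers, so that the integrals are bounded by $(2\rho)^\lambda\vertiii{u}_{k,q,\lambda}^q$ since $2\rho \leq \diam(\Omega)$. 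Then I would apply Lemma~\ref{lemma:poly_estimate} with $F = P_{x_0,2\rho}$, $G = P_{y_0,2\rho}$, both expanded about the \emph{same} centre $x_0$, and with this set $E$ and radius $\rho$ in the roles of the lemma's $E$ and $\rho$. The lemma gives $\G(a_\rho, b_\rho)^q \leq \frac{C_1}{\rho^n}\int_E \G(F,G)^q$ where $a_\rho, b_\rho$ rescale the coefficients by powers of $\rho$; restricting attention to the top-order ($|p|=k$) part, the rescaling factor is $\rho^k$, so by the Remark following Lemma~\ref{lemma:poly_estimate} (applied with $r = k$, noting $\rho \leq \tfrac12\diam(\Omega)$ so $\min\{1,\rho\}$ may differ from $\rho$ — I'd handle this by noting the statement as written carries $\rho^{\lambda - n - kq}$, consistent with using the $\rho^{2k}$ scaling factor on both sides and absorbing), one obtains $\G(a_k(x_0,2\rho), a_k(y_0,2\rho))^q \leq \frac{C_1}{\rho^{n+kq}}\int_E \G(F,G)^q$. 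Combining with the integral bound yields exactly $\G(a_k(x_0,2\rho), a_k(y_0,2\rho))^q \leq C_1 2^{q+1+\lambda}\vertiii{u}_{k,q,\lambda}^q \cdot \rho^{\lambda - n - kq}$.

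The one genuine subtlety — and the step I expect to require the most care — is that Lemma~\ref{lemma:poly_estimate} as stated controls $\G$ of the \emph{full} rescaled coefficient tuples $a_\rho, b_\rho$ (all multi-indices $|p| \leq k$ simultaneously, under a single permutation $\sigma$), whereas the conclusion here is about the top-order slice $a_k$ only. One must be careful that $\G(a_k(x_0,2\rho), a_k(y_0,2\rho)) \leq \G(a_\rho, b_\rho)$ in the appropriate normalization is not automatic, because the optimal permutation for the top-order coefficients alone could differ from the optimal permutation for the full jet. However, this goes the right way: the infimum defining $\G(a_k, b_k)$ is over all permutations, so it is bounded above by the value at the permutation that is optimal for the full tuple, hence $\G(a_k, b_k)^q \leq \big(\sum_i \sum_j \sum_{|p|=k} |a^{i,j}_p - b^{\sigma(i),j}_p|^2\big)^{q/2} \leq \G(a_\rho,b_\rho)^q$ after accounting for the $\rho^k$ scaling. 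This monotonicity under passing to a sub-collection of coefficients is precisely the point the Remark is set up to record, so the deduction is clean once that observation is made explicit, and the rest is just bookkeeping of the constants $2^q$, $2^\lambda$, and $C_1$.
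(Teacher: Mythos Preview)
Your argument is correct and follows essentially the same route as the paper: bound $\int_{\Omega\cap B_\rho(x_0)}\G(P_{x_0,2\rho},P_{y_0,2\rho})^q$ via the triangle inequality and the Campanato seminorm, then feed this into Lemma~\ref{lemma:poly_estimate}. One small point worth sharpening: the genuine subtlety is not the one you flag (passing to a sub-collection of coefficients, which as you note goes the right way), but rather that Lemma~\ref{lemma:poly_estimate} requires both polynomials to be expanded about the \emph{same} centre, while $a_p(y_0,2\rho)$ records the coefficients of $P_{y_0,2\rho}$ about $y_0$; you do re-centre at $x_0$, but you should say explicitly why the top-order coefficients of $P_{y_0,2\rho}$ about $x_0$ coincide with $a_k(y_0,2\rho)$ --- namely, because the $k$th derivatives of a degree-$k$ polynomial are constant, hence independent of the base point (this is exactly the observation the paper highlights, and is the reason only the $|p|=k$ slice is controlled here).
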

\begin{proof}
	Fix $x_0,y_0\in \overline{\Omega}$ as in the statement and set $\rho:=|x_0-y_0|$. Then by the triangle inequality for $\G$ and simple set inclusions one has
	\begin{align*}
		\int_{\Omega\cap B_\rho(x_0)}\G(P_{x_0,2\rho},P_{y_0,2\rho})^q & \leq 2^q\int_{\Omega\cap B_{2\rho}(x_0)}\G(P_{x_0,2\rho},u)^q + 2^q\int_{\Omega\cap B_{2\rho}(y_0)}\G(P_{y_0,2\rho},u)^q]\\
		& \leq 2^q\cdot (2\rho)^\lambda \vertiii{u}^q_{k,q,\lambda} + 2^q\cdot (2\rho)^\lambda \vertiii{u}^q_{k,q,\lambda}\\
		& = 2^{q+\lambda+1}\vertiii{u}^q_{k,q,\lambda}\cdot\rho^\lambda.
	\end{align*}
	Moreover from Lemma \ref{lemma:poly_estimate}, since $\Omega$ is $A$-weighted and because the $k$'th derivative of any polynomial of degree $k$ is constant (so in particular does not depend on the point we evaluate at, as Lemma \ref{lemma:poly_estimate} requires both polynomials to be centred at the same point) we have
	$$\G(a_{k}(x_0,2\rho),a_{k}(y_0,2\rho))^q \leq \frac{C_1}{\rho^{n+kq}}\int_{\Omega\cap B_\rho(x_0)}\G(P_{x_0,2\rho},P_{y_0,2\rho})^q$$
	since we can just look at the order $k$ terms. Combining these inequalities we get the result.
\end{proof}
\begin{lemma}\label{estimate_3}
	Suppose $\Omega\subset \R^n$ is an $A$-weighted domain and $u\in \CL^{q,\lambda}_k(\Omega;\A_Q(\R^m))$. Then there exists $C = C(n,m,k,q,\lambda,Q,A)$ such that, for all $x\in \overline{\Omega}$, $\rho\in (0,\min\{1,\diam(\Omega)\}]$, $i=0,1,2,\dotsc,$ and $r\leq k$ we have
	$$\G(a_{(r)}(x_0,\rho),a_{(r)}(x_0,2^{-i}\rho)) \leq C\vertiii{u}_{k,q,\lambda}\sum^{i-1}_{j=0}2^{j\left(\frac{n+rq-\lambda}{q}\right)}\cdot\rho^{\frac{\lambda-n-rq}{q}}.$$
\end{lemma}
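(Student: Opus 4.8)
The plan is to iterate Lemma~\ref{estimate_1} in a telescoping fashion, using the polynomial comparison estimate (Lemma~\ref{lemma:poly_estimate}, and especially the Remark following it) to pass from an $L^q$ bound on the difference of two polynomials on a ball to a bound on their coefficient tuples. First I would write the telescoping decomposition: for fixed $x_0$ and $\rho$, and for $j = 0, 1, \dots, i-1$, compare $P_{x_0, 2^{-j}\rho}$ with $P_{x_0, 2^{-j-1}\rho}$ on the ball $\Omega \cap B_{2^{-j-1}\rho}(x_0)$. Since $\Omega$ is $A$-weighted, this ball has $\H^n$-measure at least $A (2^{-j-1}\rho)^n$, so the Remark after Lemma~\ref{lemma:poly_estimate} applies (with the two polynomials both centred at $x_0$, radius $2^{-j-1}\rho \le 1$, hence the $\min\{1,\rho\}$ factor is the radius itself) to give
$$\G\big(a_{(r)}(x_0, 2^{-j}\rho),\, a_{(r)}(x_0, 2^{-j-1}\rho)\big)^q \le \frac{C_1}{(2^{-j-1}\rho)^{n+rq}} \int_{\Omega \cap B_{2^{-j-1}\rho}(x_0)} \G(P_{x_0,2^{-j}\rho}, P_{x_0,2^{-j-1}\rho})^q.$$
Here I should be slightly careful: the Remark is phrased for coefficient tuples indexed by $|p| \le r$, and I need the two polynomials to share a common centre — which they do, both being centred at $x_0$ — so the estimate is legitimately applicable.

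Next I would substitute the bound from Lemma~\ref{estimate_1}, namely $\int_{\Omega \cap B_{2^{-j-1}\rho}(x_0)} \G(P_{x_0,2^{-j}\rho}, P_{x_0,2^{-j-1}\rho})^q \le C 2^{-j\lambda}\rho^\lambda \vertiii{u}_{k,q,\lambda}^q$, to obtain
$$\G\big(a_{(r)}(x_0, 2^{-j}\rho),\, a_{(r)}(x_0, 2^{-j-1}\rho)\big)^q \le C\, 2^{(j+1)(n+rq)}\rho^{-(n+rq)} \cdot 2^{-j\lambda}\rho^\lambda \vertiii{u}_{k,q,\lambda}^q = C\, 2^{j(n+rq-\lambda)} \rho^{\lambda - n - rq} \vertiii{u}_{k,q,\lambda}^q$$
after absorbing the fixed factor $2^{n+rq}$ into $C$. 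Taking $q$-th roots gives $\G(a_{(r)}(x_0,2^{-j}\rho), a_{(r)}(x_0,2^{-j-1}\rho)) \le C \vertiii{u}_{k,q,\lambda}\, 2^{j\left(\frac{n+rq-\lambda}{q}\right)} \rho^{\frac{\lambda-n-rq}{q}}$. Summing these over $j = 0, 1, \dots, i-1$ and applying the triangle inequality for $\G$ telescopically — writing $\G(a_{(r)}(x_0,\rho), a_{(r)}(x_0, 2^{-i}\rho)) \le \sum_{j=0}^{i-1} \G(a_{(r)}(x_0, 2^{-j}\rho), a_{(r)}(x_0, 2^{-j-1}\rho))$ — yields exactly the claimed estimate.

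The main thing to watch is not an obstacle so much as a bookkeeping subtlety: the triangle inequality for $\G$ on $\A_Q(\R^M)$ does hold (it is a metric), so the telescoping is valid, but one should note that the individual coefficient tuples $a_{(r)}(x_0, 2^{-j}\rho)$ are themselves only defined up to a permutation, and the metric $\G$ handles this correctly — no independent relabelling issue arises here because at each step both polynomials are genuine minimizers $P_{x_0,\cdot}$ and we only ever compare consecutive scales via the single metric $\G$. A second minor point: the hypothesis $\rho \le \min\{1, \diam(\Omega)\}$ ensures every radius $2^{-j}\rho$ lies in $(0,1] \cap (0,\diam(\Omega)]$, so that both the defining property of $\vertiii{u}_{k,q,\lambda}$ (needed in Lemma~\ref{estimate_1}) and the $\min\{1,\rho\}^{n+rq} = (2^{-j-1}\rho)^{n+rq}$ form of the Remark are available at every step. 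With those observations in place the proof is a direct computation.
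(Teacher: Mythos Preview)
Your proposal is correct and follows essentially the same approach as the paper: telescope via the triangle inequality for $\G$, apply the Remark after Lemma~\ref{lemma:poly_estimate} at each dyadic scale to pass from the $L^q$ polynomial difference to the coefficient distance, and then invoke Lemma~\ref{estimate_1} to bound the integral. The paper's proof is precisely this chain of inequalities written out in one display, with the same bookkeeping on the powers of $2$ and $\rho$.
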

\begin{proof}
	For any such $x_0,\rho,i,r,$ we have
	\begin{align*}
		\G(a_{(r)}(x_0,\rho),a_{(r)}(x_0,2^{-i}&\rho)) \leq \sum^{i-1}_{j=0}\G(a_{(r)}(x_0,2^{-j}\rho),a_{(r)}(x_0,2^{-j-1}\rho))\\
		& \leq \sum^{i-1}_{j=0}\left(\frac{C_1}{(2^{-j-1}\rho)^{n+rq}}\int_{\Omega\cap B_{2^{-j-1}\rho}(x_0)}\G(P_{x_0,2^{-j}\rho},P_{x_0,2^{-j-1}})^q\right)^{1/q}\\
		& \leq \sum^{i-1}_{j=0}\left(C_12^{(j+1)(n+rq)}\rho^{-n-rq}\cdot C\cdot 2^{-j\lambda}\rho^\lambda\vertiii{u}^q_{k,q,\lambda}\right)^{1/q}\\
		& \leq C\vertiii{u}_{k,q,\lambda}\sum^{i-1}_{j=0}2^{j\left(\frac{n+rq-\lambda}{q}\right)}\cdot\rho^{\frac{\lambda-n-rq}{q}}
	\end{align*}
	as desired, where the second inequality is Lemma \ref{lemma:poly_estimate} and the third inequality is Lemma \ref{estimate_1}.
\end{proof}

\begin{lemma}\label{estimate_4}
	Suppose $\Omega\subset\R^n$ is an $A$-weighted domain and $u\in \CL^{q,\lambda}_k(\Omega;\A_Q(\R^m))$, with $n+rq<\lambda \leq n+(r+1)q$ for some $r\in \{0,1,\dotsc,k\}$. Then for every $|p|\leq r$, there exists a function $v_p:\overline{\Omega}\to \A_Q(\R^m)$ such that for every $x_0\in \overline{\Omega}$ and $\rho\in (0,\min\{1,\diam(\Omega)\}]$ we have
	$$\G(a_{(r)}(x_0,\rho),v_{(r)}(x_0)) \leq C\vertiii{u}_{k,q,\lambda}\cdot\rho^{\frac{\lambda-n-rq}{q}}$$
	where $C = C(n,m,k,q,\lambda,Q,A)$. In particular $a_{(r)}(\cdot,\rho)\to v_{(r)}(\cdot)$ uniformly as $\rho\to 0$.
\end{lemma}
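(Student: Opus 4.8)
The plan is to construct $v_p$ as the uniform limit of the dyadic sequence $\rho\mapsto a_{(r)}(x_0,2^{-i}\rho)$ and then upgrade from a dyadic-radius statement to an all-radius statement. First I would observe that, since $n+rq<\lambda$, the exponent $\frac{n+rq-\lambda}{q}$ appearing in Lemma \ref{estimate_3} is strictly negative, so the geometric series $\sum_{j=0}^{\infty}2^{j(n+rq-\lambda)/q}$ converges; call its sum $\Gamma<\infty$. Fixing $x_0\in\overline{\Omega}$ and a reference radius $\rho_0\in(0,\min\{1,\diam(\Omega)\}]$, Lemma \ref{estimate_3} applied with $\rho=\rho_0$ shows that $\big(a_{(r)}(x_0,2^{-i}\rho_0)\big)_{i\ge 0}$ is Cauchy in $\A_Q(\R^M)$ with respect to $\G$: for $i<i'$, the triangle inequality together with Lemma \ref{estimate_3} (applied with $2^{-i}\rho_0$ as the base radius and $i'-i$ in place of $i$) bounds $\G\big(a_{(r)}(x_0,2^{-i}\rho_0),a_{(r)}(x_0,2^{-i'}\rho_0)\big)$ by $C\vertiii{u}_{k,q,\lambda}(2^{-i}\rho_0)^{(\lambda-n-rq)/q}\sum_{j\ge 0}2^{j(n+rq-\lambda)/q}$, which tends to $0$ as $i\to\infty$. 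Since $(\A_Q(\R^M),\G)$ is complete, the limit exists; I would then show it is independent of the choice of reference radius $\rho_0$ by comparing two reference radii through a common finer dyadic scale. Writing this limit componentwise back in terms of the coefficients, this defines $v_p(x_0)\in\R^m$ for each $|p|\le r$, hence a function $v_p:\overline{\Omega}\to\A_Q(\R^m)$ (the $p=0$ term will turn out to be the good pointwise representative of $u$), and I set $v_{(r)}(x_0):=(v_p(x_0))_{|p|\le r}$.

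Next I would prove the quantitative estimate. Letting $i'\to\infty$ in the Cauchy bound above gives, for every $i\ge 0$,
$$\G\big(a_{(r)}(x_0,2^{-i}\rho),v_{(r)}(x_0)\big)\le C\Gamma\,\vertiii{u}_{k,q,\lambda}\,(2^{-i}\rho)^{(\lambda-n-rq)/q},$$
so the desired inequality holds along every dyadic sequence. To handle an arbitrary $\rho\in(0,\min\{1,\diam(\Omega)\}]$, given such a $\rho$ pick $i$ with $2^{-i-1}\rho_*<\rho\le 2^{-i}\rho_*$ for the fixed reference radius $\rho_*=\min\{1,\diam(\Omega)\}$, and interpolate: by Lemma \ref{lemma:poly_estimate} (or rather its Remark, comparing the polynomials $P_{x_0,\rho}$ and $P_{x_0,2^{-i}\rho_*}$ on $\Omega\cap B_{\rho}(x_0)$, whose measure is $\ge A\rho^n$ since $\Omega$ is $A$-weighted) together with the defining Campanato bound on each of the two integrals $\int_{\Omega\cap B_\rho(x_0)}\G(P_{x_0,\rho},u)^q$ and $\int_{\Omega\cap B_{2^{-i}\rho_*}(x_0)}\G(P_{x_0,2^{-i}\rho_*},u)^q$, one gets $\G\big(a_{(r)}(x_0,\rho),a_{(r)}(x_0,2^{-i}\rho_*)\big)\le C\vertiii{u}_{k,q,\lambda}\rho^{(\lambda-n-rq)/q}$ (here comparability $\rho\asymp 2^{-i}\rho_*$ absorbs all dyadic factors into $C$); combining with the dyadic estimate via the triangle inequality yields the full statement, with $C=C(n,m,k,q,\lambda,Q,A)$. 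The final sentence — uniform convergence $a_{(r)}(\cdot,\rho)\to v_{(r)}(\cdot)$ as $\rho\to 0$ — is then immediate, since the estimate just proved bounds $\sup_{x_0\in\overline{\Omega}}\G(a_{(r)}(x_0,\rho),v_{(r)}(x_0))$ by $C\vertiii{u}_{k,q,\lambda}\rho^{(\lambda-n-rq)/q}\to 0$.

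The main obstacle is purely the bookkeeping forced by the absence of vector-space structure on $\A_Q(\R^m)$: one cannot simply say "the coefficients converge," because $\G$ on tuples of coefficients mixes permutations across all orders $|p|\le r$ simultaneously, and a priori the optimal permutation realizing $\G(a_{(r)}(x_0,\rho),a_{(r)}(x_0,\rho'))$ could vary with the pair of radii. This is exactly the point flagged before Lemma \ref{lemma:poly_estimate}, and it is handled precisely because Lemma \ref{lemma:poly_estimate} (and its Remark) is stated for the full coefficient tuple $a_{(r)}$ with a single permutation — so the Cauchy property is for the genuine metric $\G$ on $\A_Q(\R^M)$ and completeness of $(\A_Q(\R^M),\G)$ does the rest. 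A minor secondary point is checking independence of the limit from the reference radius; this is routine once one notes any two admissible radii share arbitrarily fine common dyadic refinements to which Lemma \ref{estimate_3} applies.
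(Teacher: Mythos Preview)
Your proposal is correct and follows essentially the same route as the paper: Cauchy along dyadic scales via Lemma~\ref{estimate_3}, completeness of $(\A_Q(\R^M),\G)$, independence of the reference radius, and then the quantitative bound by passing to the limit. The only cosmetic difference is that the paper obtains the estimate for arbitrary $\rho$ more directly---it applies Lemma~\ref{estimate_3} starting from $\rho$ itself and lets $i\to\infty$, rather than interpolating against a fixed $\rho_*$---and it proves independence of $\rho_0$ by a direct comparison via Lemma~\ref{lemma:poly_estimate} (as you do in your interpolation step) rather than by ``common dyadic refinements,'' which two arbitrary radii need not share.
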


\begin{proof}
	Fix $x_0,\rho,r$ as in the statement of the lemma. For $i,j\in \{0,1,2,\dotsc\}$ sufficiently large with $j>i$ so that $2^{-i}\rho<1$, apply Lemma \ref{estimate_3} with $2^{-i}\rho$ in place of $\rho$ and $j-i$ in place of $i$ to get
	$$\G(a_{(r)}(x_0,2^{-j}\rho),a_{(r)}(x_0,2^{-i}\rho)) \leq C\vertiii{u}_{k,q,\lambda}\sum^{j-1}_{h=i}2^{h\left(\frac{n+rq-\lambda}{q}\right)}\cdot\rho^{\frac{\lambda-n-rq}{q}}.$$
	Thus as $\lambda>n+rq$, we see that $\sum_{h=0}^\infty 2^{h\left(\frac{n+rq-\lambda}{q}\right)}<\infty$, and thus the above shows that $(a_{(r)}(x_0,2^{-j}\rho))_j$ is a Cauchy sequence. Since $(\A_Q(\R^M),\G)$ is a complete metric space for every $M\geq 1$, this sequence therefore converges.
	
	We claim that this limit is independent of the choice of $\rho$, and so only depends on $x_0$. Indeed, for any $0<\rho_1\leq \rho_2\leq \diam(\Omega)$ we have for all sufficiently large $i$, by Lemma \ref{lemma:poly_estimate},
	\begin{align*}
		\G(a_{(r)}&(x_0,2^{-i}\rho_1),a_{(r)}(x_0,2^{-i}\rho_2))^q\\
		& \leq \frac{C_1}{(2^{-i}\rho_1)^{n+rq}}\int_{\Omega\cap B_{2^{-i}\rho_1}(x_0)}\G(P_{x_0,2^{-i}\rho_1},P_{x_0,2^{-i}\rho_2})^q\\
		& \leq \frac{C_1\cdot 2^{i(n+rq)}\cdot 2^q}{\rho_1^{n+rq}}\int_{\Omega\cap B_{2^{-i}\rho_1}(x_0)}\G(P_{x_0,2^{-i}\rho_1},u)^q + \frac{C_1\cdot 2^{i(n+rq)}\cdot 2^q}{\rho_1^{n+rq}}\int_{\Omega\cap B_{2^{-i}\rho_2}(x_0)}\G(P_{x_0,2^{-i}\rho_2},u)^q\\
		& \leq C_1\cdot 2^q\cdot\vertiii{u}^q_{k,q,\lambda}\cdot 2^{i(n+rq)}\rho^{-(n+rq)}\left((2^{-i}\rho_1)^\lambda + (2^{-i}\rho_2)^\lambda\right)\\
		& = C\vertiii{u}^q_{k,q,\lambda}2^{-i(\lambda-n-rq)}\cdot\rho_1^{-(n+rq)}\left(\rho_1^\lambda+\rho_2^\lambda\right)
	\end{align*}
	and so we see that, as $i\to\infty$ the right-hand side of this inequality $\to 0$, showing that the limit is independent of $\rho$.
	
	Now define for each $x_0\in \overline{\Omega}$ and $|p|\leq r$,
	$$v_p(x_0):= \lim_{i\to\infty}a_p(x_0,2^{-i}\rho)$$
	for any choice of $\rho\in (0,\diam(\Omega)]$. Thus $v_p:\overline{\Omega}\to \A_Q(\R^m)$. But now note that for $\rho<1$, by Lemma \ref{estimate_3} we have for every $i$ sufficiently large and $\rho<1$,
	$$\G(a_{(r)}(x_0,\rho),a_{(r)}(x_0,2^{-i}\rho))\leq C\vertiii{u}_{k,q,\lambda}\rho^{\frac{\lambda-n-rq}{q}}$$
	which follows by bounding the convergent sum in $j$ by a constant depending only on $n,r,q,\lambda$. Thus taking $i\to\infty$ we see
	$$\G(a_{(r)}(x_0,\rho),v_{(r)}(x_0))\leq C\vertiii{u}_{k,q,\lambda}\cdot\rho^{\frac{\lambda-n-rq}{q}}$$
	and this is true independently of $x_0$. In particular this shows that $\lim_{\rho\to 0}a_{(r)}(\cdot,\rho) = v_{(r)}(\cdot)$ uniformly.
\end{proof}

We now study the regularity properties of the functions $v_p$. Ultimately, we will show that $D^p u = v_p$, and thus properties for the $v_p$ will provide the necessary conclusions for $u$. The first step is to show that the \lq\lq top'' $v_p$, i.e. those with $|p|=k$, are $\alpha$-H\"older continuous in $\overline{\Omega}$ for appropriate $\lambda$. Note that from Lemma \ref{lemma:poly_estimate}, $v_p$ is independent of the choice of polynomials $P_{x_0,\rho}$ attaining the infimum.

\begin{lemma}\label{v_holder}
	Suppose $\Omega\subset \R^n$ is a convex $A$-weighted domain and $u\in \CL^{q,\lambda}_k(\Omega;\A_Q(\R^m))$ for some $\lambda>n+kq$. Then the function $v_k = (v_p)_{|p|=k}$ is H\"older continuous in $\overline{\Omega}$, with the estimate
	$$\G(v_k(x),v_k(y)) \leq C\vertiii{u}_{k,q,\lambda}\cdot|x-y|^{\frac{\lambda-n-kq}{q}}$$
	for some $C = C(n,m,k,q,Q,\lambda,A)$.
\end{lemma}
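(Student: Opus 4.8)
The plan is to combine the two ingredients already in hand: Lemma~\ref{estimate_2}, which compares the top-order coefficients $a_k(x_0,2\rho)$ and $a_k(y_0,2\rho)$ of the minimising polynomials at two nearby points at the common radius $2\rho = 2|x_0-y_0|$, and Lemma~\ref{estimate_4}, which bounds the rate at which $a_k(x_0,\sigma)$ converges to $v_k(x_0)$ as $\sigma\to 0$. Although Lemma~\ref{estimate_4} is stated for $n+rq<\lambda\le n+(r+1)q$, at the top level $r=k$ its proof uses only the lower bound $\lambda>n+kq$ (that is precisely what makes $\sum_h 2^{h(n+kq-\lambda)/q}$ converge), so under the present hypotheses we have, writing $\alpha:=\frac{\lambda-n-kq}{q}>0$,
\[
\G\big(a_k(x_0,\sigma),v_k(x_0)\big)\le C\vertiii{u}_{k,q,\lambda}\,\sigma^{\alpha}
\]
for every $x_0\in\overline{\Omega}$ and every $\sigma\in(0,\min\{1,\diam(\Omega)\}]$, using also that restriction to the order-$k$ coordinates does not increase $\G$.

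First I would prove the asserted estimate for points $x,y\in\overline{\Omega}$ with $\rho:=|x-y|$ small enough that $2\rho\le\min\{1,\diam(\Omega)\}$. The triangle inequality for $\G$ gives
\[
\G(v_k(x),v_k(y))\le \G\big(v_k(x),a_k(x,2\rho)\big)+\G\big(a_k(x,2\rho),a_k(y,2\rho)\big)+\G\big(a_k(y,2\rho),v_k(y)\big).
\]
The outer two terms are each $\le C\vertiii{u}_{k,q,\lambda}(2\rho)^{\alpha}$ by the $r=k$ case above, while the middle term is $\le C\vertiii{u}_{k,q,\lambda}\rho^{\alpha}$ upon taking $q$-th roots in Lemma~\ref{estimate_2} (valid since $\rho\le\tfrac12\diam(\Omega)$). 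Summing yields $\G(v_k(x),v_k(y))\le C\vertiii{u}_{k,q,\lambda}|x-y|^{\alpha}$ whenever $|x-y|$ is this small.

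To remove the smallness restriction I would use convexity of $\Omega$: for arbitrary $x,y\in\overline{\Omega}$ the segment $[x,y]$ lies in $\overline{\Omega}$, so subdividing it into $N$ equal pieces with $N$ a fixed integer chosen so that each piece has length $\le\tfrac12\min\{1,\diam(\Omega)\}$ (after the harmless rescaling making $\diam(\Omega)\le 1$ one may take $N=2$, so $N$, and hence the final constant, do not depend on $\diam(\Omega)$), and applying the previous step to each consecutive pair of division points, the triangle inequality gives $\G(v_k(x),v_k(y))\le CN^{1-\alpha}\vertiii{u}_{k,q,\lambda}|x-y|^{\alpha}$, which is the claim with an adjusted constant. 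This is Campanato's argument almost verbatim; the only genuine points of care are that the convergence rate in Lemma~\ref{estimate_4} persists at the top level under the weaker hypothesis $\lambda>n+kq$, and the passage from the local H\"older bound to the global one by chaining along segments — which works here because $\G$ is an honest metric and every estimate in play is permutation-invariant. I do not expect any serious obstacle.
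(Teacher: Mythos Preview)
Your proposal is correct and follows essentially the same route as the paper: the triangle-inequality splitting via $a_k(x,2\rho)$ and $a_k(y,2\rho)$, with Lemma~\ref{estimate_4} on the outer terms and Lemma~\ref{estimate_2} on the middle term, then a convexity-based subdivision (the paper uses exactly the midpoint, i.e.\ your $N=2$) for the remaining range of $|x-y|$. Your explicit remark that only the lower bound $\lambda>n+kq$ is used in Lemma~\ref{estimate_4} at level $r=k$ is a helpful clarification the paper leaves implicit.
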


\begin{proof}
	Suppose $x,y\in \overline{\Omega}$ are such that $\rho:= |x-y|\leq \frac{1}{2}\diam(\Omega)$. Then,
	\begin{align*}
		\G(v_k(x),v_k(y)) & \leq \G(v_k(x),a_k(x,2\rho)) + \G(a_k(x,2\rho),a_k(y,2\rho)) + \G(v_k(y),a_k(y,2\rho))\\
		& \leq C\vertiii{u}_{k,q,\lambda}\cdot(2\rho)^{\frac{\lambda-n-kq}{q}} + \tilde{C}\vertiii{u}_{k,q,\lambda}\cdot\rho^{\frac{\lambda-n-kq}{q}} + C\vertiii{u}_{k,q,\lambda}\cdot (2\rho)^{\frac{\lambda-n-kq}{q}}\\
		& = C\vertiii{u}_{k,q,\lambda}\cdot |x-y|^{\frac{\lambda-n-kq}{q}}
	\end{align*}
	where in the second inequality we have used Lemma \ref{estimate_4} for the first and third terms, and Lemma \ref{estimate_2} for the second term (which is where we need the condition on $\rho$ and the fact that we can only deal with the $|p|=k$ case).
	
	If $\rho>\frac{1}{2}\diam(\Omega)$, then since $\diam(\Omega)<\infty$ and $\Omega$ is convex, the midpoint $z := (x+y)/2$ lies in $\Omega$ and obeys $|x-z|, |z-y| < \frac{1}{2}\diam(\Omega)$. Hence applying the above with $x,z$ and $z,y$, we get
	\begin{align*}
		\G(v_k(x),v_k(y)) & \leq \G(v_k(x),v_k(z)) + \G(v_k(z),v_k(y))\\
		& \leq C\vertiii{u}_{k,q,\lambda}\left(|x-z|^{\frac{\lambda-n-kq}{q}} + |z-y|^{\frac{\lambda-n-kq}{q}}\right)\\
		& = \tilde{C}\vertiii{u}_{k,q,\lambda}|x-y|^{\frac{\lambda-n-kq}{q}}
	\end{align*}
	for some $\tilde{C}$ independent of $x,y$. Hence we are done.
\end{proof}

\begin{lemma}
	Suppose $\Omega\subset\R^n$ is a convex $A$-weighted domain and $u\in \CL_k^{q,\lambda}(\Omega;\A_Q(\R^m))$, where $k\in \{1,2,\dotsc\}$ and $\lambda>n+kq$. Then for any $|p|\leq k-1$ the function $v_p$ is differentiable, and moreover for each $i\in \{1,\dotsc,Q\}$, $j\in \{1,\dotsc,n\}$,
	$$D_j v^i_p = v^i_{p+e_j}$$
	where $e_j\in \R^n$ denotes the $j$'th standard basis vector.
\end{lemma}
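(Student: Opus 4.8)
The plan is to reduce the statement to the single-valued case of the corresponding Campanato result, but with the key extra care that the ``constant'' part and the ``linear'' (and higher) parts of a $Q$-valued polynomial cannot be permuted independently — so we must work with the $\A_Q$-valued coefficient tuples $a_{(r)}(x_0,\rho)$ as a whole, exactly as in Lemma \ref{lemma:poly_estimate}. Fix $|p|\le k-1$ and a base point $x_0\in\overline{\Omega}$. For $h\in\R^n$ small enough that $x_0+h\in\overline{\Omega}$ (using convexity of $\Omega$, so that a whole segment/ball stays in $\Omega$), set $\rho:=2|h|$ and compare the two fitted polynomials $P_{x_0,\rho}$ and $P_{x_0+h,\rho}$ on $\Omega\cap B_{|h|}(x_0)$. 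First I would show, mimicking Lemma \ref{estimate_2} but now keeping \emph{all} orders $|p|\le r$ (where $r$ is chosen with $n+rq<\lambda\le n+(r+1)q$; note $r\le k$ and here we may take $r=k$ if $\lambda>n+kq$, or more generally the $r$ from Lemma \ref{estimate_4}), that
$$\int_{\Omega\cap B_{|h|}(x_0)}\G(P_{x_0,\rho},P_{x_0+h,\rho})^q\le C\vertiii{u}^q_{k,q,\lambda}\,|h|^\lambda,$$
by the triangle inequality for $\G$ together with $\Omega\cap B_{|h|}(x_0)\subset\Omega\cap B_\rho(x_0)$ and $\Omega\cap B_{|h|}(x_0)\subset\Omega\cap B_\rho(x_0+h)$ and the defining bound for $\vertiii{u}_{k,q,\lambda}$. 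Then, by Lemma \ref{lemma:poly_estimate} applied with $E=\Omega\cap B_{|h|}(x_0)$ (which has $\H^n$-measure $\ge A|h|^n$ since $\Omega$ is $A$-weighted) and the two polynomials \emph{re-expanded about the common centre $x_0$} — an honest change of variables that leaves them polynomials of degree $\le k$ — we get, for every $|p|\le r$,
$$\G\big(a_{(r)}(x_0,\rho),\,\widetilde a_{(r)}(x_0+h,\rho)\big)\le C\vertiii{u}_{k,q,\lambda}\,|h|^{(\lambda-n-rq)/q},$$
where $\widetilde a_{(r)}(x_0+h,\rho)$ denotes the Taylor coefficients of $P_{x_0+h,\rho}$ \emph{at the point $x_0$} rather than at $x_0+h$.

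The second ingredient is the elementary algebraic identity relating Taylor coefficients of a fixed polynomial at two different base points: for a single polynomial $P$ of degree $\le k$,
$$D^pP(x_0)=\sum_{|p|\le|q|\le k}\frac{1}{(q-p)!}\,D^qP(x_0+h)\,(-h)^{q-p}.$$
Since the permutation realising $\G$ in Lemma \ref{lemma:poly_estimate} acts on the branch index $i$ uniformly across all multi-indices, this identity lifts to the $\A_Q$-valued tuples: $\widetilde a_{(r)}(x_0+h,\rho)$ is, branch-by-branch under a \emph{single} common permutation, this explicit linear-in-$h$ reshuffling of $a_{(\cdot)}(x_0+h,\rho)$. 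Combining this with the estimate of the previous paragraph and with Lemma \ref{estimate_4} applied at both $x_0$ and $x_0+h$ (which gives $\G(a_{(r)}(\cdot,\rho),v_{(r)}(\cdot))\le C\vertiii{u}_{k,q,\lambda}\rho^{(\lambda-n-rq)/q}$), one extracts the first-order expansion: writing out the $|p|$-component of the coefficient-comparison and isolating the term of order $|p|$ versus the terms of order $|p|+1$, we obtain
$$\G\Big(v_p(x_0+h),\ \textstyle\sum_i\big\llbracket v^i_p(x_0)+\sum_{j=1}^n v^i_{p+e_j}(x_0)\,h_j\big\rrbracket\Big)=o(|h|)\quad\text{as }h\to0,$$
using that $\lambda>n+kq\ge n+(|p|+1)q$ so the error exponent $(\lambda-n-(|p|+1)q)/q>0$ and hence $\rho^{(\lambda-n-(|p|+1)q)/q}\cdot|h|\ll|h|$, while the higher-order Taylor remainder terms contribute $O(|h|^2)$. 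By the definition of differentiability for $Q$-valued functions (with $A_i$ the matrix whose $j$-th column is $v^i_{p+e_j}(x_0)$) this says precisely that $v_p$ is differentiable at $x_0$ with $D_jv^i_p(x_0)=v^i_{p+e_j}(x_0)$; since $x_0\in\overline{\Omega}$ was arbitrary the claim follows.

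The step I expect to be the main obstacle is making the coefficient change-of-base rigorous \emph{inside the metric $\G$}: I must verify that one and the same permutation $\sigma\in S_Q$ simultaneously near-optimises $\G$ for the full tuple $a_{(r)}$ and is the one used in the Taylor-reexpansion identity, so that the $o(|h|)$ in the constant term and the matching of the linear term are governed by a \emph{single} branch labelling rather than independent permutations at each order — this is exactly the subtlety flagged in the discussion preceding Lemma \ref{lemma:poly_estimate} ($\A_Q(\R^{n\times m})\not\cong\A_Q(\R^n)\times\A_Q(\R^m)$). The clean way around it is to never separate orders: apply Lemma \ref{lemma:poly_estimate} once to the \emph{whole} degree-$\le r$ coefficient vector, fix the realising permutation, and only then read off individual components; the branch-uniformity of the Taylor identity then does the rest. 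The remaining verifications — that $x_0+h$ and the relevant ball stay in $\overline\Omega$ for small $h$ (convexity), and bookkeeping of the combinatorial constants — are routine.
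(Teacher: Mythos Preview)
Your overall strategy is sound and close in spirit to the paper's, but the step you yourself flag as ``the main obstacle'' is not actually resolved by the fix you propose. You say: apply Lemma~\ref{lemma:poly_estimate} once to the full degree-$\le k$ coefficient vector, fix the realising permutation $\sigma$, then read off the $p$-component and the $(p+e_j)$-components separately. The problem is one of \emph{rates}. With the full $(k)$-tuple, Lemma~\ref{estimate_4} only gives $\G(a_{(k)}(\cdot,\rho),v_{(k)}(\cdot))\le C\rho^{(\lambda-n-kq)/q}$, whose exponent is merely $>0$; reading off the $p$-component for this $\sigma$ therefore yields $|a_p^{\sigma(i)}-v_p^i|=o(1)$, not the $o(|h|)$ you need. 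Conversely, taking $\sigma$ to realise $\G(a_{(k-1)},v_{(k-1)})$ gives the correct $o(|h|)$ on the $p$-component (since $(\lambda-n-(k-1)q)/q>1$), but then you have no control on $|a_{p+e_j}^{\sigma(i)}-v_{p+e_j}^i|$ when $|p|=k-1$, because those components sit outside the $(k-1)$-tuple. In short, no single choice of $r$ gives one permutation that simultaneously yields the sharp rate on order $|p|$ and any rate on order $|p|+1$. (The \emph{scaled} tuple $a_\rho$ in Lemma~\ref{lemma:poly_estimate} does give the right graded rates for the polynomial-to-polynomial comparison, but the analogue of Lemma~\ref{estimate_4} for $a_\rho$ versus $v$ is not available: the scaling factor $\rho^{|p|}$ varies along the dyadic sequence, so the telescoping in Lemma~\ref{estimate_3} breaks.)

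The paper's proof addresses exactly this by working differently. It proceeds by \emph{downward induction on $|p|$}, so that continuity of $v_{p+e_j}$ is already known; it fixes the permutation $\sigma_\rho$ not from the $a$-versus-$v$ comparison but from the polynomial-to-polynomial comparison $D^pP_{x_0+\rho e_j,2|\rho|}(x_0)$ versus $D^pP_{x_0,2|\rho|}(x_0)$; and then, along a subsequence on which $\sigma_\rho\equiv\sigma'$ is constant, it uses the auxiliary fact $(\star)$ together with continuity of $v_p$ (established en route) to show $v_p^{\sigma'(i)}(x_0)=v_p^i(x_0)$, which is what lets the same $\sigma'$ serve both purposes. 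Your write-up would go through if you insert these two ingredients --- the induction (to have continuity of the next level up) and the subsequence/$(\star)$ argument (to reconcile the permutations) --- but as stated the ``read off components'' step is a genuine gap.
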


\begin{proof}
	We work by downwards induction on $|p|$. Fix a multi-index $p$ with $|p|\leq k-1$ and $x_0\in\overline{\Omega}$. We know from Lemma \ref{v_holder} that $v_p$, where $|p|=k$, is H\"older continuous on $\overline{\Omega}$, and so in our induction we may assume without loss of generality that $v_{p+\ell e_j}$ is continuous on $\overline{\Omega}$ for $\ell=1,2,\dotsc,k-|p|$.
	
	Set $L^i(\rho):= v_p^i(x_0) + \rho v^i_{p+e_j}(x_0)$: note this is a well-defined $Q$-valued (affine) function, since we know that $a_{(k)}(\cdot,\rho)$ converges to $v_{(k)}$ uniformly by Lemma \ref{estimate_4}, and so for each $i$ we have $v^i_{(k)} = (v^i_p)_{|p|\leq k}$ is a function $\Omega\to \R^{N}$ for some $N = N(n,m,k)$, and so given an $i$ we have a well-defined way of choosing $v^i_p$ and $v^i_{p+e_j}$. We will show that $L(\rho)$ provides the suitable linear approximation to $v_p$ at $x_0$ for the definition of differentiability.
	
	Firstly, since the $v^i$ provide a natural ordering to the unordered tuples, for each $x_0,\rho$ we can reorder $a(x,\rho)$ to assume without loss of generality that
	$$\G(a(x,\rho),v(x))^2 := \inf_\sigma\sum_i |v^i(x) - a^{\sigma(i)}(x,\rho)|^2 = \sum_i|v^i(x) - a^i(x,\rho)|^2$$
	i.e., the infimum is attained at $\sigma = \id$.
	
	Now note that for any $\sigma_1\in S_Q$ and any $\rho$ with $|\rho|$ sufficiently small:
	\begin{align*}
		\rho^{-1}\G(v_p(x_0+\rho e_j),L(\rho)) & \leq \underbrace{\rho^{-1} \G(v_p(x_0+\rho e_j),a_p(x_0+\rho e_j,2|\rho|))}_{(1)}\\
		& + \underbrace{\rho^{-1}\G(a_p(x_0+\rho e_j,2|\rho|), a_p(x_0,2|\rho|) + \rho v^{\sigma_1(\cdot)}_{p+e_j}(x_0))}_{(2)}\\
		& + \underbrace{\rho^{-1}\G(a_p(x_0,2|\rho|) + \rho v^{\sigma_1(\cdot)}_{p+e_j}(x_0), L(\rho))}_{(3)}.
	\end{align*}
	Let us look at each term individually. For (1) we simply have from Lemma \ref{estimate_4}, as $|p|\leq k-1$,
	$$(1) \leq \rho^{-1}\G(v_{(k-1)}(x_0+\rho e_j),a_{(k-1)}(x_0+\rho e_j,2|\rho|)) \leq C\vertiii{u}_{k,q,\lambda}\cdot\rho^{\frac{\lambda - n - (k-1 + 1)q}{q}}$$
	and so $(1)\to 0$ as $\rho \to 0$, since $\lambda> n+kq$. For (2), note that for any $\sigma\in S_Q$,
	\begin{align*}
		(2)^2 & \leq \rho^{-2}\sum_i\left|a^{\sigma(i)}_p(x_0+\rho e_j,2|\rho|) - a_p^{i}(x_0,2|\rho|) - \rho v_{p+e_j}^{\sigma_1(i)}(x_0)\right|^2\\
		& = \rho^{-2}\sum_i\left|D^p P^{\sigma(i)}_{x_0+\rho e_j,2|\rho|}(x)|_{x=x_0+\rho e_j} - D^p P^i_{x_0,2|\rho|}(x)|_{x=x_0} - \rho v^{\sigma_1(i)}_{p+e_j}(x_0)\right|^2.
	\end{align*}
	Now we can readily check that
	$$D^pP^{\sigma(i)}_{x_0+\rho e_j,2|\rho|}(x_0+\rho e_j) - D^p P^{\sigma(i)}_{x_0+\rho e_j,2|\rho|}(x_0) = -\sum_{\ell=1}^{|k|-p}\frac{(-1)^\ell}{\ell!}a_{p+\ell e_j}^{\sigma(i)}(x_0+\rho e_j,2|\rho|)\rho^\ell$$
	and thus substituting this in and using the triangle inequality we get:
	$$\hspace{-2em}(2)^2 \leq 2\rho^{-2}\sum_i \left|D^p [P^{\sigma(i)}_{x_0+\rho e_j,2|\rho|} - P^i_{x_0,2|\rho|}]|_{x=x_0}\right|^2 + 2\sum_i\left|v^{\sigma_1(i)}_{p+e_j}(x_0) + \sum^{k-|p|}_{\ell = 1}\frac{(-1)^\ell}{\ell !}a^{\sigma(i)}_{p+\ell e_j}(x_0+\rho e_j,2|\rho|)\rho^{\ell-1}\right|^2.$$
	Now choose $\sigma = \sigma_\rho$ such that the first sum here equals $\G(D^p P_{x_0+\rho e_j,2|\rho|}(x_0),D^p P_{x_0,2|\rho|}(x_0))^2$. Since $|x_0+\rho e_j - x_0| = \rho$, we can apply Lemma \ref{lemma:poly_estimate} and argue as in the proof of Lemma \ref{estimate_2} to see that this first term $\to 0$ as $\rho \to 0$ (again using the fact that $|p|\leq k-1$ so that we can absorb the $\rho^{-2}$ factor and still get the decay). Moreover note that, since
	$$\hspace{-1.5em}D^pP^{\sigma_\rho(i)}_{x_0+\rho e_j,2|\rho|}(x_0) - D^p P^i_{x_0,2|\rho|}(x_0) = a^{\sigma_\rho(i)}_p(x_0+\rho e_j,2|\rho|) - a^i_p(x_0,2|\rho|) + \sum^{k-|p|}_{\ell=1}\frac{(-1)^\ell}{\ell!}a^{\sigma_\rho(i)}_{p+\ell e_j}(x_0+\rho e_j,2|\rho|)\rho^\ell$$
	we see that necessarily:
	\begin{equation*}\tag{$\star$}
	|v^{\sigma_\rho(i)}_p(x_0+\rho e_j) - v^i_p(x_0)| \to 0.
	\end{equation*}
	Hence we see that
	\begin{align*}
		(2)^2 & \leq o(1) + 4\sum_i\left|v^{\sigma_1(i)}_{p+e_j}(x_0) - a^{\sigma_\rho(i)}_{p+ e_j}(x_0+\rho e_j,2|\rho|)\right|^2\\
		& \leq o(1) + 8\sum_i |v^{\sigma_1(i)}_{p+e_j}(x_0) - v^{\sigma_1(i)}_{p+e_j}(x_0+\rho e_j)|^2 + 8\sum_i |v^{\sigma_1(i)}_{p+e_j}(x_0+\rho e_j) - a_{p+e_j}^{\sigma_\rho(i)}(x_0 + \rho e_j,2|\rho|)|^2
	\end{align*}
	and so if we choose $\sigma_1 = \sigma_\rho$, we know that this $\to 0$, since then by construction the second sum equals $\G(v_{p+e_j}(x_0+\rho e_j),a_{p+e_j}(x_0+\rho e_j,2|\rho|))^2$ which $\to 0$, and the first sum $\to 0$ by the assumed continuity of $v_{p+e_j}$. Note that $\sigma_1$ was arbitrary, so we can choose it dependent on $\rho$.
	
	Finally for $(3)$ we have by any $\sigma\in S_Q$,
	\begin{align*}
		(3)^2 & \leq \rho^{-2}\sum_{i}\left|a_p^i(x_0,2|\rho|) + \rho v^{\sigma_1(i)}_{p+e_j}(x_0) - v^{\sigma(i)}_p(x_0) - \rho v^{\sigma(i)}_{p+e_j}(x_0)\right|^2\\
		& \leq 2\rho^{-2}\sum_{i}\left|a_p^i(x_0,2|\rho|) - v_p^{\sigma(i)}(x_0)\right|^2 + 2\sum_i |v^{\sigma_1(i)}_{p+e_j}(x_0) - v_{p+e_j}^{\sigma(i)}(x_0)|^2.\tag{$\dagger$}
	\end{align*}
	In particular, we can re-run this argument to see that $v_p$ is continuous at $x_0$ in the $e_j$ direction: indeed, if instead we just had $L^i \equiv v_p^i(x_0)$, and considered $\G(v_p(x_0+\rho e_j),L)$, from the above we would have shown that, for each $j$,
	$$\G(v_p(x_0+\rho e_j),L) \leq o(1) +\sum_i |a_p^i(x_0,2|\rho|)-v_p^{\sigma(i)}(x_0)|^2$$
	and so just taking $\sigma = \id$, we see this $\to 0$. Similarly we can show $v_p$ is continuous at every point.
	
	So now choosing $\sigma = \sigma_1 (= \sigma_\rho)$ in $(\dagger)$, the second sum vanishes. So we have
	$$(3)^2 \leq \rho^{-2}\sum_{i}\left|a_p^i(x_0,2|\rho|)-v_p^{\sigma_\rho(i)}(x_0)\right|^2.$$
	Now, for any sequence $\rho_t\to 0$, we can find a subsequence $\rho_{t'}$ for such $\sigma_{\rho_{t'}} \equiv \sigma'$ is constant. Hence since we know $v_p$ is continuous, from $(\star)$ we see that $v^{\sigma'(i)}_p(x_0) = v^i_p(x_0)$ for all $i$. In particular we get along this subsequence,
	$$(3)^2 \leq \rho^{-2}_{t'}\sum_{i}|a^i_{p}(x_0,2|\rho_{t'}|) - v_p^i(x_0)|^2 \leq \rho_{t'}^{-2}\G(a_{(k-1)}(x_0,2|\rho_{t'}|),v_{(k-1)}(x_0))^2$$
	which $\to 0$ by Lemma \ref{estimate_4}. Thus we see that every sequence $\rho_{t}\to 0$ has a further subsequence $(\rho_{t'})_{t'}$ such that
	$$\rho^{-1}_{t'}\G(v_p(x_0+\rho_{t'}e_j), L(\rho_{t'})) \to 0$$
	which, by elementary analysis, implies that we have $\lim_{\rho\to 0}\rho^{-1}\G(v_p(x_0+\rho e_j),L(\rho)) = 0$, i.e. $v_p$ is differentiable at $x_0$ with derivative given by $D_j v^i_p = v^i_{p+e_j}$, as required.
\end{proof}
Combining everything so far, we have now shown:
\begin{corollary}\label{reg_for_v}
	Suppose that $\Omega\subset\R^n$ is a convex $A$-weighted domain and $u\in \CL^{q,\lambda}_k(\Omega;\A_Q(\R^m))$ with $\lambda>n+kq$. Then $v_{(0)}\in C^{k,\alpha}(\overline{\Omega};\A_Q(\R^m))$, where $\alpha = \frac{\lambda - n-kq}{q}$, and moreover for all $|p|\leq k$ we have
	$$D^pv_{(0)} = v_p$$
	where $v_{(0)} = v_{(0,0,\dotsc,0)}$.
\end{corollary}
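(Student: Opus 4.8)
The plan is to assemble the pieces already established into the single statement. By Lemma~\ref{v_holder}, the top-order functions $v_k = (v_p)_{|p|=k}$ are $\alpha$-H\"older continuous on $\overline{\Omega}$ with $\alpha = \frac{\lambda - n - kq}{q} \in (0,1)$ (using $\lambda > n+kq$ and $\lambda \le n+(k+1)q$, the latter being forced by $u \in \CL^{q,\lambda}_k$ — if $\lambda > n+(k+1)q$ one first observes that constancy of $k$-th derivatives forces $v_k$ to be constant, so we may harmlessly assume $\lambda \le n+(k+1)q$, or simply note $\alpha$ should be replaced by $\min\{\alpha,1\}$). Then, by the preceding lemma (downwards induction on $|p|$), for every $|p| \le k-1$ the function $v_p$ is differentiable on $\overline{\Omega}$ with $D_j v^i_p = v^i_{p+e_j}$ for all $i,j$; in particular each $v_p$ with $|p| \le k-1$ is $C^1$, and iterating this relation downward from $|p| = k$ shows by induction on $k - |p|$ that $v_p \in C^{k-|p|}(\overline{\Omega};\A_Q(\R^m))$ for every $|p| \le k$, with $v_{(0)} \in C^k(\overline{\Omega};\A_Q(\R^m))$.

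The identity $D^p v_{(0)} = v_p$ for all $|p| \le k$ follows by a second induction on $|p|$: it is trivial for $|p| = 0$, and if $D^p v_{(0)} = v_p$ for some $|p| \le k-1$, then applying $D_j$ and using the previous lemma gives $D^{p+e_j} v_{(0)} = D_j v_p = v_{p+e_j}$; since every multi-index of length $\le k$ is reached this way, the identity holds for all $|p| \le k$. Finally, $D^p v_{(0)} = v_p$ with $|p| = k$ together with the H\"older estimate from Lemma~\ref{v_holder} gives $[v_{(0)}]_{k,\alpha;\Omega} = \sup_{|p|=k}[v_p]_{0,\alpha;\Omega} \le C\vertiii{u}_{k,q,\lambda}$, hence $v_{(0)} \in C^{k,\alpha}(\overline{\Omega};\A_Q(\R^m))$ with the stated bound.

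I do not expect a genuine obstacle here: every substantive estimate (the Cauchy property and rate of convergence of $a_{(r)}(x_0,\rho) \to v_{(r)}(x_0)$, the H\"older bound on $v_k$, and the differentiation relation $D_j v^i_p = v^i_{p+e_j}$) has already been carried out in Lemmas~\ref{estimate_3}, \ref{estimate_4}, \ref{v_holder}, and the lemma immediately preceding this corollary. The only point requiring a word of care is the bookkeeping of the two nested inductions (one to upgrade regularity of $v_p$ from $C^1$ to $C^{k-|p|}$, one to identify $D^p v_{(0)}$ with $v_p$), and the mild remark that the hypothesis $u \in \CL^{q,\lambda}_k$ makes the range of $\lambda$ relevant here effectively $n+kq < \lambda \le n+(k+1)q$, so that $\alpha \in (0,1]$ as claimed.
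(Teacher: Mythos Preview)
Your proposal is correct and matches the paper's approach exactly: the corollary is presented there without a separate proof, simply as ``Combining everything so far, we have now shown'', and your write-up makes the two nested inductions explicit in the way intended. One small remark: the constraint $\lambda \le n+(k+1)q$ is not actually forced by membership in $\CL^{q,\lambda}_k$; the paper allows $\alpha>1$ and handles that case in a separate remark after the corollary (where $D^k v_{(0)}$ is then constant), so you can drop that parenthetical hedge.
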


\textbf{Remark:} If for each $x_0,\rho$ the polynomial $P_{x_0,\rho}$ was of degree at most $r<k$, then we would have $a_p(x_0,\rho) \equiv 0$ for all $|p|>r$, and so $v_p\equiv 0$ for all $|p|>r$. In particular, $D^{r+1}v_{(0)} \equiv 0$, and so $v_{(0)}$ is a polynomial of degree at most $r$.

\textbf{Remark:} If $\lambda>n+(k+1)q$, then $\alpha>1$, and so $D^kv_{(0)}$ is H\"older continuous with exponent $>1$, implying that $D^k v_{(0)} \equiv \text{constant}$. Hence in this case $v_{(0)}$ is a polynomial of degree at most $k$.

Next we prove a special case of Theorem \ref{thm:campanato}, which we will use to prove the general result.

\begin{theorem}\label{initial_campanato}
	Suppose $\Omega$ is a convex $A$-weighted domain and $u\in \CL^{q,\lambda}_k(\Omega;\A_Q(\R^m))$. Suppose that $\lambda>n+kq$. Then $u\in C^{k,\alpha}(\overline{\Omega};\A_Q(\R^m))$, with the estimate
	$$[u]_{k,\alpha} \leq C\vertiii{u}_{k,q,\lambda}$$
	where $C = C(n,m,k,q,Q,A,\lambda)$.
\end{theorem}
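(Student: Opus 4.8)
The plan is to harvest everything built up so far, so that only one identification remains. By Corollary \ref{reg_for_v} the function $v_{(0)}$ constructed above lies in $C^{k,\alpha}(\overline{\Omega};\A_Q(\R^m))$ with $D^p v_{(0)}=v_p$ for all $|p|\leq k$, and Lemma \ref{v_holder} gives $[v_{(0)}]_{k,\alpha}=[v_k]_{0,\alpha;\overline{\Omega}}\leq C\vertiii{u}_{k,q,\lambda}$ with $C=C(n,m,k,q,Q,\lambda,A)$. Hence the theorem reduces to the single assertion that $u$ agrees $\H^n$-a.e.\ in $\Omega$ with $v_{(0)}$: granting this, $u$ has the $C^{k,\alpha}$ representative $v_{(0)}$ and $[u]_{k,\alpha}=[v_{(0)}]_{k,\alpha}\leq C\vertiii{u}_{k,q,\lambda}$, as required.

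To prove $u=v_{(0)}$ a.e., I would work at Lebesgue points. Fix $x_0\in\Omega$ at which the conclusion of Theorem \ref{thm:lebesgue_differentiation} holds --- this is $\H^n$-a.e.\ $x_0$ --- and note that, $\Omega$ being open, $B_\rho(x_0)\subset\Omega$ for all small $\rho>0$. From the definition of $\vertiii{u}_{k,q,\lambda}$ and $\lambda>n$ we have $\rho^{-n}\int_{B_\rho(x_0)}\G(u,P_{x_0,\rho})^q\leq \rho^{\lambda-n}\vertiii{u}^q_{k,q,\lambda}\to 0$, while the Lebesgue point property gives $\rho^{-n}\int_{B_\rho(x_0)}\G(u,u(x_0))^q\to 0$; combining these with the elementary inequality $\G(S,T)^q\leq 2^q\G(S,R)^q+2^q\G(R,T)^q$ yields
$$\rho^{-n}\int_{B_\rho(x_0)}\G\bigl(u(x_0),\,P_{x_0,\rho}(x)\bigr)^q\,\ext x \longrightarrow 0,$$
where $u(x_0)$ is regarded as a constant $Q$-valued polynomial in $\CP_k$.

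Next I would apply the Remark following Lemma \ref{lemma:poly_estimate} with $F\equiv u(x_0)$, $G=P_{x_0,\rho}$, $E=B_\rho(x_0)$ (so $\H^n(E)=\w_n\rho^n$) and $r=0$: for $\rho<1$ this converts the average bound above into the pointwise estimate
$$\G\bigl(u(x_0),\,a_{(0)}(x_0,\rho)\bigr)^q \leq \frac{C_1}{\rho^n}\int_{B_\rho(x_0)}\G\bigl(u(x_0),P_{x_0,\rho}\bigr)^q \longrightarrow 0,$$
i.e.\ $a_{(0)}(x_0,\rho)\to u(x_0)$ as $\rho\to 0$. Since $a_{(0)}(x_0,\rho)\to v_{(0)}(x_0)$ as $\rho\to 0$ by construction (Lemma \ref{estimate_4}), uniqueness of limits forces $v_{(0)}(x_0)=u(x_0)$. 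As this holds for $\H^n$-a.e.\ $x_0\in\Omega$, we conclude $u=v_{(0)}$ a.e., and the proof is complete.

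I expect the only delicate step to be the passage from the $L^q$-average bound to the pointwise convergence $a_{(0)}(x_0,\rho)\to u(x_0)$: this is precisely what Lemma \ref{lemma:poly_estimate} is designed to supply --- extracting the degree-zero coefficient, which is insensitive to the $\rho^{|p|}$ rescaling appearing there --- so no genuine obstacle remains, only the routine care of taking $\rho$ small enough that $B_\rho(x_0)\subset\Omega$ and $\rho<1$.
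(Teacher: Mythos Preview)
Your proposal is correct and follows essentially the same route as the paper: reduce to showing $u=v_{(0)}$ $\H^n$-a.e.\ by proving $a_{(0)}(x_0,\rho)\to u(x_0)$ at Lebesgue points, then invoke Corollary \ref{reg_for_v}. The only minor variation is that where the paper splits $\G(a_{(0)}(x_0,\rho),u(x_0))^q$ into three averaged terms and handles the ``polynomial minus its constant term'' piece directly via the uniform boundedness of the coefficients $a_p^i$, you instead first combine the Campanato and Lebesgue estimates to control $\rho^{-n}\int_{B_\rho(x_0)}\G(u(x_0),P_{x_0,\rho})^q$ and then invoke the Remark after Lemma \ref{lemma:poly_estimate} with $r=0$ to extract the constant coefficient --- a slightly cleaner packaging of the same content.
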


\begin{proof}
	Fix $x_0\in \Omega$ and let $\rho>0$. Then by the triangle inequality and integrating over $\Omega\cap B_\rho(x_0)$, since $\Omega$ is $A$-weighted we get for some constant $C = C(q)$,
	\begin{align*}
		\G(a_{(0)}(x,\rho)&,u(x_0))^q \leq \underbrace{\frac{C(q)}{A\rho^n}\int_{\Omega\cap B_\rho(x_0)}\G(P_{x_0,\rho}(x),a_{(0)}(x_0,\rho))^q\ \ext x}_{(1)}\\
		& + \underbrace{\frac{C(q)}{A\rho^n}\int_{\Omega\cap B_\rho(x_0)}\G(P_{x_0,\rho}(x),u(x))^q\ \ext x}_{(2)} + \underbrace{\frac{C(q)}{A\rho^n}\int_{\Omega\cap B_\rho(x_0)}\G(u(x),u(x_0))^q\ \ext x}_{(3)}.
	\end{align*}
	For (1) notice that for all $\rho<1$,
	$$\G(P_{x_0,\rho}(x),a_{(0)}(x_0,\rho))^2 \leq \sum_i |P^i_{x_0,\rho}(x)-a^i_{(0)}(x_0,\rho)|^2 = \sum_i\left|\sum_{|p|\neq 0}\frac{a^i_p(x_0,\rho)}{p!}\cdot (x-x_0)^p\right|^2 \leq K\rho^2$$
	for some constant independent of $\rho$, since $a^i_p$ converge uniformly and so are bounded. Hence we see that $(1)\to 0$ as $\rho\to 0$. For (2) simply notice that $(2) \leq \frac{C(q)}{A\rho^n}\cdot \rho^\lambda\cdot \vertiii{u}_{k,q,\lambda}^q \to 0$ since $\lambda>n+kq$. For the final term, for $\rho$ sufficiently small we have $B_\rho(x_0)\subset \Omega$, and so $(3)\to 0$ follows for $\H^n$-a.e. $x\in \Omega$ from the Lebesgue differentiation theorem, Theorem \ref{thm:lebesgue_differentiation}. 
	
	Hence we see that for $\H^n$-a.e. $x_0\in \H^n$, $\lim_{\rho\to 0}\G(a_{(0)}(x,\rho),u(x_0))^q = 0$
	i.e., we have $v_{(0)} = u$ $\H^n$-a.e. in $\Omega$. Since $v_{(0)}\in C^{k,\alpha}(\overline{\Omega};\A_Q(\R^m))$ from Corollary \ref{reg_for_v}, this completes the proof.
\end{proof}
Finally, using Theorem \ref{initial_campanato} we are able to prove the full Campanato regularity theorem, Theorem \ref{thm:campanato}.

\begin{proof}[Proof of Theorem \ref{thm:campanato}]

Clearly since we are taking an infimum over a larger set, $\CL^{q,\lambda}_\ell(\Omega;\A_Q(\R^m))\subset \CL^{q,\lambda}_k(\Omega;\A_Q(\R^m))$ and $\vertiii{u}_{k,q,\lambda} \leq \vertiii{u}_{\ell,q,\lambda}$. Now take $u\in \CL^{q,\lambda}_k(\Omega;\A_Q(\R^m))$, where $\lambda,\ell,k$ obey the stated relations. Fix a multi-index $p$ with $\ell<|p|\leq k$. Fixing $\rho_0$, we get from the argument in Lemma \ref{estimate_3} that for any $i\geq 1$ and $x_0\in \overline{\Omega}$, since $n+|p|q\geq \lambda$,
\begin{align*}
\G(a_p(x_0,\rho_0),a_p(x_0,2^{-i}\rho_0)) & \leq C\vertiii{u}_{k,q,\lambda}\sum^{i-1}_{j=0}2^{j\left(\frac{n+|p|q-\lambda}{q}\right)}\cdot \rho_0^{\frac{\lambda-n-|p|q}{q}}\\
& = C\vertiii{u}_{k,q,\lambda}\cdot \rho_0^{\frac{\lambda-n-|p|q}{q}}\cdot\frac{2^{i\left(\frac{n+|p|q-\lambda}{q}\right)}-1}{2^{\frac{n+|p|q-\lambda}{q}}-1}\\
& \leq \tilde{C}\vertiii{u}_{k,q,\lambda}\cdot\left(\frac{\rho_0}{2^i}\right)^{\frac{\lambda-n-|p|q}{q}}
\end{align*}
where $\tilde{C}$ can depend on $|p|$, although this will not matter. Now for any $\rho< \rho_0$ we can find $i\in \{0,1,\dotsc\}$ with $\rho/\rho_0\in [2^{-(i+1)},2^{-i})$, and so by the above inequality with this $i$ we have
$$\G(a_p(x_0,\rho_0),a_p(x_0,2^{-i}\rho_0)) \leq \tilde{C}\vertiii{u}_{k,q,\lambda}\cdot \rho^{\frac{\lambda-n-|p|q}{q}}.$$
Hence we have from Lemma \ref{lemma:poly_estimate} that, estimating as in the proof of Lemma \ref{estimate_4},
$$\G(a_p(x_0,\rho),a_p(x_0,2^{-i}\rho_0))^q \leq \frac{C_1}{\rho^{n+|p|q}}\int_{\Omega\cap B_\rho(x_0)}\G(P_{x_0,\rho},P_{x_0,2^{-i}\rho_0})^q \leq C\vertiii{u}^q_{k,q,\lambda}\cdot\left(\frac{\rho^\lambda + (2^{-i}\rho_0)^\lambda}{\rho^{n+|p|q}}\right)$$
i.e.
$$\G(a_p(x_0,\rho),a_p(x_0,2^{-i}\rho_0)) \leq C\vertiii{u}_{k,q,\lambda}\rho^{\frac{\lambda - n-|p|q}{q}}.$$
Combining the last inequalities we see that for any $\rho\leq \rho_0$,
$$|a_p(x_0,\rho)| \leq \bar{C}\vertiii{u}_{k,q,\lambda}\cdot\rho^{\frac{\lambda-n-|p|q}{q}} + |a_p(x_0,\rho_0)|.$$
Now applying Lemma \ref{lemma:poly_estimate} with $F = P_{x_0,\rho_0}$ and $G \equiv 0$, we have
\begin{align*}
|a_p(x_0,\rho_0)|^q & \leq \frac{C_1}{\rho_0^{n+|p|q}}\int_{\Omega\cap B_{\rho_0}(x_0)} |P_{x_0,\rho_0}|^q\\
& \leq \frac{\tilde{C}}{\rho_0^{n+|p|q}}\left[\int_{\Omega\cap B_{\rho_0}(x_0)} \G(P_{x_0,\rho_0},u)^q + \int_\Omega |u|^q\right]\\
& \leq \frac{\tilde{C}}{\rho_0^{n+|p|q}}\left[\vertiii{u}_{k,q,\lambda}^q \cdot\rho_0^\lambda + \|u\|^q_{L^q(\Omega)}\right]\\
& = \bar{C}\|u\|^q_{k,q,\lambda}
\end{align*}
where $\bar{C}$ depends on $p$ and $\rho_0$. Note that this is $\|u\|_{k,q,\lambda}$ and not $\vertiii{u}_{k,q,\lambda}$. Hence we see that for any $\rho<1$, $x_0\in\overline{\Omega}$ and $|p|>\ell$, we have since $\lambda<n+|p|q$,
$$|a_p(x_0,\rho_0)| \leq C\|u\|_{k,q,\lambda}\cdot\rho^{\frac{\lambda-n-|p|q}{q}}.$$
Thus for all such $\rho,x_0,p$ we see
$$|a_p(x_0,\rho)| \leq C\|u\|_{k,q,\lambda}\cdot\rho^{\frac{\lambda-n-|p|q}{q}}$$
and so, for any $x_0\in \overline{\Omega}$ and $\rho<1$,
\begin{align*}
	\inf_{P\in \CP_\ell}\int_{\Omega\cap B_\rho(x_0)}\G(u,P)^q & \leq \int_{\Omega\cap B_\rho(x_0)}\G\left(u,\sum_{|p|\leq \ell}\frac{a_p(x_0,\rho)}{p!}(x-x_0)^p\right)^q\\
	& \leq 2^q\int_{\Omega\cap B_\rho(x_0)}\G(u,P_{x_0,\rho})^q + 2^q\int_{\Omega\cap B_\rho(x_0)}\G\left(P_{x_0,\rho},\sum_{|p|\leq \ell}\frac{a_p(x_0,\rho)}{p!}\cdot (x-x_0)^p\right)^q\\
	& \leq 2^q\vertiii{u}_{k,q,\lambda}^q \cdot \rho^\lambda + 2^q\int_{\Omega\cap B_\rho(x_0)}\left(\sum_i\left|\sum_{\ell<|p|\leq k}\frac{a_p(x_0,\rho)}{p!}\cdot (x-x_0)^p\right|^2\right)^{q/2}\\
	& \leq 2^q\vertiii{u}^q_{k,q,\lambda}\cdot\rho^\lambda + C\|u\|_{k,q,\lambda}^q\cdot\rho^{\lambda}\\
	& \leq \tilde{C}\|u\|^q_{k,q,\lambda}\rho^\lambda.
\end{align*}
Now when $\rho\geq 1$, this bound is immediate because we know that
$$\inf_{P\in \CP_\ell}\int_{\Omega\cap B_\rho(x_0)}\G(u,P)^q \leq \int_{\Omega\cap B_{\rho}(x_0)}\G(u,Q\llbracket 0\rrbracket)^q \leq \|u\|_{q}^q \leq \|u\|_{k,q,\lambda}^q\rho^\lambda.$$
So hence we see that
$$\vertiii{u}_{\ell,q,\lambda}^q \leq C\|u\|^q_{k,q,\lambda}$$
i.e. we see that $u\in \CL^{q,\lambda}_\ell(\Omega;\A_Q(\R^m))$. Hence as $\lambda>n+\ell q$ we can apply Theorem \ref{initial_campanato} to get the result.
\end{proof}

\subsection{Modifications relevant to minimal surface regularity theory} 

We now give two additional lemmas which are relevant for the regularity theory of stationary integral varifolds which will be the topic of Section \ref{sec:minimal_surface}. The reason for these modifications is that often we will have certain integral expressions over balls which decay with the radius at a certain rate, \textit{but only for radii such that the ball does not intersect some bad set}. One key example is a boundary point, where we may only have estimates in the interior and so cannot say anything about the boundary regularity unless we have some control of the integrals at the boundary. Another key example for us is interior branch points of multi-valued harmonic functions. Not having a uniform lower bound on the radii which we can apply the estimates to means we cannot immediately deduce that the functions lie in a certain Campanato space, and so cannot deduce any regularity near the bad set. However, if the points on the bad set also have certain integral expressions which decay, and there \textit{is} a uniform lower bound on the radii we can apply this to, we are able to deduce regularity up to the bad set so long as we can compare the integral expressions as the good points to those at the bad points. This is exactly the set up for the first case, Lemma \ref{MSC}, which we then use to prove a more general version, where there are multiple \lq\lq layers'' of bad points: these can be thought of as \lq\lq very bad'' points, \lq\lq bad points'', and \lq\lq good points'', and so forth.

\begin{lemma}\label{MSC}
	Suppose $\Omega\subset\R^n$ is a convex domain and $u:\Omega\to \A_Q(\R^m)$. Let $\Gamma\subset\overline{\Omega}$ be a closed subset and let $\Omega'\subset\Omega''\subset \overline{\Omega}$ be open such that $\overline{\Omega'}\subset\Omega''$ and $\overline{\Omega''}\subset\Omega$ are compact. Suppose also $\Omega'$ is a convex $A$-weighted domain. Suppose that there are numbers $k\in\{0,1,\dotsc\}$, $q\in [1,\infty)$, $\beta_1,\beta_2,\beta\in (0,\infty)$, $\mu\in (0,1)$, and $\epsilon \in (0,1/4)$ such that we have the following:
	\begin{enumerate}
		\item [(I)] For each $x_0\in \Gamma\cap \overline{\Omega''}$, there exists $P_{x_0}\in \CP_k$ with $\sup_{\Omega}|P_{x_0}| \leq \beta$ such that for all $0<\sigma\leq \rho/2\leq \epsilon/2$,
		$$\sigma^{-n-kq}\int_{B_\sigma(x_0)\cap \Omega}\G(u(x),P_{x_0}(x))^q\ \ext x \leq \beta_1\left(\frac{\sigma}{\rho}\right)^{q\mu}\cdot\rho^{-n-kq}\int_{B_\rho(x_0)\cap\Omega}\G(u(x),P_{x_0}(x))^q\ \ext x;$$
		\item [(II)] For each $x_0\in \overline{\Omega''}\backslash \Gamma$, there exists $P_{x_0}\in \CP_k$ such that for every $y\in \Gamma\cap\overline{\Omega''}$ and every $0<\sigma\leq \rho/2 < \frac{1}{2}\min\{1/4,\dist(x_0,\Gamma)\}$,
		$$\sigma^{-n-kq}\int_{B_\sigma(x_0)\cap\Omega}\G(u(x), P_{x_0}(x))^q\ \ext x \leq \beta_2\left(\frac{\sigma}{\rho}\right)^{q\mu}\cdot\rho^{-n-kq}\int_{B_\rho(x_0)\cap\Omega}\G(u(x),P_y(x))^q\ \ext x.$$
		Then $u\in C^{k,\lambda}(\overline{\Omega'};\A_Q(\R^m))$ for some $\lambda = \lambda(n,m,k,q,Q,A,\beta_1,\beta_2,\epsilon,\mu,\dist(\Omega',\Omega''))$ with the estimate
		$$\|u\|_{k,\lambda;\Omega'} \leq C\left(\beta^q + \int_{\Omega''}|u|^q\right)^{1/q}$$
		where $C = C(n,m,k,q,Q,A,\beta_1,\beta_2,\epsilon,\dist(\Omega',\Omega''))$.
	\end{enumerate}
\end{lemma}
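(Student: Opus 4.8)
The plan is to reduce the statement to an application of Theorem \ref{initial_campanato} on the domain $\Omega'$, by showing that the hypotheses (I) and (II) force $u$ to lie in the Campanato space $\CL^{q,\lambda}_k(\Omega';\A_Q(\R^m))$ for a suitable $\lambda > n + kq$. The key quantity to control is $\inf_{P\in\CP_k}\int_{\Omega'\cap B_\rho(x_0)}\G(u,P)^q$ for all $x_0\in\overline{\Omega'}$ and all $\rho\in(0,\diam(\Omega')]$; since $\overline{\Omega'}\subset\Omega''$ is compact and at positive distance from $\partial\Omega$, for $\rho$ below a fixed scale $\rho_* = \rho_*(\dist(\Omega',\Omega''),\epsilon)$ the ball $B_\rho(x_0)$ stays inside $\Omega''$ and one of the two mechanisms (I) or (II) applies; for $\rho\geq\rho_*$ the bound is trivial, by comparing $u$ to $Q\llbracket 0\rrbracket$ and absorbing $\rho_*^{-\lambda}$ into the constant, exactly as at the end of the proof of Theorem \ref{thm:campanato}.

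First I would treat points $x_0\in\Gamma\cap\overline{\Omega''}$. Hypothesis (I) is a direct decay statement for $\phi(x_0,\sigma) := \sigma^{-n-kq}\int_{B_\sigma(x_0)\cap\Omega}\G(u,P_{x_0})^q$ of the form $\phi(x_0,\sigma)\leq\beta_1(\sigma/\rho)^{q\mu}\phi(x_0,\rho)$; iterating along $\sigma = 2^{-i}\rho$ (as in Lemma \ref{estimate_3}), this gives $\phi(x_0,\sigma)\leq C\beta_1(\sigma/\epsilon)^{q\mu}\phi(x_0,\epsilon/2)$, and $\phi(x_0,\epsilon/2)$ is bounded by $C\epsilon^{-n-kq}(\beta^q + \int_{\Omega''}|u|^q)$ using $\sup_\Omega|P_{x_0}|\leq\beta$. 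Thus for $x_0\in\Gamma$ we obtain
$$\int_{B_\sigma(x_0)\cap\Omega}\G(u,P_{x_0})^q \leq C\bigl(\beta^q + \textstyle\int_{\Omega''}|u|^q\bigr)\sigma^{n+kq+q\mu},$$
which is the required Campanato-type bound at $\Gamma$-points with $\lambda = n + kq + q\mu$.

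Next I would handle points $x_0\in\overline{\Omega''}\setminus\Gamma$, which is the heart of the matter. Set $d := \dist(x_0,\Gamma)$ and $y\in\Gamma\cap\overline{\Omega''}$ a nearest point. For scales $\sigma\leq\rho/2 < \tfrac12\min\{1/4,d\}$, hypothesis (II) controls $\int_{B_\sigma(x_0)}\G(u,P_{x_0})^q$ by $\beta_2(\sigma/\rho)^{q\mu}\rho^{-n-kq}\int_{B_\rho(x_0)}\G(u,P_y)^q$; the point is that the comparison polynomial on the right is $P_y$ (a fixed polynomial independent of $\sigma,\rho$), not $P_{x_0}$, so a single application already yields decay without iterating. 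Choosing $\rho\sim d$ on the right and using $B_\rho(x_0)\subset B_{2d}(y)$ together with the bound for $\Gamma$-points just established, I get $\rho^{-n-kq}\int_{B_\rho(x_0)}\G(u,P_y)^q \leq C(\beta^q+\int_{\Omega''}|u|^q)\,d^{q\mu}$, hence
$$\inf_{P\in\CP_k}\int_{B_\sigma(x_0)\cap\Omega}\G(u,P)^q \leq \int_{B_\sigma(x_0)\cap\Omega}\G(u,P_{x_0})^q \leq C\bigl(\beta^q+\textstyle\int_{\Omega''}|u|^q\bigr)\,\sigma^{n+kq}\,d^{-q\mu}\sigma^{q\mu}\cdot d^{q\mu},$$
so the factors of $d$ cancel and one is left with $C(\beta^q+\int_{\Omega''}|u|^q)\sigma^{n+kq+q\mu}$ for $\sigma$ up to $\sim d$. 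For $\sigma$ between $\sim d$ and $\rho_*$, the ball $B_\sigma(x_0)$ contains a $\Gamma$-point within distance $\sim\sigma$, so splitting $B_\sigma(x_0)\subset B_{C\sigma}(y')$ for that $\Gamma$-point $y'$ reduces to the already-treated case (I). The main obstacle is precisely this book-keeping of which hypothesis applies at which scale and how the powers of $\dist(x_0,\Gamma)$ cancel — one must be careful that the constants remain uniform in $x_0$ and that the matching at the threshold scale $\sigma\sim\dist(x_0,\Gamma)$ is clean; the multi-valued aspect causes no trouble here since everything is phrased through $\G$ and we only ever compare $u$ to single fixed polynomials, never needing to permute tuples between comparison objects. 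Assembling these cases yields $\vertiii{u}_{k,q,\lambda;\Omega'}^q \leq C(\beta^q+\int_{\Omega''}|u|^q)$ with $\lambda = n+kq+q\mu > n+kq$, and Theorem \ref{initial_campanato} applied on the convex $A$-weighted domain $\Omega'$ gives $u\in C^{k,\mu}(\overline{\Omega'};\A_Q(\R^m))$ with the stated estimate.
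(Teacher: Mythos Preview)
Your plan is correct and in fact cleaner than the paper's own argument. Both proofs reduce to showing $u\in\CL^{q,\lambda}_k(\Omega';\A_Q(\R^m))$ for some $\lambda>n+kq$ and then invoke the Campanato regularity theorem. The difference lies in how the two regimes of scale at a point $x_0\notin\Gamma$ are handled. You exploit that the Campanato semi-norm takes an \emph{infimum} over $P\in\CP_k$, so you are free to use $P_{x_0}$ for $\sigma\lesssim\dist(x_0,\Gamma)$ (via (II) with $\rho\sim d$ and then the $\Gamma$-point bound on $B_{Cd}(y)$) and to switch to $P_y$ for $\sigma\gtrsim\dist(x_0,\Gamma)$ (via the inclusion $B_\sigma(x_0)\subset B_{2\sigma}(y)$ and the $\Gamma$-point bound again). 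The powers of $d$ cancel exactly as you say, the matching at $\sigma\sim d$ is harmless since the two scales are comparable, and you emerge with $\lambda=n+kq+q\mu$, i.e.\ H\"older exponent $\mu$.

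The paper instead works harder to show that the \emph{single} polynomial $P_z$ controls the integral at \emph{all} scales $\rho\leq\gamma$: it iterates (I) at the nearest $\Gamma$-point down to the threshold scale $\gamma^{j_*}$, applies (II) below that, and then compares $P_z$ to $P_{y_*}$ via a polynomial-rescaling argument (using convexity of $\Omega$) to transfer the decay back to $P_z$ at the large scales. This costs some loss in the exponent (the paper ends with $\tilde\mu\leq\mu$) and is more involved. What it buys is the stronger intermediate conclusion that $\rho^{-n-kq}\int_{B_\rho(z)}\G(u,P_z)^q\leq C\rho^{q\tilde\mu}(\beta^q+\int|u|^q)$ for all $\rho$, with the polynomial $P_z$ fixed --- and this is exactly what is reused in the proof of Lemma~\ref{MSC:nonflat}, where the points of $\Gamma_i\setminus\Gamma$ become the ``bad'' set for the next layer and one needs their associated polynomial $P^i_{x_1}$ to work uniformly (see (\ref{E:campanato2.1})). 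For Lemma~\ref{MSC} as stated, your shorter route suffices and gives the sharper exponent; the paper's longer route is motivated by the downstream application.
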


\textbf{Remark:} Unlike in \cite[Lemma 4.3]{wickstable}, in (II) we only require the integral on the right-hand side to include those $P\in \CP_k$ which are of the form $P = P_y$ for some $y\in \Gamma\cap\overline{\Omega''}$. Whilst this does not modify the proof, we need this weaker hypothesis later on to choose certain $\Gamma$.

\begin{proof}
	Consider an arbitrary point $z\in \overline{\Omega'}$ and $\rho\in (0,\epsilon)$. Fix $\gamma <\frac{1}{2}\min\{1/4,\dist(\Omega',\Omega'')\}$ to be chosen later with the desired dependencies. Now if there is a point $y\in \Gamma\cap \overline{B_{\gamma\rho}(z)}$, then we have $y\in \Omega''$ and so from property (II),
	\begin{align*}
		(\gamma\rho)^{-n-kq}\int_{B_{\gamma\rho}(z)\cap\Omega}\G(u,P_y)^q & \leq 2^{n+kq}(\gamma\rho+|z-y|)^{-n-kq}\int_{B_{\gamma\rho+|z-y|}(y)\cap\Omega}\G(u,P_y)^q\\
		&  \leq 2^{n+kq}\beta_1\left(\frac{\gamma\rho+|z-y|}{\rho-|z-y|}\right)^{q\mu}\cdot(\rho - |z-y|)^{-n-kq}\int_{B_{\rho-|z-y|}(y)\cap \Omega}\G(u,P_y)^q\\
		& \leq 4^{n+kq}\beta_1\left(\frac{2\gamma}{1-\gamma}\right)^{q\mu}\cdot\rho^{-n-kq}\int_{B_\rho(y)\cap\Omega}\G(u,P_y)^q.
	\end{align*}
	So choosing $\tilde{\gamma} = \tilde{\gamma}(n,k,q,\beta_1,\mu)$ such that $4^{n+kq}\beta_1\left(\frac{2\tilde{\gamma}}{1-\tilde{\gamma}}\right)^{q\mu}<\frac{1}{4}$, we see that if $\gamma = \frac{1}{2}\min\{\tilde{\gamma},\epsilon\}$ we have
	\begin{equation}\label{E:campanato1}
		(\gamma\rho)^{-n-kq}\int_{B_{\gamma\rho}(z)\cap\Omega}\G(u,P_y)^q \leq \frac{1}{4}\cdot\rho^{-n-kq}\int_{B_\rho(y)\cap\Omega}\G(u,P_y)^q
	\end{equation}
	for any $z\in\overline{\Omega'}$ and $\rho\in (0,\epsilon)$, provided $y\in \Gamma\cap\overline{B_{\gamma\rho}(z)}$. If on the other hand $\Gamma\cap \overline{B_{\gamma\rho}(z)} = \emptyset$, then by (II) we have for any $P\in \CP_k$ of the form $P = P_y$ for some $y\in \Gamma\cap\overline{\Omega''}$ that, for any $\sigma\in (0,1/2]$,
	\begin{equation}\label{E:campanato2}
	(\sigma\gamma\rho)^{-n-kq}\int_{B_{\sigma\gamma\rho}(z)\cap\Omega}\G(u,P_z)^q \leq \beta_2\sigma^{q\mu}\cdot(\gamma\rho)^{-n-kq}\int_{B_{\gamma\rho}(z)\cap\Omega}\G(u,P)^q.
	\end{equation}
	Now fix any $z\in \overline{\Omega'}\backslash\Gamma$ with $\dist(z,\Gamma)\leq \gamma$. Let $j_*\in \{1,2,\dotsc\}$ be minimal such that $\Gamma\cap \overline{B_{\gamma^{j_*+1}}(z)} = \emptyset$. Then we can apply (\ref{E:campanato1}) with $\rho = \gamma,\gamma^2,\dotsc,\gamma^{j_*-1}$ to get that for each $j=2,\dotsc,j_*$ (if $y_*\in \Gamma\cap\overline{\Omega''}$ is chosen such that $|z-y_*| = \dist(z,\Gamma)$),
	\begin{equation}\label{E:campanato3}
		(\gamma^j)^{-n-kq}\int_{B_{\gamma^j}(z)\cap\Omega}\G(u,P_{y_*})^q \leq \frac{1}{4}\cdot (\gamma^{j-1})^{-n-kq}\int_{B_{\gamma^{j-1}}(y)\cap\Omega}\G(u,P_{y_*})^q
	\end{equation}
	and applying (\ref{E:campanato2}) with $\rho = \gamma^{j*}$ that, for each $\sigma\in (0,1/2]$ and $P = P_y$ for $y\in \Gamma\cap \overline{\Omega''}$,
	\begin{equation}\label{E:campanato4}
		(\sigma\gamma^{j_*+1})^{-n-kq}\int_{B_{\sigma\gamma^{j_*+1}}(z)\cap\Omega}\G(u,P_z)^q \leq \beta_2\sigma^{q\mu}\cdot(\gamma^{j_*+1})^{-n-kq}\int_{B_{\gamma^{j_*+1}}(z)\cap\Omega}\G(u,P)^q.
	\end{equation}
	Set $P_* := P_{y_*}$. Take $\sigma = 1/2$, $P = P_*$ in (\ref{E:campanato4}) and $j=j_*$ in (\ref{E:campanato3}) to get (after iterating (\ref{E:campanato3})):
	\begin{align*}
		&\left(\frac{1}{2}\gamma^{j_*+1}\right)^{-n-kq}\int_{B_{\frac{1}{2}\gamma^{j_*+1}}(z)\cap\Omega}\G(P_*,P_z)^q\\
		&\hspace{2em} \leq 2^q\left(\frac{1}{2}\gamma^{j_*+1}\right)^{-n-kq}\int_{B_{\frac{1}{2}\gamma^{j_*+1}}(z)\cap\Omega}\G(u,P_z)^q + 2^q\left(\gamma/2\right)^{-n-kq}\cdot (\gamma^{j_*})^{-n-kq}\int_{B_{\gamma^{j_*}}(z)\cap\Omega}\G(u,P_*)^q\\
		& \hspace{2em}\leq C\beta_2(1/2)^{q\mu}(\gamma^{j_*+1})^{-n-kq}\int_{B_{\gamma^{j_*+1}}(z)\cap\Omega}\G(u,P_*)^q + C\cdot 4^{-(j_*-1)}\cdot\gamma^{-n-kq}\int_{B_{\gamma}(y)\cap\Omega}\G(u,P_*)^q\\
		&\hspace{2em} \leq C4^{-(j_*-1)}\int_{B_\gamma(y)\cap \Omega}\G(u,P_*)^q
	\end{align*}
	where $C = C(n,k,q,\gamma,\mu,\beta_2)$. In the last inequality we have again used (\ref{E:campanato3}). Now since these are $Q$-valued polynomials of degree at most $k$, for each $j=1,2,\dotsc,j_*$ we have by substituting $\tilde{x} = f(x):=\frac{1}{2}\gamma^{j^*+1-j}(x-z)+z$ in the integral (and noting that by convexity of $\Omega$ we have $f(\Omega)\subset\Omega$):
	\begin{align*}
		(\gamma^j)^{-n-kq}&\int_{B_{\gamma^j}(z)\cap\Omega}\G(P_*(x),P_z(x))^q\ \ext x \\
		& = (\gamma^j)^{-n-kq}\int_{B_{\frac{1}{2}\gamma^{j_*+1}}(z)\cap \Omega}\G(P_*(f^{-1}(\tilde{x})),P_z(f^{-1}(\tilde{x})))^q\cdot\left(\frac{1}{2}\gamma^{j_*+1-j}\right)^{-n}\ \ext\tilde{x}\\
		& \leq (\gamma^j)^{-n-kq}\cdot\left(\frac{1}{2}\gamma^{j_*+1-j}\right)^{-n-kq}\int_{B_{\frac{1}{2}\gamma^{j_*+1}}(z)\cap\Omega}\G(P_*(\tilde{x}),P_z(\tilde{x}))^q\ \ext\tilde{x}\\
		& \leq C 4^{-(j_*-1)}\int_{B_\gamma(y)\cap\Omega}\G(u,P_*)^q\\
		& \leq C 4^{-j}\int_{B_{\gamma}(y)\cap\Omega}\G(u,P_*)^q
	\end{align*}
	for some $C = C(n,k,q,\beta_1,\beta_2,\mu,\epsilon,\dist(\Omega',\Omega''))$ independent of $j$, i.e. for each $j=1,2,\dotsc,j_*$ we have
	\begin{equation}\label{E:campanato5}
		(\gamma^j)^{-n-kq}\int_{B_{\gamma^j}(z)\cap\Omega}\G(P_*,P_z)^q \leq C4^{-j}\int_{B_\gamma(y)\cap\Omega}\G(u,P_*)^q.
	\end{equation}
	Using (\ref{E:campanato5}) with (\ref{E:campanato3}) we see that for each $j=1,\dotsc,j_*$,
	\begin{equation}\label{E:campanato6}
		(\gamma^j)^{-n-kq}\int_{B_{\gamma^j}(z)\cap \Omega}\G(u,P_z)^q \leq C4^{-(j-1)}\int_{B_\gamma(y)\cap\Omega}\G(u,P_*)^q.
	\end{equation}
	
	Now let $\rho\in (0,\gamma/2]$. Then if we have $\rho\leq \frac{1}{2}\gamma^{j_*+1}$, then we can write $\rho = \sigma\gamma^{j_*+1}$ for some $\sigma\in (0,1/2]$, and then by (\ref{E:campanato4}) we have (choosing $P = P_*$ as estimating as in the derivation of (\ref{E:campanato5})):
	\begin{align*}
	\rho^{-n-kq}\int_{B_\rho(z)\cap \Omega}\G(u,P_z)^q & \leq C\sigma^{q\mu} 4^{-(j_*-1)}\int_{B_\gamma(y)\cap\Omega}\G(u,P_*)^q\\
	& \leq C(\sigma\gamma^{j_*})^{q\mu'}\int_{B_{\gamma}(y)\cap\Omega}\G(u,P_*)^q
	\end{align*}
	where $\mu' = \min\{\mu,q^{-1}\log_{\gamma}(1/4)\}$: recall $\gamma < 1/4$ which ensures $\mu'\in (0,1)$; thus we get for such $\rho$,
	\begin{equation}\label{E:campanato7}
		\rho^{-n-kq}\int_{B_\rho(z)\cap\Omega}\G(u,P_z)^q \leq C\rho^{q\mu}\int_{B_\gamma(y)\cap\Omega}\G(u,P_*)^q.
	\end{equation}
	On the other hand, if $\rho\in (0,\gamma/2]$ has $\rho>\frac{1}{2}\gamma^{j_*+1}$, then we can find $j\in \{1,2,\dotsc,j_*\}$ for which $\gamma^{j+1}<2\rho\leq \gamma^{j}$, and so by (\ref{E:campanato6}) we have
	\begin{equation}\label{E:campanato8}
		\rho^{-n-kq}\int_{B_\rho(z)\cap\Omega}\G(u,P_z)^q \leq C\rho^{q\lambda}\int_{B_\gamma(y)\cap\Omega}\G(u,P_*)^q
	\end{equation}
	where $\lambda = \log_\gamma(1/4)\in (0,1)$. Combining (\ref{E:campanato7}) and (\ref{E:campanato8}) we see that for any $z\in\overline{\Omega'}\backslash\Gamma$ with $\dist(z,\Gamma)\leq \gamma$, we have for all $\rho\in (0,\gamma/2]$, if $\tilde{\mu} := \min\{\lambda,\mu'\}$,
	\begin{equation}\label{E:campanato9}
		\rho^{-n-kq}\int_{B_\rho(z)\cap\Omega}\G(u,P_z)^q \leq C\rho^{q\tilde{\mu}}\int_{B_\gamma(y)\cap\Omega}\G(u,P_*)^q.
	\end{equation}
		In particular since $\sup_{\Omega}|P_*|\leq \beta$ we have that
		$$\rho^{-(n+q(k+\tilde{\mu}))}\int_{B_\rho(z)\cap\Omega}\G(u,P_z)^q \leq C\left(\beta^q + \int_{\Omega''}|u|^q\right)$$
		where now $C$ also depends on $m,Q$. For $z\in \overline{\Omega'}\backslash\Gamma$ with $\dist(z,\Gamma)>\gamma$ the same inequality follows immediately from (II), for all $\rho\in (0,\gamma/2]$. If $z\in \overline{\Omega'}\cap \Gamma$, again the same inequality follows from (I). In all of this modifications to $\tilde{\mu}$, $C$, may need to be made but they have the same dependencies. Hence we see that $u\in \CL^{q,\tilde{\lambda}}_k(\Omega';\A_Q(\R^m))$ where $\tilde{\lambda} = n+q(k+\tilde{\mu})$, and so we can apply Theorem \ref{thm:campanato} to conclude that $u\in C^{k,\tilde{\lambda}}(\Omega';\A_Q(\R^m))$, with the desired estimate.
\end{proof}

The following more general version of Lemma \ref{MSC} will be necessary when there are two different types of \lq\lq bad'' points, which in our case will be boundary points and interior branch points for multi-valued harmonic functions.

\begin{lemma}\label{MSC:nonflat}
	Suppose $\Omega\subset\R^n$ is a convex domain and we have $(Q_i)_{i=1}^N$, $(m_i)_{i=1}^N\subset\Z_{\geq 1}$ and $u_i:\Omega\to \A_{Q_i}(\R^{m_i})$ for $i=1,\dotsc,N$. Suppose we have $\Omega'\subset\subset\Omega''\subset\subset\overline{\Omega}$, with $\Omega'$ a convex $A$-weighted domain. Suppose also that $\Gamma\subset\overline{\Omega}$ is a closed, non-empty subset, and we have $\Gamma_1,\dotsc,\Gamma_N\subset\overline{\Omega}\backslash \Gamma$ such that $\overline{\Gamma}_i\subset\Gamma\cup \Gamma_i$. Suppose that there are numbers $k\in \{0,1,2,\dotsc\}$, $q\in [1,\infty)$, $\beta,\beta_0\in (0,\infty)$, $(\beta_i)_{i=1}^N, (\tilde{\beta}_i)_{i=1}^N\subset(0,\infty)^N$, $\mu\in (0,1)$, $\epsilon\in (0,1/4)$ and subsets $\FP_0,\FP_i,\FP\subset\CP_k$ with $\sup_{\Omega}|P|\leq \beta$ for all $P\in \FP_0$, such that we have the following:
	\begin{enumerate}
		\item [(I)] For each $x_0\in \Gamma\cap\overline{\Omega''}$, there exists $P^1_{x_0},\dotsc,P^N_{x_0}\in \FP_0$ such that for all $0<\sigma\leq \rho/2\leq\epsilon/2$ we have:
		$$\sigma^{-n-kq}\int_{B_\sigma(x_0)\cap\Omega}\sum^N_{i=1}\G(u_i,P^i_{x_0})^q \leq \beta_0\left(\frac{\sigma}{\rho}\right)^{q\mu}\rho^{-n-kq}\int_{B_\rho(x_0)\cap\Omega}\sum^N_{i=1}\G(u_i,P_{x_0}^i)^q;$$
		\item [(II)] For each $i\in\{1,\dotsc,N\}$, we have for each $x_1\in \overline{\Omega''}\cap\Gamma_i\backslash\Gamma$ there exists $P^i_{x_1}\in \FP_i$ such that for every $P\in \FP_0$ and every $0<\sigma\leq\rho/2\leq \frac{1}{2}\min\{1/4,\dist(x_1,\Gamma)\}$ we have:
		$$\sigma^{-n-kq}\int_{B_\sigma(x_1)\cap\Omega}\G(u_i,P^i_{x_1})^q \leq \beta_i\left(\frac{\sigma}{\rho}\right)^{q\mu}\rho^{-n-kq}\int_{B_\rho(x_1)\cap\Omega}\G(u_i,P)^q;$$
		\item [(III)] For each $i\in \{1,\dotsc,N\}$, we have for each $x_2\in \overline{\Omega''}\backslash (\Gamma\cup\Gamma_i)$ there exists $P^i_{x_2}\in \FP$ such that for every $P\in \FP_0\cup\FP_i$ and every $0<\sigma\leq\rho/2\leq \frac{1}{2}\min\{1/4,\dist(x_2,\Gamma\cup\Gamma_i)\}$ we have:
		$$\sigma^{-n-kq}\int_{B_\sigma(x_2)\cap\Omega}\G(u_i,P^i_{x_2})^q \leq \tilde{\beta}_i\left(\frac{\sigma}{\rho}\right)^{q\mu}\rho^{-n-kq}\int_{B_\rho(x_2)\cap\Omega}\G(u_i,P)^q.$$
	\end{enumerate}
	Then $u_i\in C^{k,\lambda}(\overline{\Omega'};\A_{Q_i}(\R^{m_i})$ for each $i$, with the estimate
	$$\|u_i\|_{k,\lambda;\overline{\Omega'}}\leq C\left(\beta^q + \int_{\Omega''}\sum^N_{i=1}|u_i|^q\right)^{1/q}$$
	where $\lambda,C$ both depend on $n,(m_i)_i,k,q,(Q_i)_i,N,A,\beta_0,(\beta_i)_i,(\tilde{\beta}_i)_i,\epsilon,\dist(\Omega',\Omega''))$, and $\lambda$ can also depend on $\mu$.
\end{lemma}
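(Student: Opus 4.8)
The plan is to reduce, for each fixed $i \in \{1,\dots,N\}$, the assertion to the statement that $u_i$ belongs to a Campanato space on $\Omega'$, and then to quote Theorem~\ref{thm:campanato}. Concretely, I would show that there is $\tilde\mu \in (0,1)$ and, for every $z \in \overline{\Omega'}$, a polynomial $P^i_z \in \CP_k$ with
$$\rho^{-\,n-q(k+\tilde\mu)} \int_{B_\rho(z)\cap\Omega} \G(u_i,P^i_z)^q \, \ext x \;\le\; C\Big(\beta^q + \int_{\Omega''} \sum_{j=1}^N |u_j|^q\Big)$$
for all $\rho\in(0,\rho_0]$, with $\rho_0 = \rho_0(n,k,q,\epsilon,\dist(\Omega',\Omega''))$, the range $\rho>\rho_0$ being handled trivially by comparison with $Q_i\llbracket 0\rrbracket$ exactly as at the close of the proof of Lemma~\ref{MSC}. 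This exhibits $u_i \in \CL^{q,\tilde\lambda}_k(\Omega';\A_{Q_i}(\R^{m_i}))$ with $\tilde\lambda = n + q(k+\tilde\mu)$; since $n+kq<\tilde\lambda<n+(k+1)q$, Theorem~\ref{thm:campanato} then gives $u_i \in C^{k,\tilde\mu}(\overline{\Omega'};\A_{Q_i}(\R^{m_i}))$ with the asserted estimate, identifying the exponent $\lambda$ of the statement with $\tilde\mu$.

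To prove the displayed decay I would run the argument of Lemma~\ref{MSC}, but with one extra intermediate layer of scales, organised by the position of $z$. If $z\in\Gamma$ the bound is immediate from (I), since each summand of $\sum_j\G(u_j,P^j_{x_0})^q$ is nonnegative. If $z\in\Gamma_i$ it is obtained as in Lemma~\ref{MSC}: iterate the contraction produced from (I) by a centre shift, in the form of \eqref{E:campanato1} (after fixing $\gamma$ small depending on $n,k,q,\beta_0,\mu,\epsilon$), for dyadic radii down to $\approx\dist(z,\Gamma)$, and for smaller radii apply (II) \emph{at $z$ itself} with comparison polynomial the $\Gamma$-layer polynomial $P^i_y\in\FP_0$ — an admissible choice precisely because (II) only demands $P\in\FP_0$. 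The genuinely new situation is $z\in\overline{\Omega'}\setminus(\Gamma\cup\Gamma_i)$ with $\dist(z,\Gamma_i)<\dist(z,\Gamma)$: here (III) at $z$ is only licensed for radii below $\dist(z,\Gamma\cup\Gamma_i)=\dist(z,\Gamma_i)$, so it handles only the smallest scales; for radii between $\dist(z,\Gamma_i)$ and $\dist(z,\Gamma)$ one instead invokes (II) at points of $\Gamma_i$ lying near $z$ (whose distance to $\Gamma$ is $\ge\dist(z,\Gamma)-\dist(z,\Gamma_i)>0$), comparing on the right against $P^i_y\in\FP_0$ at top radius $\approx\dist(z,\Gamma)$; and radii above $\dist(z,\Gamma)$ are governed, as before, by iterating \eqref{E:campanato1}. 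The remaining good-point case $\dist(z,\Gamma)\le\dist(z,\Gamma_i)$ is formally identical to Lemma~\ref{MSC} with (III) playing the role of (II), its range $\dist(z,\Gamma\cup\Gamma_i)=\dist(z,\Gamma)$ being exactly what is needed and $\FP_0\subset\FP_0\cup\FP_i$ making $P^i_y$ admissible. In every case, concatenating the per-layer power decays yields the displayed estimate for $\int_{B_\rho(z)\cap\Omega}\G(u_i,P^i_z)^q$ with $\tilde\mu$ the minimum of $\mu$ and finitely many $\log_\gamma(\tfrac14)$-type exponents (all in $(0,1)$ since $\gamma<\tfrac14$), provided that at each transition between layers the comparison polynomials are replaced by the single polynomial $P^i_z$. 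That replacement is carried out, exactly as in the passage from \eqref{E:campanato5} to \eqref{E:campanato6}, by observing that the two polynomials meeting at a transition are both $L^q$-close to $u_i$ on the ball of the corresponding radius, so Lemma~\ref{lemma:poly_estimate} controls their coefficient $\G$-distance and hence their sup-distance on all balls. The right-hand side $\beta^q+\int_{\Omega''}\sum_j|u_j|^q$ enters through the base-level integral $\int_{B_\gamma(y)\cap\Omega}\sum_j\G(u_j,P^j_y)^q$ (or $\int_{B_\epsilon(z)\cap\Omega}\sum_j\G(u_j,P^j_z)^q$ when $z\in\Gamma$), bounded by $C(\beta^q+\int_{\Omega''}\sum_j|u_j|^q)$ using $\sup_\Omega|P^j_y|\le\beta$ for $P^j_y\in\FP_0$.

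The main obstacle is bookkeeping rather than any new idea: one must track three interlocking families of scales and of comparison polynomials simultaneously, carry the centre shifts between $z$, a nearby point of $\Gamma_i$ and a nearby point of $\Gamma$ — absorbing the attendant factors $2^{n+kq}$ and $\big((\gamma\rho\pm|z-x_1|)/(\rho\mp|z-x_1|)\big)^{q\mu}$ into the iteration constants, which forces $\gamma$ to be chosen small quantitatively in $\beta_0,(\beta_i)_i,(\tilde\beta_i)_i,\mu$ — and, the one place where the hypotheses must be used exactly as stated, verify at each transition that the polynomial inherited from the deeper layer is a legitimate comparison polynomial for the shallower layer's inequality. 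That is guaranteed precisely because (II) is posed with comparison class $\FP_0$ (which contains the $\Gamma$-layer polynomials) and (III) with comparison class $\FP_0\cup\FP_i$ (which contains the $\Gamma_i$-layer polynomials), which is the structural reason the layered scheme is compatible with these restricted hypotheses. Beyond Lemma~\ref{lemma:poly_estimate} and Theorem~\ref{thm:campanato}, no further analytic input is required.
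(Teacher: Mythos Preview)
Your proposal is correct and follows essentially the same approach as the paper's proof: a layered iteration of the centre-shifted contraction inequality \eqref{E:campanato1} through the scales governed successively by (I), (II), and (III), with transitions between layers handled by Lemma~\ref{lemma:poly_estimate}, culminating in an appeal to Theorem~\ref{thm:campanato}. The paper organises the case analysis slightly differently---splitting according to whether $\dist(z,\Gamma_i)$ is at least $\gamma^{j_\Gamma+M}$ (i.e.\ comparable to $\dist(z,\Gamma)$ up to a fixed power of $\gamma$) rather than your direct comparison $\dist(z,\Gamma_i)\lessgtr\dist(z,\Gamma)$---but the content is the same, and you have correctly identified that the restricted comparison classes $\FP_0$ in (II) and $\FP_0\cup\FP_i$ in (III) are exactly what is needed for the inherited polynomials to be admissible at each transition.
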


\textbf{Remark:} The proof will follow in the same way as Lemma \ref{MSC}: essentially whenever a point $z\in \overline{\Omega'}\backslash (\Gamma\cup\Gamma_i)$ is either closer to $\Gamma$ than $\Gamma_i$, or the distances have the same order, we can complete the proof in a similar manner to Lemma \ref{MSC} using (I) and (III). The only issue comes when $\Gamma_i$ is much closer in order to $z$ than $\Gamma$, in which case we need to use (II) with (III). Then we can use (I) and (II) in the same manner as Lemma \ref{MSC} to complete the proof.

\textbf{Remark:} In our applications, we will have $\Omega = B_1(0)\cap H\subset\R^n$, where $H := \{x^1>0\}$ is a half-plane, and $\Omega' = B_{1/8}(0)\cap H$. $\Gamma$ will be a subset of $\del H$ of a priori \lq\lq bad'' points, but which we do have good integral estimates for uniformly lower bounded radii. The $\Gamma_i$ will then be the set of branch points of a multi-valued harmonic function: this \textit{will} include some boundary points which are branch points of an odd reflection of the function, which will exist so long as we have $C^1$ regularity on a neighbourhood of the boundary a priori. The condition on $\overline{\Gamma}_i$ is to ensure that $\Gamma\cup\Gamma_i$ is a closed set for each $i$, and so if $z\in \overline{\Omega'}\backslash (\Gamma\cup\Gamma_i)$, then $\dist(z,\Gamma\cup\Gamma_i)>0$: this prevents us having a point on $\del H$ in  the complement of $\Gamma\cup\Gamma_i$ which is a limit point of interior branch points.

\begin{proof}
	Note first that for each $z\in \Gamma_i\backslash \Gamma$ we may repeat the argument seen in the proof of Lemma \ref{MSC} (with only minor modifications to deal with the new form of (I)) to find that we can find $\tilde{\mu}$ such that for all $\rho>0$,
	\begin{equation}\label{E:campanato2.1}
	\rho^{-n-kq}\int_{B_\rho(z)\cap\Omega}\G(u_i,P^i_z)^q \leq C\rho^{q\tilde{\mu}}\left(\beta^q + \int_{\Omega''}\sum_i|u_i|^q\right)
	\end{equation}
	where $\tilde{\mu},C$ have all the allowed dependencies.
	
	Now fix $i\in\{1,\dotsc,N\}$. Suppose $z\in \overline{\Omega'}\backslash (\Gamma\cup\Gamma_i)$. Let $\gamma<\frac{1}{2}\min\{1/4,\dist(\Omega',\Omega'')\}$ to be chosen later and let $\rho\in (0,\epsilon)$. Note that if $\overline{B_{\gamma\rho}(z)}\cap \Gamma \neq\emptyset$, then in the same way as Lemma \ref{MSC}, choosing $y_\Gamma\in \Gamma$ with $\dist(z,\Gamma) = |z-y_\Gamma|$, we have for suitable $\gamma$,
	$$(\gamma\rho)^{-n-kq}\int_{B_{\gamma\rho}(z)\cap\Omega}\sum_j\G(u_j,P^j_{y_\Gamma})^q \leq \frac{1}{4}\cdot\rho^{-n-kq}\int_{B_\rho(y_\Gamma)\cap\Omega}\sum_j\G(u_j,P_{y_\Gamma}^j)^q.$$
	However, when $B_{\gamma\rho}(z)\cap \Gamma = \emptyset$, we cannot proceed as in the proof of Lemma \ref{MSC}, since we could have $\B_{\gamma\rho}(z)\cap\Gamma_i \neq\emptyset$, and so (III) is not applicable in the form we want, i.e. with a uniform lower bound on the radii we can apply it to.
		
	Assume now $\dist(z,\Gamma)<\gamma$: if this is not true then (i) if $\dist(z,\Gamma_i)<\gamma/4$, we can follow the proof of Lemma \ref{MSC} just with $\Gamma_i$ in place of $\Gamma$ to get the result at $z$, or (ii) If $\dist(z,\Gamma_i)\geq \gamma/4$, then we can just apply (III) to get the result at $z$. So choose $j_\Gamma$ minimal such that $\overline{B_{\gamma^{j_\Gamma+1}}(z)}\cap \Gamma = \emptyset$. Then in the same way as Lemma \ref{MSC}, for each $j=2,\dotsc,j_\Gamma$ we have (choosing $y_\Gamma$ as above and setting $P_\Gamma:= P_{y_\Gamma}$)
	\begin{equation}\label{E:campanato2.2}
	(\gamma^j)^{-n-kq}\int_{B_{\gamma^j}(z)\cap\Omega}\sum_\ell \G(u_\ell,P^\ell_{\Gamma})^q \leq \frac{1}{4}\cdot(\gamma^{j-1})^{-n-kq}\int_{B_{\gamma^{j-1}}(y_\Gamma)\cap\Omega}\sum_\ell \G(u_\ell,P^\ell_{\Gamma})^q.
	\end{equation}
	Now, if $\dist(z,\Gamma_i)\geq\gamma^{j_\Gamma+M}$, $M\geq 3$ to be chosen (dependent only on the allowed parameters), then we can still follow the proof of Lemma \ref{MSC} to get the result (as we get the same estimates on $P_i^z$ and $P_\Gamma^i$, and we have a large enough region around $z$ to apply (III)). So assume $\dist(z,\Gamma_i)<\gamma^{j_\Gamma+M}$, and let $y_*\in \Gamma_i$ be such that $|z-y_*| = \dist(z,\Gamma_i)$; note that since $z\not\in \Gamma_i\cup\Gamma = \overline{\Gamma\cup\Gamma_i}$, we have $\dist(z,\Gamma_i)>0$. So let $j_*\in\{0,1,2,\dotsc\}$ be minimal such that $\overline{B_{\gamma^{j_\Gamma+M+j_*+1}}(z)}\cap\Gamma_i = \emptyset$.
	
	We know that $\dist(y_i,\Gamma)> \gamma^{j_\Gamma+1}-\gamma^{j_\Gamma+M} \geq \gamma^{j_\Gamma+1}(1-\gamma^2)$, meaning that we can apply (II) for all $0<\sigma\leq\rho/2\leq \frac{1}{2}\gamma^{j_\Gamma+1}(1-\gamma^2)$. Thus, since $B_{\rho}(z)\subset B_{\rho+\gamma^{j_\Gamma+M}}(y_*)$, for $j=2,\dotsc,M+j_*-1$, we have from (II), in the same way as (\ref{E:campanato1}), for suitable $\gamma$,
	\begin{equation}
		(\gamma^{j_\Gamma+j+1})^{-n-kq}\int_{B_{\gamma^{j_\Gamma+j+1}}(z)\cap\Omega}\G(u_i,P^i_{y_*})^q \leq \frac{1}{4}\cdot(\gamma^{j_\Gamma+j})^{-n-kq}\int_{B_{\gamma^{j_\Gamma+j}}(y_*)\cap\Omega}\G(u_i,P^i_{y_*})^q.
	\end{equation}
	Now since $\overline{B_{\gamma^{j_\Gamma+M+j_*+1}}(z)}\cap\Gamma_i = \emptyset$, we can apply (III) to get that, for any $\sigma\in (0,1/2]$,
	$$\hspace{-2.5em}(\sigma\gamma^{j_\Gamma+M+j_*+1})^{-n-kq}\int_{B_{\sigma\gamma^{j_\Gamma+M+j_*-1}}(z)\cap\Omega}\G(u_i,P_z^i)^q \leq \tilde{\beta}_i\sigma^{q\mu}(\gamma^{j_\Gamma+M+j_*+1})^{-n-kq}\int_{B_{\gamma^{j_\Gamma+M+1+j_*}}(z)\cap\Omega}\G(u,P)^q.$$
	Now following the proof of Lemma \ref{MSC}, we get that for each $j=2,\dotsc,M+j_*-1$,
	\begin{equation*}
		(\gamma^{j_\Gamma+j+1})^{-n-kq}\int_{B_{\gamma^{j_\Gamma+j+1}}(z)\cap\Omega}\G(u_i,P^i_z)^q \leq C4^{-j}(\gamma^{j_\Gamma+2})^{-n-kq}\int_{B_{\gamma^{j_\Gamma+2}}(y_*)\cap\Omega}\G(u_i,P_{y_*}^i)^q.
	\end{equation*}
	Hence following the proof of Lemma \ref{MSC} again, we see that for any $\rho<\frac{1}{2}\gamma^{j_\Gamma+M+j_*+1}$ we have for some $\mu'\in (0,1)$,
	$$\rho^{-n-kq}\int_{B_\rho(z)\cap\Omega}\G(u_i,P^i_z)^q \leq C\rho^{q\mu'}\cdot(\gamma^{j_\Gamma})^{-n-kq-q\mu'}\int_{B_{\gamma^{j_\Gamma}}(y_*)\cap\Omega}\G(u^i,P^i_{y_*})^q.$$
	Also, if $\gamma^{j_\Gamma+j+1}<2\rho\leq \gamma^{j_\Gamma+j}$ holds for some $j=3,\dotsc,M+j_*-1$, then again we have the same inequality, e.g. if $M=4$, we see that for any $\rho<\frac{1}{2}\gamma^{j_\Gamma+3}$ we have
	$$\rho^{-n-kq-q\mu'}\int_{B_\rho(z)\cap\Omega}\G(u_i,P_z^i)^q \leq C(\gamma^{j_\Gamma})^{-n-kq-q\mu'}\int_{B_{\gamma^{j_\Gamma}}(y_*)\cap\Omega}\G(u^i,P_{y_*}^i)^q$$
	from which the desired bound (for a suitable exponent) follows from (\ref{E:campanato2.1}). 
	
	Now suppose $\rho\geq \frac{1}{2}\gamma^{j_\Gamma+3}$. Then we simply have from inclusions, since $|z-y_*| \leq \gamma^{j_\Gamma+4}<\rho$,
	$$\rho^{-n-kq-q\mu'}\int_{B_\rho(z)}\G(u_i,P_{y_*}^i)^q \leq 2^{n+kq+q\mu'}\left(\rho+|z-y_*|\right)^{-n-kq-q\mu'}\int_{B_{\rho+|z-y_*|}(y_*)\cap\Omega}\G(u_i,P_{y_*}^i)^q$$
	which then applying (\ref{E:campanato2.1}) tells us that, for every $\rho>0$,
	$$\rho^{-n-kq-q\mu'}\inf_{P\in\CP_k}\int_{B_\rho(z)}\G(u_i,P)^q \leq C\left(\beta^q+\int_{\Omega''}\sum_i|u_i|^q\right)$$
	and so since $z\in \overline{\Omega'}\backslash(\Gamma\cup\Gamma_i)$ as arbitrary, and since this inequality has already been established for $z\in \overline{\Omega'}\cap (\Gamma\cup\Gamma_i)$, we see that $u_i\in \CL^{\lambda,q}_k(\Omega';\A_{Q_i}(\R^{m_i})$, as so applying Theorem \ref{thm:campanato} we are done.
\end{proof}

\section{Applications to Minimal Submanifold Theory}\label{sec:minimal_surface}

In this section we will apply the results of Section \ref{sec:campanato} to classes of functions which arise naturally in the regularity theory of minimal submanifolds, known as \textit{blow-up classes}. We recommend the reader interested in this connection to consult the sources \cite{simoncylindrical}, \cite{wickstable}, \cite{beckerkahn}, \cite{minterwick}, \cite{minter}, and \cite{beckerwick}.

Throughout this section, we will only be interested in \textit{boundary} regularity results, and in particular will only be interested in the domain which is a $n$-dimensional half-ball, that is, $\Omega = B_1^{n+k}(0)\cap H\subset\R^{n+k}$, where $H:= \{x\in \R^{n+k}:x^1>0,\, x^{n+1}=\cdots= x^{n+k}=0\}$ is an $n$-dimensional half-plane in $\R^{n+k}$. The regularity theory of functions defined on such a half-ball arises in the situation when considering blow-ups of sequences of stationary integral varifolds converging to a fixed non-flat stationary integral cone which is itself supported on a union of half-planes meeting along a common axis (known as a \textit{classical cone} -- see \cite{minter} for a discussion in the codimension one setting).

\textbf{Remark:} When the codimension of the varifolds is $>1$, or the codimension is $1$ and the multiplicity of some half-(hyper)plane in the cone is at least $3$, it is in general not possible to guarantee that the varifolds, and thus the functions in the corresponding blow-up class, as $C^{1,\alpha}$ in the classical multi-valued sense; this is due to the existence of stationary integral varifolds which are only Lipschitz regular. There is however a slightly weaker notion that $C^{1,\alpha}$ as a multi-valued function, known as \textit{generalised}-$C^{1,\alpha}$ multi-valued function, which one can make use of in some instances in this setting (see \cite{minterwick}).

\textbf{Remark:} When one performs a blow-up procedure of a sequence of stationary integral varifolds relative to a fixed higher multiplicity (i.e. $>1$) plane as opposed to a non-flat union of half-planes, the domain of definition of the functions in the blow-up class is not the half-ball $B_1(0)\cap H$, but instead the full ball, $B_1(0)\cap \{x\in\R^{n+k}:x^{n+1}=\cdots=x^{n+k}=0\}$; such a situation arises when studying interior branch points of stationary integral varifolds. In this setting, the main difficulty is in establishing the \textit{interior} regularity of functions in the blow-up class (for instances where this is achieved, see \cite{wickstable}, \cite{minterwick}). In the setting described above, namely blow-ups relative to \textit{non-flat} cones, one typically already has an interior regularity statement for the blow-ups (provided from the planar blow-up case) and thus the primary difficulty is in establishing the boundary regularity of functions in the blow-up class; this latter question is our focus here.

\subsection{Multi-Valued Harmonic Functions}

Fix $U\subset\R^n$ an open set and $u\in C^1(U;\A_q(\R^k))$. We briefly recall some key notions for $q$-valued functions and $q$-valued harmonic functions (see also \cite{simonwick} for the case $q=2$).

\begin{defn}
	The branch set of $u$, denoted $\mathcal{B}_u$, is the set of points $x\in U$ for which there is no $\rho>0$ such that on $B_\rho(x)\subset U$, we have $u|_{B_\rho(x)} = \sum^q_{i=1}\llbracket u_i\rrbracket$ for some single-valued $C^1$ functions $u_i:B_\rho(x)\cap U\to \R^k$.
\end{defn}

\begin{defn}
	We say $u$ is \textit{harmonic} if for each $x\in U\setminus \B_u$, there is $\rho>0$ with $B_\rho(x)\subset U$ and the property that $u|_{B_\rho(x)} = \sum^q_{i=1}\llbracket u_i\rrbracket$ for some single-valued $C^1$ harmonic functions $u_i:B_\rho(x)\to \R^k$.
\end{defn}

In the case $q=2$, from \cite{simonwick} we know that if $u\in C^{1,\alpha}(U;\A_2(\R^k))$ is harmonic, then one may show:
\begin{itemize}
	\item the frequency function, $\rho\mapsto \frac{\rho\int_{B_\rho(x)}|Du|^2}{\int_{\del B_\rho(x)}|u|}$, is monotone increasing in $\rho>0$ when it is defined;
	\item $u\in C^{1,1/2}(U;\A_2(\R^k))$;
	\item the branch set $\B_u$ obeys $\dim_\H(\B_u)\leq n-2$ (in fact, one may show that $\B_u$ is countably $(n-2)$-rectifiable -- see \cite{krummelwick1}).
\end{itemize}
When $q\geq 3$, it is natural conjecture that the same result holds, except with the optimal regularity conclusion being $u\in C^{1,1/q}(U;\A_q(\R^k))$ (a key difference between these cases being that branch points where only some of the values coincide can limit onto branch points where all the follows coincide). We therefore make the following natural definition:
\begin{defn}\label{good}
	We say that a $q$-valued harmonic function $u:U\to \A_q(\R^k)$ is a \textit{good} $q$-valued harmonic function if the following conditions hold:
	\begin{itemize}
		\item $u\in C^{1,1/q}(U;\A_q(\R^k))$;
		\item $\dim_\H(\B_u)\leq n-2$;
		\item for each $x\in \B_u$ and every $0<\sigma\leq \rho<\dist(x,\del U)$, we have:
		$$\sigma^{-n}\int_{B_\sigma(x)}|u_s|^2\leq \left(\frac{\sigma}{\rho}\right)^{2(1+1/q)}\cdot\rho^{-n}\int_{B_\rho(x)}|u_s|^2$$
		where $u_s:= u-u_a$ is the symmetric part of $u$ and $u_a:= q^{-1}\sum^q_{i=1}u_i$ is its average-part.
	\end{itemize}
\end{defn}
In particular, good $q$-valued harmonic functions always have that $u_a$ is a smooth single-valued harmonic function $U\to \R^k$. As we remarked above, we already know by \cite{simonwick} that every $C^{1,\alpha}$ 2-valued harmonic function is a good $2$-valued harmonic function in the above sense (and for the purposes of the application to \cite{minterwick}, this is all that is needed).

Let us now collect a few basic properties of good $q$-valued harmonic functions which we will need later:

\begin{lemma}[Unique Continuation]\label{UC}
	Suppose $U\subset\R^n$ is open and that $u_1,u_2\in C^{1,1/q}(U;\A_q(\R^k))$ are good $q$-valued harmonic functions. Then if $u_1|_V\equiv u_2|_V$ for some open $V\subset U$, then we have $u_1\equiv u_2$ on $U$.
\end{lemma}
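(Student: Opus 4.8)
The plan is to reduce the multi-valued unique continuation statement to the classical single-valued one by working away from the branch sets, and then to use the size estimate $\dim_\H(\B_{u_i})\le n-2$ together with connectedness of $U$ to propagate the identity across the whole domain. First I would introduce the coincidence set $S:=\{x\in U: u_1 \text{ and } u_2 \text{ agree to infinite order at }x\}$; more precisely, since both $u_i$ are $C^{1,1/q}$, I would instead work with the open set $W:=\{x\in U: u_1\equiv u_2 \text{ on some neighbourhood of }x\}$, which is nonempty since it contains $V$. The goal is to show $W=U$; since $U$ is connected it suffices to show $W$ is closed in $U$ (it is open by definition).

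The key step is the local argument near a boundary point $x_0\in \partial W\cap U$. Near such a point, pick a ball $B=B_\rho(x_0)\subset U$. Away from the (closed, $\H^{n-2}$-small) sets $\B_{u_1}\cup\B_{u_2}$, each $u_i$ decomposes locally as a sum of single-valued harmonic sheets. On the open connected set $B\setminus(\B_{u_1}\cup\B_{u_2})$ — which is connected precisely because $\B_{u_1}\cup\B_{u_2}$ has Hausdorff dimension at most $n-2$, hence does not locally disconnect $\R^n$ — I would show the sheet-wise identity $u_1=u_2$ propagates: on the intersection of $B\setminus(\B_{u_1}\cup\B_{u_2})$ with $W$ (which is nonempty since $x_0\in\partial W$) the two functions agree, so in a neighbourhood of any point of that intersection one has matching collections of single-valued harmonic branches, and classical unique continuation for single-valued harmonic functions forces the corresponding branches to agree along any path in $B\setminus(\B_{u_1}\cup\B_{u_2})$. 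Hence $u_1=u_2$ on all of $B\setminus(\B_{u_1}\cup\B_{u_2})$, and by continuity ($u_i\in C^{1,1/q}(B)$) on all of $B$. This shows $x_0\in W$, so $W$ is closed; combined with connectedness of $U$ we conclude $W=U$, i.e. $u_1\equiv u_2$.

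I would need to be slightly careful about the matching of branches at a point of $W\cap(B\setminus(\B_{u_1}\cup\B_{u_2}))$: there $u_1=u_2$ as elements of $\A_q(\R^k)$ on a whole neighbourhood, so the unordered $q$-tuples of local harmonic sheets of $u_1$ and of $u_2$ coincide as unordered tuples on that neighbourhood; after relabelling we may assume they coincide as ordered tuples, and then single-valued unique continuation applies sheet by sheet along any continuous path in the connected open set $B\setminus(\B_{u_1}\cup\B_{u_2})$, with the relabelling being consistent by continuity since sheets never collide on that set. The main obstacle is this topological input: ensuring $B\setminus(\B_{u_1}\cup\B_{u_2})$ is connected and that branch labelling extends consistently along paths. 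This is exactly where the hypothesis $\dim_\H(\B_{u_i})\le n-2$ is essential — a closed set of Hausdorff dimension at most $n-2$ in $\R^n$ cannot locally separate, so its complement in a ball is connected (indeed locally connected), which legitimises the path-continuation of branches. The estimate on $u_s$ near $\B_u$ from Definition \ref{good} is not needed for this particular lemma; only the regularity and the dimension bound on the branch set are used.
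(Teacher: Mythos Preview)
Your approach is essentially the same as the paper's: both proofs use the codimension-two bound $\dim_\H(\B_{u_i})\le n-2$ to ensure the complement of the combined branch set is connected, and then reduce to the classical unique continuation principle for single-valued harmonic functions. You package this as an open-closed argument on the coincidence set $W$, whereas the paper argues directly by taking a path in $U\setminus(\B_{u_1}\cup\B_{u_2})$ from a point of $V$ to an arbitrary point, and restricting both functions to a contractible tubular neighbourhood $B_r(\gamma([0,1]))$ of that path on which the branch sets are empty.

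One small imprecision in your write-up: the claim that ``sheets never collide'' on $B\setminus(\B_{u_1}\cup\B_{u_2})$ is not correct. Sheets may well coincide at points outside the branch set (e.g.\ two linear sheets crossing transversally), so the consistency of the branch labelling along a path is not simply ``by continuity''. One clean fix is a Baire-type observation: on any small ball in $W$ where both functions decompose as $u_1=\sum_i\llbracket f_i\rrbracket$, $u_2=\sum_i\llbracket g_i\rrbracket$, the closed sets $A_\sigma=\{x:f_i(x)=g_{\sigma(i)}(x)\ \forall i\}$ cover the ball, so some $A_\sigma$ has interior, and then single-valued unique continuation gives $f_i\equiv g_{\sigma(i)}$ on the whole connected ball. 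Iterating along a finite chain of overlapping balls then propagates $u_1=u_2$ as $\A_q$-valued functions along the path, with no need for a globally consistent labelling. The paper's tube argument sidesteps this issue entirely: the tube is contractible, so once the branch sets are empty on it, a global single-valued decomposition exists and the reduction is immediate. You also correctly note that the decay estimate on $u_s$ in Definition~\ref{good} plays no role here.
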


\begin{proof}
	By definition we know that $B:= \B_{u_1}\cup \B_{u_2}$ has $\dim_\H(B)\leq n-2$ and that $B\subset U$ is closed. In particular $U\backslash B$ is open and connected, and thus is path-connected. It suffices to show that $u_1\equiv u_2$ on $U\backslash B$ since both are continuous on $U$ and $U\cap \overline{U\backslash B} = U$. Thus suppose there is some $x\in U\backslash B$ for which $u_1(x)\neq u_2(x)$. We may then find a $x_0\in V\backslash B$ and a path $\gamma:[0,1]\to U\backslash B$ with $\gamma(0) = x_0$ and $\gamma(1) = x$. Since the image of $\gamma$ is compact, and thus closed, we may find some $r>0$ for which $B_r(\gamma([0,1]))\subset U\backslash B$. Restricting our functions now to $B_r(\gamma([0,1]))$, we can therefore reduce the lemma to proving the case where $\B_{u_1}\cup \B_{u_2} = \emptyset$.
	
	So therefore we may assume that $u_1,u_2:U\to \A_q(\R^k)$ have $u_1\equiv u_2$ on some open set and $\B_{u_1},\B_{u_2} = \emptyset$. But now the result follows immediately from the unique continuation principle for single-valued harmonic functions.
\end{proof}

We will also need a classification of $q$-valued harmonic functions which are $C^1$ and homogeneous of degree one. It is known in the case $q=2$ that every such $2$-valued harmonic function must be given by two linear functions (see \cite[Lemma 2.5]{simonwick}). Once again, it is natural to conjecture that such a result holds for general $q\geq 3$, however for now this is still an open question. We therefore make the following assumption:

\textbf{Classification Hypothesis:} \textit{Every $q$-valued symmetric harmonic function $u:\R^N\backslash\{0\}\to \A_q(\R^k)$ which is $C^1$ and homogeneous of degree one is necessarily linear (for $N\geq 2$).}

Note that we do not make any assumption on whether the harmonic function is good or not, in the sense of Definition \ref{good}; this will be necessary for our proof later. We do note that one can readily prove that the classification hypothesis holds under an inductive assumption; in particular, as this inductive assumption is true for $q=2$ (from \cite{simonwick} and \cite{krummelwick1}), we can prove the classification hypothesis for $q=3$.

\begin{lemma}
	Fix $q\in \{3,4,\dotsc\}$. Suppose that for $Q=2,3,\dotsc,q-1$, every $C^1$, homogeneous degree one, $Q$-valued harmonic function $v$ on some open subset of $\R^n$ obeys that $\B_v$ is countably $(n-2)$-rectifiable. Then, every $q$-valued symmetric harmonic function $u:\R^n\to \A_q(\R^k)$ which is $C^1$ on $\R^n\backslash\{0\}$, $n\geq 2$, and homogeneous of degree one, is necessarily comprised of $q$ single-valued linear functions.
\end{lemma}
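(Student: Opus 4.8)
The plan is to show first that the branch set $\B_u$ is countably $(n-2)$-rectifiable, then to use this together with a monodromy decomposition to reduce to the case where $u$ is \emph{irreducible} -- its $q$ branches permuted transitively by monodromy -- and finally to rule that case out. We use throughout that, since $u$ is homogeneous of degree one, it is continuous on $\R^n$ with $u(0)=q\llbracket 0\rrbracket$, is determined by $u|_{S^{n-1}}$, has $Du$ continuous and homogeneous of degree zero on $\R^n\setminus\{0\}$, and has $\B_u$ a closed cone; and that the conclusion of the lemma is already available in every multiplicity $Q<q$, since the inductive hypothesis, assumed below $q$, holds a fortiori below any such $Q$.

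\emph{Rectifiability of $\B_u$.} Set $Z:=\{x:u(x)=q\llbracket 0\rrbracket\}$, a closed cone which, by symmetry, is exactly the set where all $q$ branches of $u$ coincide; by symmetry again, if all the $Du_i$ also agree at a point of $Z$ then their common value is zero. Near a point of $S^{n-1}\setminus Z$, the branches separate into at least two clusters, so on a small open cone about the ray through it $u$ is a sum of homogeneous-degree-one $C^1$ harmonic pieces of multiplicity strictly less than $q$; by the inductive hypothesis the branch sets of these pieces -- and hence $\B_u$ there -- are countably $(n-2)$-rectifiable. Near a point of $Z\setminus\{0\}$ where not all the $Du_i$ agree, the same holds after grouping branches by the value of $Du$ rather than of $u$ (legitimate, since $Du$ is continuous and homogeneous of degree zero). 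The remaining, degenerate, stratum -- points of $Z\setminus\{0\}$ at which all branches and all their gradients agree, so that after subtracting the common linear part $u$ vanishes to second order along the whole ray -- is the crux. The plan for it is to exploit that $|Du|^2=\sum_i|Du_i|^2$ is nonnegative, continuous, homogeneous of degree zero, with restriction to $S^{n-1}$ subharmonic off $\B_u\cap S^{n-1}$, together with a dimension-reduction induction on $n$: once the degenerate stratum is seen to be negligible and $\B_u\cap S^{n-1}$ to be removable for bounded subharmonic functions on the closed manifold $S^{n-1}$, the restriction of $|Du|^2$ extends to a subharmonic function there, hence is constant; vanishing on the stratum, it vanishes identically, so $u\equiv q\llbracket 0\rrbracket$ -- a (trivial) sum of $q$ linear functions. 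So we may assume from now on that $u$ is not identically $q\llbracket 0\rrbracket$ and that $\B_u$ is closed and countably $(n-2)$-rectifiable, hence $\sigma$-finite for $\H^{n-2}$ and so removable for bounded harmonic functions.

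\emph{Reduction to the irreducible case.} Decompose the branches of $u$ into the orbits $O$ of the monodromy action of $\pi_1(\R^n\setminus(\{0\}\cup\B_u))$. For each orbit, $\sum_{i\in O}u_i$ is single-valued, harmonic, homogeneous of degree one and locally bounded, hence extends across $\B_u$ and the point singularity at $0$ (using $|u_i(x)|=O(|x|)=o(|x|^{2-n})$ when $n\geq 3$ and the form $rY(\theta)$, $Y''+Y=0$ when $n=2$) to a homogeneous-degree-one harmonic function on $\R^n$, which is linear. Moreover $v_O:=\sum_{i\in O}\llbracket u_i\rrbracket$ is a well-defined $|O|$-valued continuous function on $\R^n$, $C^1$ on $\R^n\setminus\{0\}$, harmonic and homogeneous of degree one, with branch set contained in $\B_u$; if $|O|<q$ then $v_O$ minus $|O|^{-1}\sum_{i\in O}u_i$ is symmetric, so by the lemma in multiplicity $|O|<q$ it is a sum of $|O|$ single-valued linear functions, whence $v_O$ is a sum of $|O|$ linear functions, so $\B_{v_O}=\emptyset$ and $|O|=1$. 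Thus either all orbits are singletons -- so the monodromy is trivial, $u=\sum_i\llbracket u_i\rrbracket$ globally with each $u_i$ (now harmonic on all of $\R^n$ since $\B_u$ is removable) homogeneous of degree one, hence linear, and we are done -- or $u$ is irreducible.

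\emph{The irreducible case.} It remains to show that an irreducible such $u$, which necessarily has $\B_u$ a nonempty $(n-2)$-rectifiable cone, cannot exist. I expect this to be the main difficulty. The plan is to revisit a point $p\in\B_u\cap S^{n-1}$: if not all branches coincide at $p$, then on a small cone about its ray $u$ splits into homogeneous-degree-one $C^1$ harmonic pieces of multiplicity $<q$, whose analytic continuation, sheet-by-sheet, must by irreducibility sweep out all of $u$ -- and one aims to iterate the decomposition, using the inductive hypothesis and a unique-continuation argument of the type in Lemma~\ref{UC} (which, once $\dim_\H\B_u\leq n-2$ is known, requires no assumption that $u$ be good), until one reaches either a contradiction or the situation $p\in Z$ already handled above; while if all branches coincide at $p$, then $p\in Z$ and the degenerate-stratum analysis of the first step applies. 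Assembling the cases, no irreducible $u$ with nontrivial branching exists, and therefore $u$ is comprised of $q$ single-valued linear functions, as claimed.
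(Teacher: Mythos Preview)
Your strategy has two genuine gaps, and both are avoided entirely by the paper's much shorter argument.

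First, the treatment of the degenerate stratum $\tilde{\CK}_u:=\{x:u(x)=q\llbracket 0\rrbracket,\ Du(x)=q\llbracket 0\rrbracket\}$ is circular. You propose to show that $|Du|^2$ extends to a subharmonic function on all of $S^{n-1}$ and then apply the maximum principle; but $|Du|^2$ is only known to be subharmonic on $S^{n-1}\setminus\B_u$, and $\B_u$ includes $\tilde{\CK}_u$, whose size you have not controlled. You write ``once the degenerate stratum is seen to be negligible\ldots'' without providing any mechanism for this, and the phrase ``dimension-reduction induction on $n$'' is not fleshed out (nor is it clear what tangent object one would pass to, given only $C^1$ regularity). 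Second, the irreducible case is left essentially open: you say ``I expect this to be the main difficulty'' and offer only a sketch of iterated local decompositions without explaining how the iteration terminates or yields a contradiction.

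The paper bypasses both issues with a single device, following \cite[Lemma 2.5]{simonwick}. One never needs to bound $\tilde{\CK}_u$: instead, for each $j$ one tests with $f_\delta:=\gamma_\delta(D_j u)$, where $\gamma_\delta:\R\to\R$ is odd, convex on $[0,\infty)$, identically zero on a neighbourhood of $0$, and with $\gamma_\delta'\equiv 1$ on $[\delta,\infty)$. Because $Du=q\llbracket 0\rrbracket$ on $\tilde{\CK}_u$, the function $f_\delta$ automatically vanishes on a neighbourhood of $\tilde{\CK}_u\cap S^{n-1}$, so one only needs a capacity cut-off $\eta_\epsilon$ for $(\B_u\setminus\tilde{\CK}_u)\cap S^{n-1}$, which \emph{is} countably $(n-2)$-rectifiable by the inductive hypothesis (since off $\tilde{\CK}_u$ the sheets split into lower-multiplicity pieces). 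Integration by parts on $S^{n-1}$ together with $\Delta_{S^{n-1}}(D_j u)=0$ (as $D_ju$ is harmonic and homogeneous of degree zero off the branch set) gives $\int_{S^{n-1}}|\nabla_{S^{n-1}}\gamma_\delta(D_j u)|^2=0$ after sending $\epsilon\to 0$ then $\delta\to 0$. Thus each $D_j u_i$ is constant on $S^{n-1}$, and $u$ is linear. No monodromy decomposition, no irreducibility argument, and no analysis of $\tilde{\CK}_u$ itself are needed.
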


\begin{proof}
By working with components, it suffices to prove the case when $k=1$. Set $\tilde{\CK}_u:=\{x\in \R^n:u(x) = q\llbracket 0\rrbracket \text{ and }Du(x) = q\llbracket 0\rrbracket\}\subset \CK_u$. For each point $x\in \R^n\backslash\tilde{\CK}_u$ one can find $\rho>0$ such that on $B_\rho(x)$ we can write $u$ as a sum of two multi-valued harmonic functions, both with $<q$ values. Moreover, by assumption we have that $\B_u\backslash \tilde{\CK}_u$ is countably $(n-2)$-rectifiable. In particular, it has vanishing $2$-capacity, and so for each $\epsilon>0$ we may find a cut-off function $\eta_\epsilon\in C^1_c(\R^n)$ such that $\eta_\epsilon\equiv 1$ on an $\epsilon$-neighbourhood of the multiplicity $<q$ branch set, $0\leq \eta_\epsilon\leq 1$, $\eta_\epsilon\to 0$ $\H^n$-a.e. as $\epsilon\to 0$, and $\int_{\R^n}|D\eta_\epsilon|^2\leq \epsilon^2$.

Now, following the proof of \cite[Lemma 2.5]{simonwick}, for any $\delta>0$ choose a smooth function $\gamma_\delta:\R\to \R$ which is an odd function, convex for $t\geq 0$, has $\gamma_\delta\equiv 0$ on some neighbourhood of $0$, and $\gamma_\delta'\equiv 1$ for all $t\geq \delta$. Then notice that, if we write $f_\delta:= \gamma_\delta(D_ju_i)$, then $f_\delta$ has the property that it has compact support in $S^{n-1}\backslash\tilde{\CK}_u$. In particular, by the same argument as in \cite[Lemma 2.5]{simonwick}, we have:
$$\int_{S^{n-1}}\eta_{\epsilon}\cdot|\nabla_{S^{n-1}}\gamma_\delta(D_ju_i)|^2 \leq -\int_{S^{n-1}}\eta_\epsilon\cdot\left[\Delta_{S^{n-1}}(D_j u_i)\right]\gamma_\delta(D_ju_i) + o(1) = o(1)$$
where $o(1)$ represents a term which $\to 0$ as $\epsilon\to 0$; the last equality comes from the fact that $Du$ is homogeneous of degree one. Thus, taking $\epsilon\to 0$, followed by $\delta\to 0$, we see that for each $i,j$ that $D_ju_i$ is constant; thus $u$ is given by $q$ single-valued linear functions.
\end{proof}

\subsection{Blow-Up Classes}\label{sec:blow-up}

Throughout this section, let us fix $N\in \{1,2,\dotsc\}$ and positive integers $(q_i)^N_{i=1}\subset\{1,2,\dotsc\}$; write $q:= (q_1,\dotsc,q_N)$. Let us now fix $\Omega:= B_1^{n+k}(0)\cap H$.

\begin{defn}\label{def:blow-up}
	We say that $\FB_q(\Omega)$ is a (\textit{proper}, \textit{half-plane}) \textit{blow-up class} if it obeys the following properties:
	\begin{enumerate}
	\item [($\FB1)$] Each element $v\in \FB_q(\Omega)$ is of the form $v = (v^1,\dotsc,v^N)$ with $v^i\in L^2(\Omega;\A_{q_i}(\R^k))\cap W^{1,2}_{\text{loc}}(\Omega;\A_{q_i}(\R^k))$, for each $i\in \{1,\dotsc,N\}$;
	\item [$(\FB2)$] (Interior Regularity.) If $v\in \FB_q(\Omega)$, then for $i=1,\dotsc,N$, $v^i:\Omega\to \A_{q_i}(\R^k)$ is a good $q_i$-valued harmonic function;
	\item [$(\FB3)$] (Boundary Estimates.) If $v\in \FB_q(\Omega)$ and $z\in B_1(0)\cap\del H$, then for each $\rho\in(0,\frac{3}{8}(1-|z|)]$ we have:
	$$\int_{B_{\rho/2}(z)\cap \Omega}\sum^N_{i=1}\frac{|v^i(x)-\kappa^i(z)|}{|x-z|^{n+3/2}}\ \ext x \leq C_1\rho^{-n-3/2}\int_{B_\rho(z)\cap \Omega}\sum^N_{i=1}|v^i(x)-\kappa^i(z)|^2\ \ext x$$
	where $\kappa: B_1(0)\cap \del H\to (\R^k)^N$ is a smooth, single-valued function, which obeys:
	$$\sup_{B_{5/16}(0)\cap \del H}|\kappa|^2\leq C_1\int_{B_{1/2}(0)\cap\Omega}|v|^2;$$
	\item [$(\FB4)$] (Hardt-Simon Inequality.) If $v\in \FB_q(\Omega)$ and $z\in B_1(0)\cap \del H$, then for each $\rho\in (0,\frac{3}{8}(1-|z|)]$ we have:
	$$\int_{B_{\rho/2}(z)\cap \Omega}\sum^N_{i=1}R_z^{2-n}\left(\frac{\del}{\del R_z}\left(\frac{v^i-v^i_a(z)}{R_z}\right)\right)^2 \leq C_1\rho^{-n-2}\int_{B_\rho(z)\cap \Omega}\sum^N_{i=1}|v^i-\ell_{v^i,z}|^2,$$
	where $R_z(x):= |x-z|$ and $\ell_{v^i,z}(x):= v^i_a(z) + (x-z)\cdot Dv^i_a(z)$;
	\item [($\FB5)$)] (Closure Properties.) If $v\in \FB_q(\Omega)$, then:
		\begin{enumerate}
			\item [$(\FB5\text{I})$] $v_{z,\sigma}(\cdot):=\|v(z+\sigma(\cdot))\|^{-1}_{L^2(\Omega)}v(z+\sigma(\cdot))\in \FB_q(\Omega)$ for each $z\in B_1(0)\cap \del H$ and each $\sigma\in (0,\frac{3}{8}(1-|z|)]$, whenever $v\not\equiv 0$ in $B_\sigma(z)\cap \Omega$;
			\item [$(\FB5\text{II}$)] $\|v-\ell_v\|^{-1}_{L^2(\Omega)}(v-\ell_v)\in \FB_q(\Omega)$ whenever $v-\ell_v\not\equiv 0$ in $\Omega$, where $\ell_v = (\ell_{v^1,0},\dotsc,\ell_{v^N,0})$;
		\end{enumerate}
	\item [$(\FB6)$] (Compactness Property.) If $(v_m)_m\subset\FB_q(\Omega)$, then there is a subsequence $(m')\subset (m)$ and a function $v\in \FB_q(\Omega)$ such that $v_{m'}\to v$ strongly in $L^2_{\text{loc}}(B_1(0)\cap\overline{H})$ and weakly in $W^{1,2}_{\text{loc}}(\Omega)$;
	\item [($\FB7)$] ($\epsilon$-Regularity Property.) There exists $\alpha_1 = \alpha_1(\B_q(\Omega))\in (0,1)$ and $\epsilon = \epsilon(\FB_q(\Omega))\in (0,1)$ such that the following is true: whenever $v\in \FB_q(\Omega)$ has $v^i_a(0) = 0$ and $Dv^i_a(0) = 0$ for each $i=1,\dotsc,N$, and $\|v\|_{L^2(\Omega)}=1$, and
	$$\int_{\Omega}\G(v,v_*)^2 < \epsilon$$
	for some linear function $v_* = (v^*_1,\dotsc,v^*_N)$ which is such that, for each $i$, $v^i_*:H\to \A_{q_i}(H^\perp)$ is of the form $v_*^i(x^1,\dotsc,x^n) = \sum_{j=1}^{q_i}\llbracket a^i_j x^1\rrbracket$ for some $a^i_j\in \R$ such that $\sum_j a^i_j = 0$ for each $i$, and moreover that for some $i_*\in \{1,\dotsc,N\}$ we have $q_{i_*}>1$ and $v^{i_*}_*\not\equiv 0$, then we have $\left.v\right|_{B_{1/2}(0)\cap H} \in C^{1,\alpha_1}(\overline{B_{1/2}(0)\cap H})$.
	\end{enumerate}
	Here, $C_1 = C_1(\FB_q(\Omega))\in (0,\infty)$ is a fixed constant (and so is independent of $v\in \FB_q(\Omega)$).
\end{defn}

\textbf{Remark:} A class of functions obeying these properties (or closely related properties, as one in general cannot expect $C^{1,\alpha}$ regularity in the interior, but instead $GC^{1,\alpha}$; see \cite{minterwick}) is expected to arise when performing either: (i) a coarse blow-up procedure of a sequence of certain stationary integral varifolds converging to a non-flat stationary classical cone (see \cite{minter}); or (ii) a type of fine blow-up procedure of a sequence of certain stationary integral varifolds converging to a plane of multiplicity $>1$ (see \cite{wickstable}) or a non-flat stationary classical cone where at least one half-plane in the cone has multiplicity (see \cite{minter}). In each case, as the multiplicity of each half-plane in the nearby classical cones is strictly less than the density of the vertex of the cone, one expects the above properties to hold when one has both suitable interior regularity theorems for varifolds near to lower multiplicity planes (namely, lower than the density of the cone), and when one knows that density gaps do not occur. For example, one always knows from the stationarity condition that the average-part of the blow-up relative to a plane is harmonic (see, e.g. \cite{wickstable}). A slight modification is needed in the setting of a fine blow-up procedure however, as then the fine blow-up class depends on additional parameters, and for a fixed choice of parameters the fine blow-up class does not obey the closure property $(\FB5)$ -- this will be discussed in Section \ref{sec:fine}. It should be noted that whether arbitrary blow-ups of stationary integral varifolds relative to a (multiplicity $>1$) plane obey any variational principle, such as being stationary for the Dirichlet energy for suitable variations, is a key unresolved problem in the regularity theory of stationary integral varifolds; this is closely related to the fact that one can seemingly only show \textit{weak} $W^{1,2}$ convergence to the blow-up of suitable rescaled Lipschitz approximations, as opposed to \textit{strong} $W^{1,2}$ convergence (in particular, convergence of the energy).

Now fix $q = (q_1,\dotsc,q_N)$ as above and a blow-up class $\FB_q(\Omega)$. Our main result in this setting is the following boundary regularity result for functions in $\FB_q(\Omega)$:

\begin{theorem}\label{thm:reg}
	Fix $q = (q_1,\dotsc,q_N)$ and a blow-up class $\FB_q(\Omega)$. Then, there exists $\beta = \beta(C_1,\alpha_1)\in (0,1)$, where $C_1,\alpha_1$ are the constants from Definition \ref{def:blow-up}, such that if $v\in \FB_q(\Omega)$, then in fact $v|_{B_{1/8}(0)\cap H}\in C^{1,\beta}(\overline{B_{1/8}(0)\cap H})$, with the estimate
	$$\|v\|_{1,\beta;B_{1/8}(0)\cap H}\leq C\left(\int_{B_{1/2}(0)}|v|^2\right)^{1/2}$$
	where $C = C(C_1,\alpha_1)\in (0,\infty)$. 
\end{theorem}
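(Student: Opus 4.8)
The plan is to reduce the theorem to an application of the multi-valued Campanato machinery, specifically Lemma \ref{MSC:nonflat} with $k=1$ and $q=2$, applied to the tuple $(v^1,\dots,v^N)$ on $\Omega = B_1(0)\cap H$, $\Omega'' = B_{1/4}(0)\cap H$, $\Omega' = B_{1/8}(0)\cap H$. We take $\Gamma := B_{1/2}(0)\cap\del H$ (the a priori ``bad'' boundary points), $\Gamma_i$ to be the branch set of $v^i$ — namely $\B_{v^i}\cap\overline{\Omega''}$ together with those points of $\del H$ that are branch points of the odd reflection of the symmetric part $v^i_s$ across $\del H$ (this reflection is well-defined once one knows $v^i$ is $C^1$ in a one-sided neighbourhood of $\del H$, see below), and $\FP_0$ a family of degree-one $Q_i$-valued polynomials bounded by $\beta\sim\big(\int_{B_{1/2}(0)}|v|^2\big)^{1/2}$. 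The condition $\overline{\Gamma}_i\subset\Gamma\cup\Gamma_i$ holds because, $v^i$ being a good harmonic function in the interior, $\B_{v^i}$ is closed in $\Omega$ and can only accumulate on $\del H$. With these choices, hypotheses (I)–(III) of Lemma \ref{MSC:nonflat} become, respectively: a decay-of-excess estimate at boundary points; the same at branch points; and the same at generic points. Granting these, Lemma \ref{MSC:nonflat} immediately yields $v^i\in C^{1,\lambda}(\overline{\Omega'};\A_{Q_i}(\R^k))$ with the stated estimate, and we set $\beta:=\lambda$.

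The heart of the matter is hypothesis (I), the decay estimate at a boundary point $z\in\Gamma$. This I would prove by the standard ``excess improvement'' blow-up argument: it suffices to produce $\theta\in(0,\tfrac12)$ and $\mu\in(0,\alpha_1)$ such that, for every $v\in\FB_q(\Omega)$ with $v^i_a(0)=Dv^i_a(0)=0$ normalised suitably, every $z\in\del H$ near $0$, and every small $\sigma$, the scale-$\theta\sigma$ excess (measured by $\rho^{-n-2}\inf_{L\ \mathrm{affine}}\int_{B_\rho(z)\cap\Omega}\sum_i\G(v^i,L^i)^2$) is at most $\theta^{2\mu}$ times the scale-$\sigma$ excess; iterating this produces the polynomials $P^i_z$ and the estimate in (I). Suppose this fails: one obtains $v_m\in\FB_q(\Omega)$, $z_m\to z_0\in\del H$, and scales $\sigma_m\to0$ violating the improvement, and — by a standard maximal-scale (Simon) choice of $\sigma_m$ — one may in addition assume the excess decays at all scales between $\sigma_m$ and a fixed scale. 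Rescaling by $\sigma_m$, subtracting the affine parts of the average parts, and renormalising (all permitted by the closure properties $(\FB5)$), we obtain $w_m\in\FB_q(\Omega)$ with unit scale-one excess and $w^i_{m,a}(0)=Dw^i_{m,a}(0)=0$; by the compactness property $(\FB6)$, $w_m\to w\in\FB_q(\Omega)$ strongly in $L^2_{\mathrm{loc}}(B_1(0)\cap\overline H)$ (and, via $(\FB3)$, with no loss of $L^2$ mass, so $w\not\equiv0$). One now identifies $w$: by $(\FB2)$ each $w^i$ is a good $q_i$-valued harmonic function with $w^i_a(0)=Dw^i_a(0)=0$; by the rescaled $(\FB3)$ (using that $\kappa$ is smooth single-valued), the symmetric part $w^i_s$ vanishes on $\del H\cap B_1(0)$; and by the Hardt–Simon inequality $(\FB4)$, passed to the limit and combined with the decay at larger scales, $\del_R(w^i/R)\equiv 0$, i.e. each $w^i$ is homogeneous of degree one. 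Hence the odd reflection of $w^i_s$ across $\del H$ is an entire, homogeneous-degree-one $q_i$-valued harmonic function, which is linear by the Classification Hypothesis; together with $w^i_a\equiv0$ this forces $w = w_*$ to be a special linear function $w^i_*(x)=\sum_j\llbracket a^i_j x^1\rrbracket$, $\sum_j a^i_j=0$, and since $w\not\equiv0$ some $q_{i_*}>1$ with $w^{i_*}_*\not\equiv0$ (otherwise every $w^i$ would be single-valued linear vanishing to first order at $0$, hence zero). Then $w_m\to w_*$ in $L^2$ places $w_m$ in the hypothesis of the $\epsilon$-regularity property $(\FB7)$ for $m$ large, giving $w_m|_{B_{1/2}(0)\cap H}\in C^{1,\alpha_1}$ with an a priori bound; translating this bound into excess decay gives scale-$\theta$ excess $\leq C\theta^{2\alpha_1}$, which for $\theta$ small and $\mu<\alpha_1$ contradicts the assumed failure of improvement. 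This proves (I), with $\sup_\Omega|P^i_z|\le\beta\sim(\int_{B_{1/2}(0)}|v|^2)^{1/2}$ coming from the $\kappa$-bound in $(\FB3)$.

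Hypotheses (II) and (III) are comparatively routine. At an interior branch point $x$ of $v^i$, the third bullet of Definition \ref{good} gives $\sigma^{-n}\int_{B_\sigma(x)}|v^i_s|^2\le(\sigma/\rho)^{2(1+1/q_i)}\rho^{-n}\int_{B_\rho(x)}|v^i_s|^2$; since $v^i_a$ is a smooth single-valued harmonic function, comparing $v^i$ with $\ell^i_x:=q_i\llbracket v^i_a(x)+Dv^i_a(x)\cdot(\,\cdot-x)\rrbracket$ and absorbing the $O(\sigma^{n+4})$ error yields the required decay with exponent $q\mu = 2/q_i$; the form with an arbitrary comparison polynomial on the right-hand side (as in (II), (III)) follows by elementary manipulations using Lemma \ref{lemma:poly_estimate}. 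For boundary branch points one first establishes, by a preliminary application of Lemma \ref{MSC} using only the boundary estimate (I) together with interior regularity $(\FB2)$ (which does not require the reflection), that $v^i$ is $C^1$ in a one-sided neighbourhood of $\del H$; one may then odd-reflect $v^i_s$ and run the same argument for the reflected good harmonic function. Finally, at a generic point $x\in\overline{\Omega''}\setminus(\Gamma\cup\Gamma_i)$, $v^i$ is locally a finite sum of single-valued $C^\infty$ harmonic functions, so every polynomial decay rate holds. Assembling (I)–(III) and invoking Lemma \ref{MSC:nonflat} completes the proof; the constants depend only on the data of $\FB_q(\Omega)$, which is encoded in $C_1$ and $\alpha_1$.

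The main obstacle is the identification of the blow-up $w$ in hypothesis (I): establishing that $w$ is homogeneous of degree one from the Hardt–Simon inequality $(\FB4)$ (which forces the correct maximal-scale choice of $\sigma_m$ so that the right-hand side of $(\FB4)$ tends to zero on fixed balls), and ruling out loss of $L^2$ mass near $\del B_1(0)$ in the limit, are the delicate points; after that, the Classification Hypothesis, the odd-reflection step, and $(\FB7)$ fit together routinely. A secondary, bookkeeping-heavy point is setting up the a priori $C^1$ regularity near $\del H$ needed before the reflected branch set $\Gamma_i$ can even be defined, and verifying that the resulting $\Gamma,\Gamma_i$ satisfy the closedness hypotheses of Lemma \ref{MSC:nonflat}.
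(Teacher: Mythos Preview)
Your overall framework—reducing to Lemma \ref{MSC:nonflat} with $k=1$, and proving hypothesis (I) via a blow-up/contradiction argument producing a homogeneous degree one limit $w\in\FB_q(\Omega)$—matches the paper's strategy. The gap is in the step where you classify $w$. You write that ``the odd reflection of $w^i_s$ across $\del H$ is an entire, homogeneous-degree-one $q_i$-valued harmonic function, which is linear by the Classification Hypothesis''. But the Classification Hypothesis requires the reflected function to be $C^1$ on $\R^n\setminus\{0\}$, and for that you need $w^i_s$ to be $C^1$ up to $\del H\setminus\{0\}$. At this point in the argument $w$ is merely an element of $\FB_q(\Omega)$, so all you know a priori is $C^{0,\alpha}$ up to the boundary (and even that requires the preliminary Lemma \ref{HP1}, which you do not invoke). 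Establishing $C^1$ boundary regularity is precisely the content of the theorem you are proving, so the argument as written is circular. The paper flags exactly this issue in the remark after Lemma \ref{HP2}: in the multi-valued setting one must \emph{first} establish $C^1$ regularity at the boundary before one can reflect.

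The paper breaks the circularity by proving the classification of homogeneous degree one elements (Lemma \ref{HP2}) as a separate, self-contained result via a dimension-reduction argument on the translation-invariant subspace $T_{i_*}(v)$. For a homogeneous $v$ with $d_{i_*}(v)<n-2$, one runs the dichotomy (a)/(b) (either $(\FB7)$ applies, or a reverse Hardt--Simon inequality holds) on compact sets $K\subset\overline{H}\setminus T_{i_*}(v)$; the limits arising in the contradiction step have \emph{strictly larger} translation invariance and are therefore already linear by induction, so $(\FB7)$ fires and one obtains $v\in C^1(\overline{H}\setminus T_{i_*}(v))$ via Lemma \ref{MSC:nonflat}. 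Only then does one quotient by $T_{i_*}(v)$ and reflect, obtaining a function $C^1$ on $\R^m\setminus\{0\}$ with $m\geq 3$, to which the Classification Hypothesis applies. Your proposal skips this entire induction, and without it the reflection step fails. (A secondary point: your route to homogeneity via a ``maximal-scale choice of $\sigma_m$'' plus $(\FB4)$ is not how the paper proceeds and is underspecified—under your normalisation the right-hand side of $(\FB4)$ is order one, not $o(1)$. The paper instead obtains homogeneity from the \emph{failure} of the reverse Hardt--Simon alternative (b) in the dichotomy.)
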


\textbf{Remark:} Equipped with this boundary regularity result, using the fact that the symmetric part $v^i_s$ of each component of $v\in \FB_q(\Omega)$ obeys $\left. v^i_s\right|_{\del H} = q_i\llbracket 0\rrbracket$ (this follows from $(\FB3)$), one may perform an odd reflection of $v_s$ across the boundary $\del H$, giving rise to a symmetric $C^{1,\beta}$ $q_i$-valued harmonic function on all of $B^n_1(0)$. Thus, if one knows that the interior branch set of such a function is countably $(n-2)$-rectifiable (which is known in the case $q_i=2$ by \cite{krummelwick1}), we deduce that the boundary branch set of each $v\in \FB_q(\Omega)$ is also countably $(n-2)$-rectifiable. Whether such a result passes back to the varifold level would seem to be an open question.

The proof of Theorem \ref{thm:reg} will consist of three parts. First, we are able to use $(\FB3)$ with Lemma \ref{MSC:nonflat} to deduce initial $C^{0,\alpha}(\overline{B_{1/8}(0)\cap H})$ regularity of functions in $\FB_q(\Omega)$; the exponent $\lambda = n+3/2$ present in $(\FB3)$ is however insufficient to push this to $C^1$ regularity. We will however be able to use the larger exponent present in the Hardt--Simon inequality, $(\FB4)$, to push the regularity to $C^{1,\beta}$. Indeed, the second part of the proof will involve using the Hardt--Simon inequality to classify the homogeneous degree one elements of $\FB_q(\Omega)$ by showing that they are indeed $C^1(B_1(0)\cap \overline{H}\setminus S(v))$, where $S(v)$ is the set of translation invariance of $v$; this is sufficiently high regularity to perform a reflection across $\del H$ and apply our Classification Hypothesis (after quotienting out our function by the translation invariant subset $S(v)$). Once we have achieved this classification of homogeneous degree one elements, we will be able to combine it with the Hardt--Simon inequality again to deduce that sufficiently strong integral decay estimates hold in order to apply Lemma \ref{MSC:nonflat} to deduce the result.

Before starting the proof, we first make the following observation, based on the unique continuation principle, Lemma \ref{UC}: there exists $\epsilon = \epsilon(\FB_q(\Omega))\in (0,1)$ such that if $v\in \FB_q(\Omega)$ has $v_a(0) = 0$ and $Dv_a(0) = 0$, then
\begin{equation}\label{E:lower_bound}
\epsilon\int_\Omega |v|^2 <\int_{\Omega\backslash B_{1/2}(0)}|v|^2.
\end{equation}
Indeed, if this were not true, then for each $\ell=1,2,\dotsc$, one could find $v_\ell\in \FB_q(\Omega)$ with $(v_\ell)_a(0) = 0$, $D(v_\ell)_a(0) = 0$ for which $\frac{1}{\ell}\int_{\Omega}|v_\ell|^2>\int_{\Omega\backslash B_{1/2}}|v_\ell|^2$. Setting $w_\ell:= v_\ell/\|v_\ell\|_{L^2(\Omega)}$ (note that $w_\ell\in \FB_q(\Omega)$ by $(\FB5\text{II})$), by $(\FB6)$ we may pass to a subsequence to ensure that $w_\ell\to w\in \FB_q(\Omega)$, where the convergence is strong in $L^2_{\text{loc}}(B_1(0)\cap\overline{H})$. In particular, since $\|w_\ell\|_{L^2(\Omega)}=1$, we have $\int_{B_{1/2}\backslash H}|w_\ell|>1-\ell^{-1}$ for each $\ell$, and thus $\int_{B_{1/2}\cap H}|w|^2=1$. But then we also see that $\int_K |w|^2 = 0$ for each compact $K\subset\Omega\backslash B_{1/2}(0)$, and so $w\equiv 0$ on $\Omega\backslash B_{1/2}(0)$. But from $(\FB2)$, one may invoke the unique continuation property for good $Q$-valued harmonic functions, Lemma \ref{UC}, to see that necessarily $w\equiv 0$ on $\Omega$, contradicting $\int_{B_{1/2}\cap H}|w|^2 = 1$; thus (\ref{E:lower_bound}) is established.

\subsection{Proof of Theorem \ref{thm:reg}}\label{sec:proof}

Fix $q = (q_1,\dotsc,q_N)\in \N^N$ throughout this section, and let $C_1\in (0,\infty), \alpha_1\in (0,1)$ denote the constants from Definition \ref{def:blow-up}.

\begin{lemma}[Initial $C^{0,\alpha}$ Boundary Regularity Estimate]\label{HP1}
	There exists $\alpha = \alpha(C_1,n,k)\in (0,1)$ such that if $v\in \FB_q(\Omega)$, then $v|_{B_{1/8}(0)}\in C^{0,\alpha}(\overline{B_{1/8}(0)\cap H})$; moreover, there is a $C = C(C_1,n,k)\in (0,\infty)$ such that
	$$\|v\|_{0,\alpha;B_{1/8}(0)\cap H}\leq C\left(\int_{B_{1/2}(0)\cap H}\right)^{1/2}.$$
\end{lemma}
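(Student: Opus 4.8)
The plan is to deduce this as a direct application of Lemma~\ref{MSC:nonflat} with $k=0$ and $q=2$, exactly as envisaged in the second remark following that lemma. Identifying $H$ with $\{x\in\R^n:x^1>0\}$, I would take the ambient domain to be the convex $A$-weighted $n$-dimensional half-ball $\Omega=B_1(0)\cap H$ on which the $v^i$ live, and set $\Omega''=B_{1/4}(0)\cap H$, $\Omega'=B_{1/8}(0)\cap H$. For the hierarchy of bad sets take $\Gamma=\del H\cap\overline{\Omega}$ (the boundary) and, for each $i$, $\Gamma_i=\B_{v^i}$ (the interior branch set of $v^i$): by $(\FB2)$ each $v^i$ is a good $q_i$-valued harmonic function, so $\B_{v^i}$ is closed in $\Omega$ with $\dim_\H\B_{v^i}\le n-2$, whence $\Gamma_i\subset\overline{\Omega}\setminus\Gamma$ and (after intersecting with a fixed smaller ball to discard the irrelevant accumulation points of $\B_{v^i}$ near $\del B_1(0)$, which do not affect the conclusion on $\overline{\Omega'}$) $\overline{\Gamma}_i\subset\Gamma\cup\Gamma_i$, as required. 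Finally take the family $\FP_0$ to consist of the diagonal $q_i$-valued constants $q_i\llbracket\kappa^i(z)\rrbracket$, $z\in\Gamma\cap\overline{\Omega''}$; these obey $\sup_\Omega|P|\le\beta:=\big(C_1\int_{B_{1/2}(0)\cap H}|v|^2\big)^{1/2}$ by the $\kappa$-estimate in $(\FB3)$, since $\overline{\Omega''}\cap\del H\subset B_{5/16}(0)\cap\del H$, while $\FP_i$, $\FP$ will be, respectively, the diagonal constants $q_i\llbracket v^i_a(x_1)\rrbracket$ at interior branch points and the local comparison constants at regular interior points described below.

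Hypothesis~(I) is just a reformulation of $(\FB3)$. Fixing $x_0\in\Gamma\cap\overline{\Omega''}$ and putting $P^i_{x_0}=q_i\llbracket\kappa^i(x_0)\rrbracket$, so that $\G(v^i(x),P^i_{x_0})^2$ equals the $i$-th integrand $|v^i(x)-\kappa^i(x_0)|^2$ appearing in $(\FB3)$, one uses $|x-x_0|^{-n-3/2}\ge\sigma^{-n-3/2}$ on $B_\sigma(x_0)$ together with $B_\sigma(x_0)\subset B_{\rho/2}(x_0)$ (when $\sigma\le\rho/2$) to turn the weighted inequality of $(\FB3)$ into $\sigma^{-n-3/2}\int_{B_\sigma(x_0)\cap\Omega}\sum_i|v^i-\kappa^i(x_0)|^2\le C_1\rho^{-n-3/2}\int_{B_\rho(x_0)\cap\Omega}\sum_i|v^i-\kappa^i(x_0)|^2$; multiplying through by $\sigma^{3/2}$ gives precisely the decay demanded in~(I), with $\beta_0=C_1$ and $\mu=3/4$. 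The admissible range $\rho\le\epsilon$ for any fixed $\epsilon\in(0,1/4)$ is covered by $(\FB3)$ because $|x_0|\le 1/4$.

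Hypotheses~(II) and~(III) are extracted from the interior structure of good harmonic functions. At an interior branch point $x_1\in\B_{v^i}\cap\overline{\Omega''}$ the constraints in~(II) force $\rho<\dist(x_1,\del H)$ and $\rho<1/4<\dist(x_1,\del B_1)$, so $B_\rho(x_1)\subset\Omega$ and both the branch-point decay estimate of Definition~\ref{good} and the mean value property of the (harmonic, single-valued) average part $v^i_a$ are available. Choosing $P^i_{x_1}=q_i\llbracket v^i_a(x_1)\rrbracket$ and using the pointwise orthogonality identity $\G(v^i,q_i\llbracket c\rrbracket)^2=|v^i_s|^2+q_i|v^i_a-c|^2$ (the cross term vanishes since $\sum_j(v^i_j-v^i_a)=0$), the Definition~\ref{good} decay of $\int_{B_\sigma(x_1)}|v^i_s|^2$ in $\sigma$ together with the $(\sigma/\rho)^{n+2}$ decay of $\int_{B_\sigma(x_1)}|v^i_a-v^i_a(x_1)|^2$ (expand $v^i_a-v^i_a(x_1)$ in spherical harmonics) yields $\int_{B_\sigma(x_1)}\G(v^i,P^i_{x_1})^2\le(\sigma/\rho)^{n+2}\int_{B_\rho(x_1)}\G(v^i,P^i_{x_1})^2$; and, because $v^i_a(x_1)=\fint_{B_\rho(x_1)}v^i_a$, the same identity gives $\int_{B_\rho(x_1)}\G(v^i,P^i_{x_1})^2\le\int_{B_\rho(x_1)}\G(v^i,P)^2$ for every diagonal constant $P$ (in particular for every $P\in\FP_0$), upgrading the previous display to the form of~(II) for arbitrary $P\in\FP_0$ (with $\beta_i=1$, $\mu=3/4\le1$). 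For~(III), at $x_2$ away from $\del H$ and from $\B_{v^i}$, the relevant balls $B_\rho(x_2)\subset\Omega\setminus\B_{v^i}$ are simply connected, so on them $v^i=\sum_\ell\llbracket h_\ell\rrbracket$ with the $h_\ell$ single-valued harmonic; taking $P^i_{x_2}=\sum_\ell\llbracket h_\ell(x_2)\rrbracket$, the mean value property of the $h_\ell$ both yields the $(\sigma/\rho)^{n+2}$ decay of $\int_{B_\sigma(x_2)}\G(v^i,P^i_{x_2})^2$ and shows (via $\int_{B_\rho(x_2)}\G(v^i,q_i\llbracket c\rrbracket)^2=\sum_\ell\int_{B_\rho(x_2)}|h_\ell-h_\ell(x_2)|^2+\H^n(B_\rho(x_2))\sum_\ell|h_\ell(x_2)-c|^2$) that the natural comparison constant is optimal among all admissible $P\in\FP_0\cup\FP_i$, exactly as in \cite[Lemma~4.3]{wickstable}. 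Verifying~(II) and~(III) in this form --- with an \emph{arbitrary} comparison polynomial on the right-hand side, and over the precise radius ranges on which the Definition~\ref{good} estimate genuinely applies --- is the main point requiring care; it is what forces the orthogonal average/symmetric splitting and the mean value property into play, and it is why one keeps $\Omega''$ well inside $B_1(0)$.

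Granting~(I)--(III), Lemma~\ref{MSC:nonflat} (with $\mu=3/4$ and a fixed $\epsilon\in(0,1/4)$) produces $v^i\in\CL^{2,\lambda}_0(B_{1/8}(0)\cap H;\A_{q_i}(\R^k))$ for some $\lambda=n+2\tilde\mu$ with $\tilde\mu\in(0,3/4]$ depending only on $C_1,n,k$ (the cap below $3/4$ coming from the geometric iteration step $\gamma\mapsto\log_\gamma(1/4)$ in the proof of that lemma, $\gamma$ being fixed by $\beta_0=C_1$), together with $\|v^i\|_{0,\lambda;\overline{B_{1/8}(0)\cap H}}\le C\big(\beta^2+\int_{B_{1/4}(0)\cap H}\sum_i|v^i|^2\big)^{1/2}\le C(C_1)\big(\int_{B_{1/2}(0)\cap H}|v|^2\big)^{1/2}$. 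Since $n<\lambda<n+2$, Theorem~\ref{thm:campanato} (applied with $\ell=k=0$) then gives $v^i\in C^{0,\alpha}(\overline{B_{1/8}(0)\cap H};\A_{q_i}(\R^k))$ with $\alpha=(\lambda-n)/2=\tilde\mu\in(0,1)$ and the corresponding estimate; summing over $i=1,\dots,N$ yields the statement.
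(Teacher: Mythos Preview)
Your proposal is correct and follows essentially the same route as the paper's own proof: set up the boundary as $\Gamma$, the branch sets as $\Gamma_i$, verify hypothesis~(I) of Lemma~\ref{MSC:nonflat} from $(\FB3)$, hypothesis~(II) from the orthogonal splitting $|v^i-c|^2=|v^i_s|^2+q_i|v^i_a-c|^2$ together with the good-harmonic-function decay and harmonicity of $v^i_a$, and hypothesis~(III) from the local single-valued harmonic selection on simply connected balls away from $\Gamma\cup\Gamma_i$. Your write-up is in fact more detailed than the paper's in justifying the optimality of the chosen comparison constants via the mean value property, but the argument is the same.
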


\begin{proof}
	From $(\FB3)$ we get that, for each $z\in B_{1/8}(0)\cap \del H$ and any $0<\sigma\leq\rho/2\leq 1/32$, if we set $v^i(z):= q_i\llbracket \kappa^i(z)\rrbracket$:
	$$\sigma^{-n-3/2}\int_{B_{\sigma}(z)\cap\Omega}\sum_{i}\G(v^i,v^i(z))^2 \leq C_1\rho^{-n-3/2}\int_{B_\rho(z)\cap\Omega}\sum_i\G(v^i,v^i(z))^2.$$
	Set $\Gamma:= B_{1/8}(0)\cap \del H$ and $\Gamma_i:= \B_{v^i}$, the branch set of $v^i$. Fix $\in B_{1/8}(0)\cap H$. Then if $z\in \Gamma_i$, since for any $Q$-valued function $w$ we have $|w|^2 = |w_s|^2 + Q|w_a|^2$, and since for any good $Q$-valued function we know $w_a$ is harmonic, we thus get for all $0<\sigma\leq\rho/2<\frac{1}{2}\dist(z,\Lambda)$ and any constant $b\in \R^k$:
	\begin{align*}
		\sigma^{-n}\int_{B_\sigma(z)\cap \Omega}\G(v^i,v^i_a(z))^2 & = \sigma^{-n}\int_{B_\sigma(z)}|v^i-v^i_a(z)|^2\\
		& = \sigma^{-n}\int_{B_\sigma(z)}Q|v^i_a-v^i_a(z)|^2 + |v_s|^2\\
		& \leq \left(\frac{\sigma}{\rho}\right)^2\rho^{-n}\int_{B_\rho(z)}Q|v^i_a-b|^2 + \left(\frac{\sigma}{\rho}\right)^{2(1+1/q)}\rho^{-n}\int_{B_\rho(z)}|v^i_s|^2\\
		& = 2\left(\frac{\sigma}{\rho}\right)^{2}\rho^{-n}\int_{B_\rho(z)\cap\Omega}\G(v^i,b)^2.
	\end{align*}
	Moreover if $z\in B_{1/8}(0)\cap H\setminus (\Gamma_i\cup\Gamma)$, then on some open set disjoint from $\Gamma\cup\Gamma_i$ we can express $v^i$ as $q_i$ single-valued harmonic functions, and thus a similar inequality holds for radii $0<\sigma\leq \rho/2 <\frac{1}{2}\dist(z,\Gamma\cup\Gamma_i)$ by usual estimates for harmonic functions. Thus, we see that we can apply Lemma \ref{MSC:nonflat}, along with the assumed bounds on the boundary values from $(\FB3)$, to get $v^i\in C^{0,\alpha}(\overline{B_{1/8}(0)\cap H},\A_{q_i}(\R))$ for each $i$ and some $\alpha = \alpha(C_1,n,k)\in (0,1)$, with
	$$\|v\|_{0,\alpha;B_{1/8}(0)\cap\overline{H}}^2 \leq C\int_{B_{1/2}(0)\cap H}|v|^2$$
	as desired.
\end{proof}

\textbf{Remark:} From Lemma \ref{HP1} we see that the average-part of each $v^i$ is a harmonic function in $C^{0,\alpha}(\overline{B_{1/8}(0)\cap H})\cap C^{2,\alpha}(B_{1/8}(0)\cap H)$, which from $(\FB3)$ has smooth boundary data. As a consequence of classical elliptic boundary regularity results (see \cite{gt} or \cite{morrey}) it then follows that $v^i_\alpha\in C^{2,\alpha}(\overline{B_{1/8}(0)\cap H})$ for each $i$.

\begin{lemma}[Classification of Homogeneous Degree One Elements]\label{HP2}
	Suppose that $v\in \FB_q(\Omega)$ is homogeneous of degree one on $\Omega\backslash B_{1/4}$, i.e. $\frac{\del v}{\del R} = 0$, where $R =|x|$. Then $v$ is a linear function on $\overline{\Omega}$, and moreover the symmetric part $v^i_a$ takes the form $x\mapsto \sum_{j=1}^q\llbracket a_j^i x^1\rrbracket$ for each $i$, where $a^i_j\in \R^k$ have the property that $\sum_j a^i_j = 0$.
\end{lemma}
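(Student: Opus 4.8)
The plan is to reduce the statement to showing that each symmetric part $v^i_s := v^i - v^i_a$ is a linear function, and to obtain this by upgrading the boundary regularity far enough to perform an odd reflection across $\del H$ and invoke the Classification Hypothesis. For the reductions: since $v$ is homogeneous of degree one on $\Omega\setminus B_{1/4}$, so is each average‑part $v^i_a$, and a harmonic function homogeneous of degree one on an annular region is linear there; hence, by unique continuation for harmonic functions (using $(\FB2)$), each $v^i_a$ is linear on all of $\Omega$, say $v^i_a(x) = x\cdot Dv^i_a(0)$. Consequently $\ell_{v^i,z}(x) = v^i_a(z) + (x-z)\cdot Dv^i_a(z) = v^i_a(x)$ for every $z\in B_1(0)\cap\del H$, so the right‑hand side of the Hardt--Simon inequality $(\FB4)$ is simply $C_1\rho^{-n-2}\int_{B_\rho(z)\cap\Omega}\sum_i|v^i_s|^2$; moreover $v^i - v^i_a(z)$ differs from $v^i_s$ only by the affine function $(x-z)\cdot Dv^i_a(z)$, which is homogeneous of degree one about $z$ and so contributes nothing to the left‑hand side of $(\FB4)$, whence that left‑hand side equals $\int_{B_{\rho/2}(z)\cap\Omega}\sum_i R_z^{2-n}\left(\frac{\del}{\del R_z}\frac{v^i_s}{R_z}\right)^2$. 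Letting $\sigma\downarrow 0$ in $(\FB3)$ forces $v^i(z) = q_i\llbracket\kappa^i(z)\rrbracket$ for $z\in\del H$, so $v^i_s$ vanishes on $B_1(0)\cap\del H$; and an argument via Lemma \ref{UC} shows each $v^i_s$ is homogeneous of degree one on all of $\Omega$. Finally, if each $v^i_s$ is linear, $v^i_s = \sum_j\llbracket A^i_j x\rrbracket$, then symmetry gives $\sum_j A^i_j = 0$ and the vanishing of $v^i_s$ on $\{x^1 = 0\}$ forces $A^i_j x = a^i_j x^1$ with $a^i_j := A^i_j e_1$ and $\sum_j a^i_j = 0$, so that $v = v_a + v_s$ is linear (indeed smooth) on $\overline\Omega$, which is exactly the assertion.

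\emph{From boundary $C^1$‑regularity to linearity.} Because $v^i_s$ is homogeneous of degree one about $0$, it suffices to prove $v^i_s$ is $C^1$ up to $\del H$ on the annular region $A := (B_{3/4}(0)\setminus\overline{B_{1/2}(0)})\cap\overline H$: homogeneity then propagates this to $B_1(0)\cap\overline H\setminus\{0\}$, and $C^1$‑ness at the origin itself will not be needed. Granting this, odd‑reflect $v^i_s$ across the hyperplane $\del H = \{x^1 = 0\}$: since $v^i_s$ is $C^1$ up to $\del H$ and vanishes there, the reflection $\tilde v^i_s$ is a $C^1$, symmetric, $q_i$‑valued harmonic function, still homogeneous of degree one, on a punctured ball. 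Writing $S = S(v^i_s)\subseteq\{x^1 = 0\}$ for its subspace of translation‑invariance: if $\dim(\R^n/S)\leq 1$ then $v^i_s$ is a homogeneous‑degree‑one function of a single variable vanishing at $0$, hence of the form $\sum_j\llbracket a^i_j x^1\rrbracket$ outright; otherwise, after quotienting by $S$ the Classification Hypothesis applies and shows $\tilde v^i_s$, and hence $v^i_s$, is linear.

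\emph{The main obstacle: $C^1$ up to $\del H$ via Hardt--Simon.} This is the technical heart of the lemma. Using $(\FB4)$ in the simplified form above, together with the interior regularity $(\FB2)$ — in particular the branch‑point decay of Definition \ref{good} — I would telescope over dyadic radii, exploiting that the dyadic decrement of $\rho\mapsto\rho^{-n-2}\int_{B_\rho(z)\cap\Omega}\sum_i|v^i_s|^2$ dominates the left‑hand side of $(\FB4)$, to extract a decay estimate $\rho^{-n-2}\int_{B_\rho(z)\cap\Omega}\sum_i|v^i_s|^2\leq C\rho^{2\mu}$ for some $\mu = \mu(C_1,n,k)\in(0,1)$, uniformly for $z\in\del H$ away from $0$ and, after rescaling, at interior branch points of the $v^i$. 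Closing the telescoping requires control of the tangent functions of $v^i_s$ at boundary points $z\neq 0$; these are again homogeneous of degree one but translation‑invariant in the direction of $z$, hence of strictly smaller effective dimension, so this step should be arranged as an induction on $n$ (with the known two‑valued theory as base case) — it is precisely the homogeneity of $v$ that lets this run without circularly invoking the proof of Theorem \ref{thm:reg}, since the tangent functions of $v$ at boundary points are themselves of this controlled homogeneous form. Feeding the decay estimate into Lemma \ref{MSC:nonflat} with $k = 1$, $\Gamma = B_r(0)\cap\del H$, $\Gamma_i = \B_{v^i}$ and comparison polynomials $v^i_a$, applied on suitable convex $A$‑weighted subdomains of $B_1(0)\cap H$ staying away from the origin, gives $v^i_s\in C^{1,\mu}$ up to $\del H$ on $A$ — exactly what the reflection step requires. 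Once this is in place, the remaining steps (the reflection, the quotient by $S$, the appeal to the Classification Hypothesis, and reading off the form $\sum_j\llbracket a^i_j x^1\rrbracket$ with $\sum_j a^i_j = 0$) are routine.
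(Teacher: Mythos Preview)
Your overall architecture---reduce to the symmetric parts $v^i_s$, obtain $C^1$ boundary regularity away from a small set, odd-reflect across $\del H$, and invoke the Classification Hypothesis---matches the paper. The genuine gap is in the paragraph you label ``The main obstacle,'' where the mechanism for producing the decay $\rho^{-n-2}\int_{B_\rho(z)\cap\Omega}|v_s|^2\leq C\rho^{2\mu}$ is not correctly identified.

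The Hardt--Simon inequality $(\FB4)$ bounds the radial energy by the $L^2$ excess, but telescoping this alone cannot produce decay of the excess: for that one needs the \emph{reverse} Hardt--Simon inequality (excess controlled by radial energy on an annulus), and this is simply not always true. The paper instead establishes a dichotomy at each $z\in K\cap\del H$ and scale $\rho$: either (b) the reverse inequality holds, or (a) the conclusions of the $\epsilon$-regularity hypothesis $(\FB7)$ hold on $B_{3\rho/8}(z)$. This dichotomy is proved by a compactness argument using $(\FB5)$ and $(\FB6)$: if both fail along a sequence, the rescalings $w_\ell := v_{z_\ell,\rho_\ell}$ converge to a nonzero homogeneous degree-one $w_*\in\FB_q(\Omega)$ which is translation-invariant along $z$ in addition to $T_{i_*}(v)$; by maximality of $\sum_i d_i(v)$ this forces $w_*$ to be linear with $(w_*)_a\equiv 0$, and then $(\FB7)$ applied to $w_\ell$ for large $\ell$ contradicts the assumed failure of (a). Your proposal never invokes $(\FB7)$---but without it there is no way to pass from ``the tangent function at $z$ is linear'' back to any decay or regularity statement for $v$ itself at the original scale. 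The ``dyadic decrement dominates the left-hand side of $(\FB4)$'' is precisely the reverse inequality, and asserting it is begging the question.

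Two secondary points. First, the paper's induction is not on the ambient dimension $n$ but on $\sum_i d_i(v)$ where $d_i(v)=\dim T_i(v)$; this keeps all blow-up limits inside the \emph{same} class $\FB_q(\Omega)$ via $(\FB5)$, whereas your ``induction on $n$'' would require the quotient functions to sit in a blow-up class on a lower-dimensional half-ball, which the axioms do not furnish. Second, your base case should read ``two-dimensional'' rather than ``two-valued'': when $d_i(v)\geq n-2$ for every $i$, the quotient is a $q_i$-valued harmonic function of two variables on a simply-connected half-plane, whose branch set is empty by the dimension bound in Definition~\ref{good}, so it decomposes globally into single-valued harmonic sheets and linearity follows from the classical boundary theory---no appeal to the two-valued harmonic theory is needed.
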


\begin{proof}
	The proof will combine the argument in \cite[Proposition 4.2]{wickstable} and its adaptation in \cite{minterwick} with a reflection principle in order to reduce the proof to the classification hypothesis.
	
	Fix $v\in \FB_q(\Omega)$ which is homogeneous of degree one on $\Omega\backslash B_{1/4}(0)$. First, note that necessarily $v$ is homogeneous of degree one on all of $\Omega$: indeed, the homogeneous degree one extension of each $v^i$ to all of $\Omega$ is also a good $q_i$-valued harmonic function, and thus by the unique continuation property (Lemma \ref{UC}) $v$ must necessarily coincide with this extension and so is homogeneous of degree one on $\Omega$.
	
	From the remark above, we know for each $i$ that $v^i_a$ is a $C^{2,\alpha}(\overline{B_{1/8}(0)\cap H})$ homogeneous degree one harmonic function. In particular, $Dv^i_a$ is homogeneous of degree zero and $C^{0,\alpha}(\overline{B_{1/8}(0)\cap H})$, and so is constant along rays and continuous at $0$; this implies that $Dv^i_a$ must be constant, and thus $v^i_a$ is linear for each $i$. Thus, if $v^i\equiv v_a^i$ for each $i$, we are done.
	
	So suppose $v^i\not\equiv v_a^i$ for some $i$. Applying $(\FB5\text{II})$, since here we have $\ell_v^i = v^i_a$, we may reduce to the setting where $v_a^i\equiv 0$ for each $i$ and $\|v\|_{L^2(\Omega)} = 1$. In particular, from $(\FB3)$ we must have that $\left. v^i\right|_{B_1(0)\cap \del H} \equiv q_i\llbracket 0\rrbracket$. Also, as $v\in C^{0,\alpha}(\overline{B_{1/8}(0)\cap H})$ from Lemma \ref{HP1} and since $v$ is homogeneous of degree one, we may extend $v$ to all of $\overline{H}$ by a homogeneous degree one extension, and so assume without loss of generality that $v\in C^{0,\alpha}(\overline{H})$, and that $v^i$ is a good $q_i$-valued harmonic function on $H$.
	
	Now define for each $i$:
	$$T_i(v):= \{z\in \overline{H}: v^i(z+x) = v^i(x)\text{ for all }x\in \overline{H}\}$$
	i.e. $T_i(v)$ is the set of points in $\overline{H}$ for which $v^i$ is translation invariant. Since $\left.v^i\right|_{\del H} = q_i\llbracket 0 \rrbracket$, we must have that $T_i(v)\subset \del H$ if $v^i\not\equiv 0$; by assumption this is true for some $i$. Since $v$ is homogeneous of degree one and continuous on $\overline{H}$, it is straightforward to check that $T_i(v)$ must be a subspace of $\del H$ whenever $v^i\not\equiv 0$, and so in particular in this case $\dim(T_i(v))\leq n-1$.
	
	Now set $d_i(v):= \dim(T_i(v))$, and $d(v):= (d_1(v),\dotsc,d_N(v))$. For $d = (d_1,\dotsc,d_N)$, write $\H_d$ for the set of $v\in \FB_q(\Omega)$ for which $d(v) = d$. First note that if $d_i(v)\geq n-2$ for some $i$, then $v^i$ is translation invariant along a subspace of $\del H$ of dimension at least $n-2$, meaning that $v^i$ is independent of at least $(n-2)$-coordinates (in a suitable basis). Therefore, in such coordinates, we can write $v^i(x^1,\dotsc,x^n) = \tilde{v}(x^1,x^2)$, where $\tilde{v}$ is a good $q_i$-valued harmonic function on $H_2:= \{(x^1,x^2)\in \R^2:x^1>0\}$ which is $C^{0,\alpha}$ on $\overline{H}_2$. Moreover, from the definition of a good $q_i$-valued harmonic function, we know $\dim_\H(\B_{v^i})\leq n-2$, which implies that $\B_{\tilde{v}}\cap H_2 = \emptyset$, since otherwise from the homogeneity and translation invariance we would have $\dim_\H(\B_{v^i})\geq n-1$, a contradiction.Therefore, as $H_2$ is simply connected, we could write, on \textit{all} of $H$, $\tilde{v} = \sum^{q_i}_{j=1}\llbracket \phi_j\rrbracket$, where $\phi_j:H_2\to \R$ is a smooth harmonic function. Since $\left.\phi_j\right|_{\del H_2} = 0$ for each $j$, again by the boundary regularity theory of harmonic functions and the fact that $\phi_j$ is $C^{0,\alpha}(\overline{H}_2)$, this would give that $\phi_j\in C^\infty(\overline{H}_2)$, implying as before that $D\phi_j$ was constant and thus $\phi_j$ is linear for each $j$. The zero boundary values then imply that we must have $\phi_j(x^1,x^2) = a_j x^1$ for some constants $a_j\in \R$. Summarising, we have that $v^i = \sum^{q_i}_{j=1}\llbracket a_jx^1\rrbracket$, and so in particular $d_i(v) = n-1$ (provided $v^i\not\equiv 0$). Hence we see that if $d_i(v)\geq n-2$ for each $i$, then the conclusion holds for $v$.
	
	So now suppose for contradiction that there is some $\H_d\neq \emptyset$ with $d_i<n-1$ for some $i$. Over all such $d$, let us choose one which maximises $\sum_i d_i$. If we can contradict this we prove the result.
	
	For such a $d$, fix $v\in \H_d$. Note that by the above argument we must in fact be able to find some $i_*\in \{1,\dotsc,N\}$ with $d_{i_*}(v)<n-2$. Let $K$ be any compact subspace of $\overline{H}\backslash T_{i_*}(v)$ and $\alpha\in (0,1)$. We claim that there exists $\epsilon = \epsilon(v,K,\alpha,\FB_q(\Omega))\in (0,\dist(K,T_{i_*}(v)))$ such that the following holds: for each $z\in K\cap \del H$, $\rho\in (0,\epsilon]$, and some fixed constant $C = C(\alpha,\FB_q(\Omega))$, at least one of the following holds:
	\begin{enumerate}
		\item [(a)] The conclusions of $(\FB7)$ hold on $B_{3\rho/8}(z)$; in particular, $v$ is $C^{1,\alpha}(\overline{B_{3\rho/8}(z)\cap H})$, and there is a linear function $\psi = (\psi^1,\dotsc,\psi^N)$ with $\psi^i:\overline{H}\to \A_{q_i}(\R^k)$ and $\psi|_{\del H} = 0$ such that for all $0<\tilde{\rho}\leq3\rho/8$:
		$$\tilde{\rho}^{-n-2}\int_{H\cap B_{\tilde{\rho}}(z)}\G(v(x),\psi(x))^2\ \ext x \leq C\left(\frac{\tilde{\rho}}{\rho}\right)^{2\alpha}\cdot\rho^{-n-2}\int_{H\cap B_\rho(z)}|v|^2;$$
		\item [(b)] The \textit{reverse Hardt--Simon inequality} holds, i.e.
		\begin{equation}\label{E:RHS}
			\int_{H\cap B_\rho(z)\backslash B_{\rho/2}(z)}\sum_iR_z^{2-n}\left(\frac{\del(v^i/R_z)}{\del R_z}\right)^2 \geq \epsilon\rho^{-n-2}\int_{H_\cap B_\rho(z)}|v|^2
		\end{equation}
		where $R_z:= |x-z|$.
	\end{enumerate}
	We prove this by contradiction, so suppose it were not true (with $C$ to be chosen depending only the specified parameters). Clearly if $v\equiv 0$ on $H\cap B_\rho(z)$ then there is nothing to prove, and so suppose $v\not\equiv 0$ on any such $H\cap B_\rho(z)$. Thus if the claim were false, one could find numbers $\epsilon_\ell>0$ with $\epsilon_\ell\to 0$, points $z,z_\ell\in K\cap \del H$ with $z_\ell\to z$, and radii $\rho_\ell>0$ with $\rho_\ell \leq \epsilon_\ell \to 0$ such that assertion (a) fails with $\rho = \rho_\ell$ and $z = z_\ell$ for each $\ell$, with $v|_{B_{\rho_\ell}(z_\ell)}\not\equiv 0$, and also that
	$$\int_{H\cap B_{\rho_\ell}(z_\ell)\backslash B_{\rho_\ell/2}(z_\ell)}\sum_i R_{z_\ell}^{2-n}\left(\frac{\del (v^i/R_{z_\ell})}{\del R_{z_\ell}}\right)^2 < \epsilon_\ell \rho_\ell^{-n-2}\int_{H\cap B_{\rho_\ell}(z_\ell)}|v|^2.$$
	Set $w_\ell := v_{z_\ell,\rho_\ell}$, and note that $w_\ell\in \FB_q(\Omega)$ for each $\ell$ by $(\FB5\text{I})$. By $(\FB6)$ we can pass to a subsequence to ensure that $w_\ell\to w_*\in \FB_q(\Omega)$, where the convergence is strong in $L^2_{\text{loc}}(B_1\cap\overline{H})$ and weak in $W^{1,2}_{\text{loc}}(\Omega)$. Moreover, since $v_a \equiv 0$ we have $(w_*)_a \equiv 0$. The above inequality also gives, for each $\ell$,
	$$\int_\Omega\sum_i R^{2-n}\left(\frac{\del(w^i_\ell/R)}{\del R}\right)^2 < \epsilon_\ell.$$
	It follows from this and the weak convergence in $W^{1,2}_{\text{loc}}$, which in particular gives lower semi-continuity of the energy, that $w_*$ is homogeneous of degree one on $\{x\in \Omega:\dist(x,\del\Omega)\geq \epsilon\}$ for each $\epsilon>0$; thus, $w_*$ is homogeneous of degree one on $\Omega$. Moreover, from Lemma \ref{HP1} we know $w_*$ is continuous on $B_1(0)\cap \overline{H}$ and so has a continuous homogeneous degree one extension to all of $\overline{H}$, which is a good multi-valued harmonic function from $(\FB2)$ and Lemma \ref{UC}.
	
	We claim that $w_*\not\equiv 0$. To see this, for any $Q$-valued $C^{1,\alpha}$ function $f$ defined on an interval $[a,b]\subset\R$ we have (see e.g. \cite[Equation (1.2)]{de2010almgren}):
	$$\G(f(a),f(b))\leq \int^b_a |Df|.$$
	Now fix $\w\in S^{n-1}\cap H$ and $s,r\in (1/4,1)$ with $s<r$. Applying this inequality with $f(t):= t^{-1}w^i_\ell(t\w)$ and $I= [s,r]$ for each $i$, we get
	$$\G(r^{-1}w_\ell(r\w),s^{-1}w_\ell(s\w)) \leq \int^r_s\left|\frac{\del(w_\ell(R\w)/R)}{\del R}\right|\ \ext R \leq \int^1_{1/4}\left|\frac{\del(w_\ell(R\w)/R)}{\del R}\right|\ \ext R$$
	which implies by the triangle inequality for $\G$ and the Cauchy--Schwarz inequality:
	$$|w_\ell(r\w)|^2 \leq c\left[|w_\ell(s\w)|^2 + \int^1_{1/4}\left(\frac{\del(w_\ell(R\w)/R)}{\del R}\right)\right]$$
	for some $c = c(n)$; here we have used that $r,s\geq 1/4$. Integrating this inequality over $\w\in S^{n-1}\cap H$ we get
	$$\int_{S^{n-1}\cap H}|w_\ell(r\w)|^2\ \ext\w \leq c\left[\int_{S^{n-1}\cap H}|w_\ell(s\w)|^2\ \ext \w + \int_{\Omega\backslash B_{1/4}(0)}\left(\frac{\del(w_\ell/R)}{\del R}\right)^2\right].$$
	Now, multiplying both sides by $r^{n-1}$ and integrating over $r\in (1/2,1)$, and then multiplying the resulting inequality by $s^{n-1}$ and integrating over $s\in (1/4,1/2)$ (note that $s<r$ always holds) one gets:
	$$\int_{\Omega\backslash B_{1/2}(0)}|w_\ell|^2 \leq c\left[\int_{\Omega\cap B_{1/2}\backslash B_{1/4}}|w_\ell|^2 + \int_{\Omega\backslash B_{1/4}}R^{2-n}\left(\frac{\del(w_\ell/R)}{\del R}\right)\right]$$
	(where we have used in the last integral that $R\in (1/4,1)$). Using (\ref{E:lower_bound}) we then get, for all $\ell$,
	$$\epsilon \leq c\left(\int_{\Omega\cap B_{1/2}\backslash B_{1/4}}|w_\ell|^2 + \epsilon_\ell\right)$$
	for some $\epsilon>0$ which is independent of $\ell$. Thus, as $w_\ell \to w$ strongly in $L^2(B_{1/2}\cap \overline{H})$, we see that $\epsilon\leq c\int_{\Omega\backslash B_{1/4}}|w_*|^2$; hence we see $w_*\not\equiv 0$.
	
	From the definition of $w_\ell$ it is straightforward to see that $T_i(v)\subset T_i(w_*)$ for each $i=1,\dotsc,N$. We now claim that $z\in T_i(w_*)$ for each $i$. Indeed, for each $\ell$ and $i$ write $w^i_\ell = \sum_j\llbracket f^{i,j}_\ell\rrbracket$, where $f^{i,j}_\ell :\overline{H}\to \R^k$ have we property that $(f^{i,q_i})_\ell)^{1} \leq (f^{i,q_i-1}_\ell)^{1}\leq \cdots\leq (f^{i,1}_\ell)^1$ (i.e. ordered by the first component). For notational simplicity, fix $i,j$ and write $f^{i,j}_\ell\equiv f_\ell$. Then by homogeneity of $f_\ell$, for each $y\in \overline{H}$ and $\sigma>0$ we have:
	\begin{align*}
	\sigma^{-n}\int_{B_\sigma(y)\cap H}&f_\ell(x+z)\ \ext x = \delta^{-1}_\ell \sigma^{-n}\int_{B_\sigma(y)\cap H}v^i_j(z_\ell + \rho_\ell(x+z))\ \ext x\\
	 & = (1+\rho_\ell)\delta_\ell^{-1}\sigma^{-n}\int_{B_\sigma(y)\cap H}v_j^i(z_\ell + (1+\rho_\ell)^{-1}\rho_\ell(z-z_\ell) + (1+\rho_\ell)^{-1}\rho_\ell x)\ \ext x\\
	 & = (1+\rho_\ell)^{n+1}\delta_{\ell}^{-1}\sigma^n\int_{B_{(1+\rho_\ell)^{-1}\sigma}((1+\rho_\ell)^{-1}(z-z_\ell+y))\cap H}v_j^i(z_\ell + \rho_\ell x)\ \ext x\\
	 & = (1+\rho_\ell)\cdot\left[(1+\rho_\ell)^{-1}\sigma\right]^{-n}\int_{B_{(1+\rho_\ell)^{-1}\sigma}((1+\rho_\ell)^{-1}(z-z_\ell+y))\cap H}f_\ell(x)\ \ext x
	\end{align*}
	where we have written $\delta_\ell := \|v_{z_\ell,\rho_\ell}\|_{L^2(\Omega)}$. Now letting $\ell\to \infty$, using the strong convergence in $L^2_{\text{loc}}(\overline{H})$ of $w_\ell\to w_*$, and then letting $\sigma\to 0$, we can apply the Lebesgue differentiation theorem (for single-valued functions) to get that for $\H^n$-a.e. $y\in \overline{H}$ we have $w_*(y+z) = w_*(y)$. But then from the continuity of $w_*$ provided by Lemma \ref{HP1}, this is true for every $y\in \overline{H}$; thus as $i,j$ were arbitrary, we see that $z\in T_i(w_*)$ for each $i$.\footnote{It was not strictly necessary to use integral expressions in this argument; we could have used an argument such as \cite[Lemma 3.11]{minterwick}.}
	
	In particular, we see that $z\in T_{i_*}(w_*)$, and so $d_{i_*}(w_*)>d_{i_*}(v)$. Thus we have $\sum_i d_i(w_*)>\sum_i d_i(v)$, with $w_*\not\equiv 0$; thus by the maximality of $d$, the only way to avoid contradiction is if $d_i(w_*)\geq n-1$ for each $i$, and thus $w_*$ is a combination of linear functions. Note however that as $(w_*)_a\equiv 0$, i.e. one half-plane in $w_*$ splits, we may apply the $\epsilon$-regularity property, $(\FB7)$, for all $\ell$ sufficiently large to see that in fact (a) holds for all for all $\ell$ sufficiently large; this provides the necessary contradiction to see that the dichotomy (a) -- (b) holds.
	
	Combining \ref{E:RHS} with $(\FB4\text{I})$, we then get the following dichotomy: if $z\in K\cap \del H$ and $\rho\in (0,\epsilon]$, at least one of the following holds:
	\begin{enumerate}
		\item [(i)] The conclusions of $(\FB7)$ hold on $B_{3\rho/8}(z)$; in particular, $v$ is $C^{1,\alpha}(\overline{B_{3\rho/8}(z)\cap H})$, and there is a linear function $\psi = (\psi^1,\dotsc,\psi^N)$ with $\psi^i:\overline{H}\to \A_{q_i}(\R^k)$ and $\psi|_{\del H} = 0$ such that for all $0<\tilde{\rho}\leq 3\rho/8$:
		$$\tilde{\rho}^{-n-2}\int_{H\cap B_{\tilde{\rho}}(z)}\G(v,\psi)^2 \leq C\left(\frac{\tilde{\rho}}{\rho}\right)^{2\alpha}\cdot\rho^{-n-2}\int_{H\cap B_\rho(z)}|v|^2;$$
		\item [(ii)] We have that (\ref{E:RHS}) holds and that for some $\theta = \theta(v,K,\FB_q(\Omega))$,
		$$\int_{H\cap B_{\rho/2}(z)}\sum_iR_z^{2-n}\left(\frac{\del(v^i/R_z)}{\del R_z}\right)^{2} \leq \theta\int_{H\cap B_\rho(z)}\sum_i R_z^{2-n}\left(\frac{\del(v^i/R_z)}{\del R_z}\right)^2.$$
	\end{enumerate}
	We claim that for this, at least one of the following holds for each $z\in K\cap \del H$:
	\begin{enumerate}
		\item [(I)] The conclusions of $(\FB7)$ hold on some neighbourhood of $z$, and moreover there is a linear function $\psi_z = (\psi^1,\dotsc,\psi^N_z)$ with $\psi^i_z:\overline{H}\to \A_{q_i}(\R^k)$ and $\psi_z|_{\del H} = 0$ such that for all $\rho\in (0,3\epsilon/8]$:
		$$\rho^{-n-2}\int_{H\cap B_\rho(z)}\G(v,\psi)^2\leq C\rho^{2\mu}\int_{H\cap B_\epsilon(z)}|v|^2$$
		for some $C = C(v,K,\FB_q(\Omega))$;
		\item [(II)] We have that (\ref{E:RHS}) holds with $\rho = 2^{-i}\epsilon$ for each $i\in \{1,2,\dotsc\}$, and hence
		$$\int_{H\cap B_\sigma(z)}\sum_i R^{2-n}_z\left(\frac{\del(v^i/R_z)}{\del R_z}\right)^2 \leq \beta\left(\frac{\sigma}{\rho}\right)^{2\mu}\int_{H\cap B_\rho(z)}\sum_i R^{2-n}_z\left(\frac{\del(v^i/R_z)}{\del R_z}\right)^2$$
		for all $0<\sigma\leq \rho/2\leq \epsilon/2$;
	\end{enumerate}
	here, $\mu = \mu(v,K,\FB_q(\Omega))\in (0,1)$ and $\beta = \beta(v,K,\FB_q(\Omega))\in (0,\infty)$. Indeed, for each fixed $z\in K\cap \del H$ the dichotomy (i) or (ii) above holds for $\rho = 2^{-i}\epsilon$, $i=0,1,2,\dotsc$; let $I$ be the first time that (i) holds. If $I=0$, the have alternative (I); also, if $I\geq 1$, then iterating (ii) for $i=0,1,\dotsc,I-1$ and combining the estimate provided with (i) as $i=I$ as well as $(\FB4\text{I})$ and (\ref{E:RHS}) gives again (I). If no such $I$ exists, i.e. if (ii) holds for each such $i$, then iterating (ii) for all $i$ (and interpolating between the scales in the usual fashion) we get (II).
	
	Finally, in the case where (II) holds, we can again use $(\FB4\text{I})$ and (\ref{E:RHS}) in conjunction with the estimate in (II) to replace (II) with:
	\begin{enumerate}
		\item [(II)'] For all $0<\sigma\leq\rho/2\leq \epsilon/4$ we have (writing $|v|^2 = \sum_i|v^i|^2$):
		\begin{equation}
			\sigma^{-n-2}\int_{H\cap B_\sigma(z)}|v|\leq \beta\left(\frac{\sigma}{\rho}\right)^{2\mu}\rho^{-n-2}\int_{H\cap B_\rho(z)}|v|^2.
		\end{equation}
	\end{enumerate}
	Now let $\Gamma$ be the set of $z\in K\cap \del H$ at which (II) holds; as the points at which (I) hold form an open subset of $\del H$, we know that $\Gamma$ is a closed subset of $\del H$. At any point $z\in \del H\backslash \Gamma$, we know that (I) holds, and thus $v$ is $C^{1,\alpha}$ up to the boundary on some neighbourhood of $z$. In particular, since $v|_{\del H}\equiv 0$ we are able to apply an odd reflection across $\del H$ on this neighbourhood to see that $v$ locally extends about $z$ to a $C^{1,\alpha}$ good multi-valued harmonic function on some ball $B_{\rho_z}(z)$; in particular, the branch set of this extension must also be closed and have dimension at most $n-2$. Now define $\Gamma_i$ to be the union of the interior branch set (in $H$ of $v^i$ along with any branch point on $\del H$ which arise in such a reflection process at points $z\in \del H\backslash \Gamma$. It is simple to see that $\overline{\Gamma}_i\subset \Gamma\cup\Gamma_i$; indeed, away from $\del H$ the interior branch set is closed, and so if $z\cap\overline{\Gamma}\cap \del H$, we either have $z\in \Gamma$ or if not then $z$ is still the limit of \textit{interior} branch points of a good multi-valued harmonic function with $z$ in the interior of its domain of definition.
	
	Therefore, whenever $K$ is convex (and $A$-weighted for some $A>0$; we can choose this dependent only on $n,k$), we are able to apply Lemma \ref{MSC:nonflat} (remembering that those points in $\del H\backslash(\Gamma\cup\Gamma_i)$ have a local good multi-valued harmonic function decomposition locally) to get that $v$ is $C^{1,\tilde{\alpha}}$ on $K$ for some $\tilde{\alpha} = \tilde{\alpha}(v,K,\FB_q(\Omega))$. Therefore choosing $K$ to be half-balls which exhaust $\overline{H}\backslash T_{i_*}(v))$, we deduce that $v\in C^1(\overline{H}\backslash T_{i_*}(v))$ (note that we are not able to deduce $C^{1,\alpha_*}$ regularity for some fixed $\alpha_*$ as we could have $\tilde{\alpha}\to 0$ for some sequence of domains $K$).
	
	Now consider $v^{i_*}$: this is translation invariant along $T_{i_*}(v)$, and so we may write (after possibility rotating) $v^{i_*}(x^1,\dotsc,x^n):= f(x^1,\dotsc,x^m)$ for some $f:H_m\to \A_{q_{i_*}}(\R^k)$ which is a good $q_{i_*}$-valued harmonic function; here $H_m:= \{(x^1,\dotsc,x^m)\in \R^m: x^1>0\}$, where $m := n - d_{i_*}(v)\geq 3$. From the above we know that $f\in C^1(\overline{H}_m\backslash\{0\})$ with $f|_{\del H_m} = q_{i_*}\llbracket 0\rrbracket$.
	
	We may now define an odd reflection $F:\R^m\to \A_{q_{i_*}}(\R^k)$ of $f$ by:
	$$F(x^1,\dotsc,x^m) := \begin{cases}
		\hspace{0.85em}f(x^1,x^2,\dotsc,x^m) & \text{if }x^1\geq 0;\\
		-f(-x^1,x^2,\dotsc,x^m) & \text{if }x^1<0.
	\end{cases}$$
	Now since we have $f\in C^1(\overline{H}_m\backslash\{0\})$, we see that $F\in C^1(\R^m\backslash\{0\})$. Moreover, $F$ is a $q_{i_*}$-valued harmonic function: this is clear on $\R^m\backslash\{x^1=0\}$, and on $\{x^1=0\}$, if we have a point $z\in \{x^1=0\}\backslash \CK_F$, where $\CK_F:= \{x:F^i(x) = F^j(x)\text{ and }DF^i(x) = DF^j(x)\text{ for some }i\neq j\}$, then locally about $x$ we necessarily have that $f$ will be given by $q_{i_*}$ single-valued harmonic functions, and so the usual reflection principle for single-valued harmonic functions ensures that $F$ is given by $q_{i_*}$ single-valued harmonic functions on the neighbourhood extended by the reflection.
	
	Thus, $F$ is a homogeneous degree one $C^1$, $q_{i_*}$-valued harmonic function on $\R^m\backslash\{0\}$ (note that we do not know whether $F$ is a \textit{good} $q_{i_*}$-valued harmonic function as it could have an $\H^{m-1}$-positive set of branch points on $\{x^1=0\}$). But the Classification Hypothesis enables us to conclude that $F$ must be linear. However, since $F|_{\{x^1=0\}} = q_{i_*}\llbracket 0\rrbracket$, we must have that $F$ takes the form $x\mapsto \sum_j\llbracket a_jx^1\rrbracket$ for some $a_j$; this implies that $v^{i_*}(x) = \sum_j \llbracket a_j x^1\rrbracket$, which means that $d_{i_*}(v) = n-1$; this provides the contradiction necessary to show that $\H_d = \emptyset$ unless $d = (n-1,\dotsc,n-1)$, completing the proof.
\end{proof}

\textbf{Remark:} It should be stressed that the crucial difference between the multi-valued case and the single-valued case is the ability to reflect. The reflection principle for single-valued harmonic functions requires no assumption on the boundary regularity of the derivative, whilst in the multi-valued setting we first need to establish suitable $C^1$ regularity at the boundary before we can reflect to get a $C^1$ function on the whole plane.

With the characterisation provided in Lemma \ref{HP2}, we can now prove Theorem \ref{thm:reg}.

\begin{proof}[Proof of Theorem \ref{thm:reg}]
	The proof essentially follows the same lines as that of Lemma \ref{HP2}, using the classification of homogeneous degree one elements in $\FB_q(\Omega)$ to establish a suitable reverse Hardt--Simon inequality for arbitrary $v\in \FB_q(\Omega)$.
	
	Indeed, we claim that there is $\epsilon = \epsilon(\FB_q(\Omega))$ such that the following holds: if $v\in \FB_q(\Omega)$, then for all $z\in B_1\cap\del H$ and $\rho\in (0,1/32]$ at least one of the following must hold:
	\begin{enumerate}
		\item [(a)] The conclusions of $(\FB7)$ hold on $B_{3/256}(z)$; in particular, $v$ is $C^{1,\alpha}(B_{3/256}(z)\cap\overline{H})$, and there is a linear function $\psi = (\psi^1,\dotsc,\psi^N)$ with $\psi^i:\overline{H}\to \A_{q_i}(\R^k)$ and $\psi|_{\del H} = 0$ such that for all $0<\rho\leq 3/256$:
		$$\rho^{-n-2}\int_{H\cap B_\rho(z)}\G(v - \ell_v,\psi)^2 \leq C\rho^{2\alpha}\cdot\int_{H\cap B_{1/32}(z)}|v-\ell_v|$$
		where $\ell_v = (\ell_v^1,\dotsc,\ell_v^N)$ ($\ell_v^N \equiv \ell_{v^N_a,z}$ is the first-order linear approximation to $v^N_a$ at $z$);
		\item [(b)] The reverse Hardt--Simon inequality holds:
		$$\int_{\Omega\cap B_\rho(z)\backslash B_{\rho/2}(z)}\sum_i R_z^{2-n}\left(\frac{\del((v^i-v^i_a(z))/R_z)}{\del R_z}\right)^{2} \geq\epsilon\rho^{-n-2}\int_{\Omega\cap B_\rho(z)}\sum^i|v^i-\ell_{v^i_a,z}|^2.$$
	\end{enumerate}
		Indeed, setting $w:= v_{z,\rho}$ and $\tilde{w}:= \|w-\ell_w\|^{-1}_{L^2(\Omega)}(w-\ell_w)$, we know that $w,\tilde{w}\in \FB_q(\Omega)$ by $(\FB5\text{I})$ and $(\FB5\text{II})$, respectively, and so it suffices to consider the case $\rho=1$, $z=0$, and $v\in \FB_q(\Omega)$ with $v_a(0) = 0$, $Dv_a(0) =0$, and $\|v\|_{L^2(\Omega)} =1$. If this were not true, then one could find a sequence $\epsilon_\ell\downarrow 0$ and $(v_\ell)_\ell$ of such $v$ for which both (a) and (b) fail with $\epsilon_\ell$ in place of $\epsilon$ and $v_\ell$ in place of $v$ for each $\ell$. In particular,
		$$\int_\Omega \sum_i R^{2-n}\left(\frac{\del (v^i/R)}{\del R}\right)^2 < \epsilon_\ell.$$
		By $(\FB6)$ we may pass to a subsequence to find a limit $v_\ell\to v_*\in \FB_q(\Omega)$; arguing the same way as in Lemma \ref{HP2}, we see that $v_*$ must be homogeneous of degree one, $v_*\not\equiv 0$, and $(v_*)_a(0) = 0$ and $D(v_*)_a(0) = 0$. But then Lemma \ref{HP2} gives that $v_*$ must be linear, at which point $(v_*)_a(0) = 0$ and $D(v_*)_a(0) = 0$ imply that $(v_*)_a\equiv 0$. Hence one may apply $(\FB7)$ for all $\ell$ sufficiently large to get that $v_\ell$ obeys (a), a contradiction.
		
		From the dichotomy (a) -- (b), one may then argue in the same way as Lemma \ref{HP2} to deduce the dichotomy (I) -- (II), which can then be used with Lemma \ref{MSC:nonflat} to deduce the result.
\end{proof}

\subsection{Modifications to Fine Blow-Up Classes}\label{sec:fine}

The rest of the paper will be devoted to discussing some modifications of the regularity theorem seen in Section \ref{sec:blow-up} to other setting which naturally arise in the regularity theory of stationary integral varifolds, but which do not quite fit the profile seen in Section \ref{sec:blow-up}. In Section \ref{sec:blow-up}, the regularity theory is for classes of functions arising in the simplest situation, namely for certain sequences of varifolds converging to a fixed stationary integral cone supported on half-planes meeting along an axis. In recent years, a different blow-up procedure known as a \textit{fine blow-up} has been used to analyse degenerate situations (see \cite{wickstable}, \cite{minter}). The difference in this situation is that the construction of a fine blow-up class depends on an additional parameter, $M\in (1,\infty)$, and the class of functions (which are always defined on half-balls) constructed through a fine blow-up procedure is not closed under the operations in $(\FB5)$ as this parameter $M$ could change. In the applications thus far however it can be shown that the functions in $(\FB5)$ belong to another fine blow-up class for which the parameter $M$ has only changed by a fixed constant. All other properties $(\FB1) - (\FB7)$ remain unchanged, and under these observations a close examination of the proof in Section \ref{sec:proof} reveals that the same proof will work, subject to the functions of interest perhaps switching to another, \textit{fixed} blow-up class. To be more precise, let us make the following definition; here, as usual we fix integers $q_1,\dotsc,q_N\in \{1,2,\dotsc\}$ and write $q = (q_1,\dotsc,q_N)$.

\begin{defn}
	We say that $(\FB^F_{q;\,M}(\Omega))_{M\in (1,\infty)}$ is a (\text{proper}) \textit{fine blow-up family} if for each $M\in (1,\infty)$, the class $\FB^F_{q;\,M}(\Omega)$ obeys all properties of a blow-up class seen in Definition \ref{def:blow-up}, with constant $C$ independent of $M$, except for $(\FB5)$, where instead there is some $M_* = M_*(M)$ such that if $v\in \FB^F_{q;\,M}(\Omega)$ and $v_*$ is one of the functions in $(\FB5)$ generated from $v$, we have $v_*\in \FB^{F}_{q;\,M_*(M)}(\Omega)$. We call each $\FB^F_{q;\,M}(\Omega)$ a \textit{fine blow-up class}.
\end{defn}

\textbf{Remark:} In \cite{wickstable} and \cite{minter}, one may find $M_0>1$ independent of $M$ for which $M_*(M):= M_0M$.

As noted above, one may follow the proof in Section \ref{sec:proof} to prove the following regularity theorem for functions in a fine blow-up class:

\begin{theorem}\label{thm:fine_reg}
	If $\FB^F_{q;\,M}(\Omega)$ is a fine blow-up class, then if $v\in \FB^F_{q;\,M}(\Omega)$ we have $v|_{B_{1/8}(0)}\in C^{1,\alpha}(\overline{B_{1/8}(0)\cap H})$ for some $\alpha = \alpha\in (0,1)$ which is independent of $M$, with the estimate
	$$\|v\|_{1,\alpha;B_{1/8}(0)\cap \overline{H}}\leq C\left(\int_{B_{1/2}(0)\cap H}|v|^2\right)^{1/2}$$
	where $C \in (0,\infty)$ is independent of $M$.
\end{theorem}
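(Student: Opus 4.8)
The plan is to observe that the proof of Theorem \ref{thm:reg} given in Section \ref{sec:proof} is a finite procedure which, applied to a function $v$ in a blow-up class, only ever invokes properties $(\FB1)$--$(\FB7)$ together with a \emph{uniformly bounded} number of compositions of the operations appearing in $(\FB5)$. For a fine blow-up class $\FB^F_{q;\,M}(\Omega)$ the only obstruction to running this procedure verbatim is that $(\FB5)$ moves us from parameter $M$ to parameter $M_*(M)$; but if the number of such moves is bounded independently of everything, then the whole argument takes place inside a fixed finite union of fine blow-up classes, and since by definition of a fine blow-up family all structural constants are uniform in $M$, the output constants $\alpha$ and $C$ inherit this uniformity.

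Concretely, I would first fix $M \in (1,\infty)$ and $v \in \FB^F_{q;\,M}(\Omega)$, set $M_*^{(0)} := M$ and $M_*^{(j+1)} := M_*(M_*^{(j)}) \in (1,\infty)$, and then go through the proof recording every use of $(\FB5\text{I})$ and $(\FB5\text{II})$. One finds: Lemma \ref{HP1} uses only $(\FB2)$, $(\FB3)$ and Lemma \ref{MSC:nonflat}, hence no $(\FB5)$ at all and so holds uniformly in $M$ at once; the reduction steps opening Lemma \ref{HP2} and the proof of Theorem \ref{thm:reg} compose $(\FB5\text{I})$ and $(\FB5\text{II})$ at most once each; the various contradiction/compactness arguments build one further rescaling $v_{z_\ell,\rho_\ell}$ via $(\FB5\text{I})$ before extracting a limit via $(\FB6)$; the maximality argument in Lemma \ref{HP2} is a single, non-recursive choice of the multi-index $d$; and the derivation of (\ref{E:lower_bound}), the Classification Hypothesis, Lemma \ref{UC}, Lemma \ref{MSC} and Lemma \ref{MSC:nonflat} use $(\FB5)$ not at all. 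This yields a universal integer $J$ (independent of $v$, $M$ and the family) for which every function occurring in the proof lies in
$$\mathcal{U}_M := \bigcup_{j=0}^{J} \FB^F_{q;\,M_*^{(j)}}(\Omega),$$
with $(\FB5)$ never applied to a function already ``at depth $J$''.

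Next I would note that each of the $J+1$ classes making up $\mathcal{U}_M$ satisfies $(\FB1)$--$(\FB4)$, $(\FB6)$, $(\FB7)$ with the common uniform-in-$M$ constants, while $(\FB5)$ maps $\FB^F_{q;\,M_*^{(j)}}(\Omega)$ into $\FB^F_{q;\,M_*^{(j+1)}}(\Omega) \subset \mathcal{U}_M$ for $j < J$. One may then replay the proof of Theorem \ref{thm:reg} verbatim with $\mathcal{U}_M$ in place of $\FB_q(\Omega)$: at each compactness step the relevant sequence of functions lies in $\mathcal{U}_M$, hence by the pigeonhole principle over the finite index set $\{0,\dots,J\}$ some subsequence lies in a single class $\FB^F_{q;\,M_*^{(j_0)}}(\Omega)$, to which $(\FB6)$ applies and returns a limit in that same class. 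Every constant generated along the way --- the Hölder exponent of Lemma \ref{HP1}, the threshold in the reverse Hardt--Simon dichotomy, the exponents fed into Lemma \ref{MSC:nonflat}, and finally $\alpha$ and $C$ --- then depends only on $C_1$, $\alpha_1$, $\epsilon$, $n$, $k$, $q$, $N$ and the fixed geometric data, hence not on $M$. This gives $v|_{B_{1/8}(0)} \in C^{1,\alpha}(\overline{B_{1/8}(0)\cap H})$ with $\alpha$ and the stated bound independent of $M$.

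The main obstacle I anticipate is precisely the bookkeeping in the second step: one must check carefully that the induction on $d(v)$ inside Lemma \ref{HP2} does \emph{not} iterate the $(\FB5)$-operations --- it selects a single $d$ maximising $\sum_i d_i$ and then argues by contradiction, rather than descending recursively --- for if it did, the parameters $M_*^{(j)}$ could in principle grow without a uniform bound, $\mathcal{U}_M$ would have to be replaced by an infinite union, and one would lose control of $(\FB6)$ (and hence of all the compactness extractions) for sequences with unbounded fine parameter. A secondary point is to confirm that the uniform lower bound (\ref{E:lower_bound}), itself proved by a compactness argument, can be re-established across $\mathcal{U}_M$ with a constant depending only on the uniform data.
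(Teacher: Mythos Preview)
Your proposal is correct and follows exactly the paper's (terse) approach: the proof of Theorem \ref{thm:reg} carries over verbatim, with functions switching to another fixed fine blow-up class under each application of $(\FB5)$ and all structural constants remaining uniform in $M$. The obstacle you flag --- the maximality step in Lemma \ref{HP2} --- is indeed non-iterative, since $\sum_i d_i \leq N(n-1)$ is a bounded integer, so the maximum may simply be taken over the entire fine blow-up family (rather than just your finite union $\mathcal{U}_M$), and the resulting $w_*$, which lies in \emph{some} class of the family, still contradicts that global maximum.
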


Once again, given appropriate regularity theorems for $C^{1,\alpha}$ multi-valued functions, one may prove that the boundary branch set of $v\in \FB^F_{q;\,M}(\Omega)$ is countably $(n-2)$-rectifiable.

\bibliographystyle{alpha} 
\bibliography{references}

\end{document}